\documentclass[11pt]{article}
\usepackage[margin=1.0in]{geometry}
\usepackage[utf8]{inputenc}
\usepackage{mathrsfs,amssymb,amsfonts,amsthm,amsmath,amsbsy}
\usepackage{mathtools}
\usepackage{mathabx}
\usepackage{graphicx}
\usepackage{wrapfig}
\usepackage{caption}
\usepackage{subfig}
\usepackage{dsfont}
\usepackage{float}
\usepackage{upgreek}
\usepackage{listings}
\usepackage{lipsum}
\usepackage[export]{adjustbox}
\usepackage[
backend=bibtex,
style=alphabetic,
citestyle=ieee-alphabetic,
maxnames=10
]{biblatex}
\addbibresource{biblio.bib}
\usepackage{titlesec}
\usepackage{hyperref}
\hypersetup{
	colorlinks,
	citecolor=blue,
	linkcolor=blue,
	urlcolor=green}

\newtheorem{theorem}{Theorem}[section]
\newtheorem{proposition}[theorem]{Proposition}

\newtheorem{lemma}[theorem]{Lemma}

\newtheorem{definition1}[theorem]{Definition}

\newtheorem{remark1}[theorem]{Remark}

\DeclarePairedDelimiter\floor{\lfloor}{\rfloor}

\usepackage[ruled,vlined]{algorithm2e}
\usepackage[font=small]{caption}

\usepackage[ruled,vlined]{algorithm2e}
\usepackage[font=small]{caption}
\usepackage[usenames,dvipsnames]{xcolor}
\usepackage{tikz}
\usepackage{subfig}
\usepackage{dsfont,fancyhdr}
\usetikzlibrary{matrix}
\usepackage{stmaryrd}

\usepackage{color}

\title{Quenched scaling limit for biased random walks on random, heavy tailed conductances: low dimensions}

\author{Umberto De Ambroggio \thanks{Department of Mathematics, National University of Singapore, Email: \texttt{umberto@nus.edu.sg}} \and Carlo Scali \thanks{School of Computation, Information and Technology, Technische Universit{\"a}t M{\"u}nchen, Email: \texttt{carlo.scali@tum.de}}}

\begin{document}

\maketitle

\begin{abstract}
We consider a random walk amongst positive random conductances on $\mathbb{Z}^d, d \ge 2$, with directional bias. When the conductances have a stable distribution with parameter $\gamma \in (0, 1)$, the walk is sub-ballistic. In this regime Fribergh and Kious \cite{Kious_Frib} derived an annealed scaling limit for the appropriately rescaled walk towards the Fractional Kinetics process. We prove the quenched version of this result for all $d \ge 2$.\\

\textbf{MSC2020:} Primary 60K37, 60F17; 
secondary
60K50, 
60G22
\\

\textbf{Keywords and phrases:} random conductance model, random walk in random environment, disordered media, quenched limit, trapping.
\end{abstract}

\section{Introduction}

Random walks in random environment (RWRE) are a classical model in probability theory, firstly rigorously analyzed by Solomon \cite{Solomon}. This seminal paper highlighted that these models showcase a behavior that is strongly atypical with respect to the one of classical random walks. Let us briefly go over some examples in which this atypical behaviour has been rigorously described, with a particular focus on models which undergo trapping.

Several works have focused their attention towards models in an anisotropic setting. In these works, particular attentions have been devoted to study cases in which the random walk exhibits slowdown due to the effect of trapping. Indeed, slowdown and sub-ballistic behavior has been studied on trees \cite{Frib_trees,aidekon_trees,Hammond_trees}, on percolation clusters \cite{BergerGantertPeres, Frib_perco} on random conductance \cite{Frib_conduc, Kious_Frib} and in several other instances \cite{Interlac, Sabot_Enriquez_Zindy, berger2020}. We also refer to the survey \cite{BA-Frib} for further reference to the literature.

Other important efforts were directed towards the study of anisotropic walks in i.i.d.\ environment. We refer to the celebrated series of papers \cite{Sznitman_Zerner, SznitmanClass, Szn_serie, SznitmanSlowdown, SznitmanEffective, Sznitman, Szn_invariance, Sz1, Sz2} in this field. 

A general focus of many RWRE works has been on establishing annealed and quenched invariance principles. We mention the early works \cite{deMasi1, deMasi2, Kipnis-Var} as well as more recent developments \cite{SidoSz, MP, Berger_invariance}. In this regard, a crucial role has been played by models in which the random walks are reversible, that is the random conductance model (RWRC).

In the case where trapping occurs, only few scaling limits have been proved. We mention the recent works on the random walk in i.i.d.\ Dirichlet environment \cite{Poudevigne, Perrel}. A very precise result for RWRC on $\mathbb{Z}^d, d \ge 2$ has been proved by \cite{Kious_Frib}, where an annealed scaling limit towards a Fractional Kinetics process was derived. In \cite{QuenchedBiasedRWRC}, the second author together with A.\ Fribergh and T.\ Lions, proved that the quenched limit holds for all $d \ge 5$. In light of the results of \cite{BarlowCerny, Cerny2d} for the anisotropic case, it is natural to expect quenched limits for all $d \ge 2$.

In the present work we confirm this and prove that the quenched version of the scaling limit of \cite{Kious_Frib} holds in all $d \ge 2$. To the best of our knowledge, this result is the first quenched scaling limit for a biased, sub-ballistic random walk in $d =2, 3, 4$.

\subsection{Definition of the model and main result}
Throughout, $d \geq 2$ is a fixed integer. Let us start by introducing the model of interest, i.e.\ the random walk amongst random conductances. Let $\mathbf{P}$ be a measure on the set of nearest neighbour edges of the square lattice $E(\mathbb{Z}^d)$, i.e.\ $e \in E(\mathbb{Z}^d)$ if $e = [x, y]$ for $x \sim y$ (adjacent vertices in $\mathbb{Z}^d$). In particular, let $\mathbf{P}(\cdot) = \mu^{\otimes E(\mathbb{Z}^d)}$, where $\mu$ is a probability measure on $(0,+\infty)$. For $e \in E(\mathbb{Z}^d)$, we denote by $c_{*}(e)$ the random variable associated to the conductance of the edge $e$ under the measure $\mathbf{P}$, that is $\mathbf{P}(c_{*}(e) \in A) = \mu(A)$ for each Borel set $A$. 

Let us denote the space of all environments by $\Omega$. Let $\omega \in \Omega$ be a fixed realization of the environment. We assume throughout the article that 
\begin{equation*}
    \mathbf{P}\left(c_{*}(e) \ge u \right) = \mu([u,+\infty)) \coloneqq L(u) u^{-\gamma} \quad \textnormal{for any }u \ge 0,
\end{equation*}
with $\gamma \in (0,1)$ a fixed model parameter and where $L(\cdot)$ is a slowly-varying function. We recall that a function $L \colon \mathbb{R_+} \to \mathbb{R}_+$ is slowly varying at infinity if, for any $a \in \mathbb{R}_+$, we have that $\lim_{t \to \infty} L(at)/L(t) = 1$.

We denote by $\{c^{\omega}_{*}(e)\}_{e \in E(\mathbb{Z}^d)}$ the family of conductances in the fixed environment $\omega\in \Omega$. To introduce the drift, consider a unit vector $\vec{\ell} \in \mathbb{S}^{d-1}$ and $\lambda > 0$, let $\ell \coloneqq \lambda \vec{\ell}$ and set, for $e = [x,y] \in E(\mathbb{Z}^{d})$,
\begin{equation*}
    c^{\omega}(e) = c^{\omega}_{*}(e)\exp((x+y)\cdot  \ell).
\end{equation*}
See Figure~\ref{fig:FigJump} below for an intuitive description of the effect of the bias.
\begin{figure}[H]
    \centering
    \includegraphics[width=0.15\linewidth]{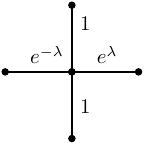}
    \caption{The conductances induced at the origin by the bias when $c^{\omega}_{*}(e) \equiv 1$ for all $e \in E(\mathbb{Z}^2)$ and $\vec{\ell} = e_1$. Any point has the same behaviour as the one drawn above from the point of view of the walk.}
    \label{fig:FigJump}
\end{figure}
We consider the discrete time random walk $(X_n)_{n \geq 0}$ on $\mathbb{Z}^d$ with conductances $(c^\omega(e))_{e \in E(\mathbb{Z}^d)}$. We call $P_{x}^{\omega}$ the quenched law of the random walk on the environment $\omega$, starting from $x \in \mathbb{Z}^d$ and with transition probabilities, for $z \sim y \in \mathbb{Z}^d$,
\begin{equation}\label{QuenchedLawDef}
	p^\omega(z, y) = \frac{c^\omega([z, y])}{\sum\limits_{w \sim z} c^\omega([z, w])},
\end{equation}
and $p^\omega(z, y) = 0$ whenever $z \nsim y$. Let $\mathbb{P}_{x} = \mathbf{E}[P_{x}^{\omega}(\cdot)]$ be the annealed law of the random walk $(X_n)_{n}$ ($\mathbf{E}[\cdot]$ is the expectation with respect to $\mathbf{P}(\cdot)$ and $\mathbf{Var}(\cdot)$ is the corresponding variance). For any $T>0$, let $D^d([0, T])$ denote the space of c\`{a}dl\`{a}g (continuous from the right with left limits) functions from $[0, T]$ to $\mathbb{R}^d$. The main theorem of this article is the following.

\begin{theorem}\label{MainTheorem}
Let $d \ge 2$ and fix any $T > 0$. The following statements hold for $\mathbf{P}$-almost every environment $\omega \in \Omega$. There exists a deterministic $v_0 \in \mathbb{S}^{d-1}$ with $v_0 \cdot \vec{\ell} > 0$ such that, under the quenched law $P^\omega_0(\cdot)$, it holds that
	\begin{equation}\label{FirstStatementMain}
		\left( \frac{X_{\floor{n t}}}{n^\gamma/L(n)} \right)_{0 \le t \le T} \overset{(\mathrm{d})}{\longrightarrow} \left( v_0 C_\infty^{-\gamma} \mathcal{S}_\gamma^{-1}(t) \right)_{0 \le t \le T}
	\end{equation}
 and, furthermore,	\begin{equation}\label{SecondStatementMain}
		\left( \frac{X_{\floor{n t}} - (X_{\floor{n t}} \cdot v_0) v_0}{\sqrt{n^\gamma/L(n)}} \right)_{0 \le t \le T} \overset{(\mathrm{d})}{\longrightarrow}\left( M_d B_{\mathcal{S}_\gamma^{-1}(t)} \right)_{0 \le t \le T},
	\end{equation}
	for a deterministic constant $C_\infty >0$, a deterministic $d \times d$ matrix $M_d$ of rank $d-1$, a standard Brownian motion $B$ and a stable subordinator $\mathcal{S}_\gamma$ of index $\gamma$, independent of $B$. The first convergence holds in the uniform topology on $D^d([0, T])$ while the second holds in the Skorohod's $J_1$-topology on $D^d([0, T])$.
\end{theorem}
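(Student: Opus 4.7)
The plan is to combine the regeneration-plus-clock-process decomposition used in \cite{Kious_Frib} for the annealed limit with a quenched upgrade of the clock based on the point process of trap depths, in the spirit of the low-dimensional Bouchaud-trap analyses of \cite{BarlowCerny, Cerny2d}. First I would use the biased-RWRC regeneration structure to write the trajectory as a sum of i.i.d.\ pieces $(\tau_k-\tau_{k-1},\, X_{\tau_k}-X_{\tau_{k-1}})$ under $\mathbb{P}_0$. The bulk of each increment $\tau_k-\tau_{k-1}$ is the time spent oscillating across edges $e$ whose conductance $c^\omega_*(e)$ is anomalously large: this trap time has a tail of the form $u^{-\gamma}L(u)$ inherited from $\mu$, so the clock process normalized by $n^\gamma/L(n)$ converges under $\mathbb{P}_0$ to a $\gamma$-stable subordinator $\mathcal{S}_\gamma$. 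Since the regeneration displacement has a deterministic drift along $v_0$ and finite transverse variance, inverting the clock yields the annealed analogues of \eqref{FirstStatementMain} and \eqref{SecondStatementMain}.

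For the quenched upgrade I would reformulate the clock at time $n$ as a functional of the rescaled trap depths seen along the drift axis, each weighted by the random number of visits of the walk to that trap. Two ingredients would then suffice: (i) $\mathbf{P}$-a.s.\ convergence of the empirical point process of large conductances inside the "ballistic slab" $\{x : x \cdot v_0 \in [0,Cn^\gamma]\}$ to a Poisson point process on $\mathbb{R}\times(0,\infty)$ with intensity of the form $\gamma u^{-\gamma-1}\,du$, which follows from ergodicity of $\mathbf{P}$ together with the regular variation hypothesis; and (ii) an environment-conditioned $L^2$ bound showing that the total trap time concentrates around its quenched mean, i.e.\ that visit counts to distinct traps are essentially decorrelated.

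The main obstacle is ingredient (ii) in dimensions $d=2,3,4$. For $d\ge 5$ the authors of \cite{QuenchedBiasedRWRC} could rely on strong transience of the transverse projection of the biased walk to argue that distinct traps are visited almost independently, closing a direct second-moment argument. In $d\le 4$ the transverse Green's function is not summable (logarithmically divergent when $d=2$, polynomially for $d=3,4$), so the naive variance bound diverges. I would adapt the multi-scale strategy of \cite{Cerny2d}: partition trap depths into dyadic layers, absorb the mean contribution of each shallow layer into a deterministic drift that vanishes after the affine compensation built into the subordinator limit, and control the variance layer by layer using the hitting-time and mixing estimates for the biased walk already developed in \cite{Kious_Frib}. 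Once the clock process converges $\mathbf{P}$-a.s., \eqref{FirstStatementMain} follows by inversion, and \eqref{SecondStatementMain} follows by composing a quenched invariance principle for the transverse component (available in the spirit of \cite{Berger_invariance, SidoSz, MP}) with the quenched inverse clock.
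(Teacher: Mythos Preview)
Your proposal follows a genuinely different route from the paper. The paper does \emph{not} use the point-process/Poisson convergence framework of \cite{BarlowCerny, Cerny2d}. Instead it runs the Bolthausen--Sznitman/Mourrat concentration strategy: for suitable Lipschitz functionals $F$ of the joint process $W_n=(Z_n,S_n)$ one shows that $\mathbf{Var}\big(E_0^\omega[F(W^*_{b^n})]\big)$ is summable in $n$ (Theorem~\ref{TheoremVarianceClockProcess}) and deduces the quenched limit from the annealed one by Borel--Cantelli. Expanding the variance brings in two copies $X^{(1)},X^{(2)}$ of the walk in the \emph{same} environment, and the problem becomes controlling how much their trajectories interact. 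The paper's two main technical contributions are: (a) a bound $\mathbb{E}[I_n]\le n^{1/2+\varepsilon}$ on the expected number of intersections of the two trajectories (Section~\ref{sect:Intersections}), obtained by comparing with walks in independent environments on a separation event; and (b) a decoupling argument (Section~\ref{sect:Cross}) showing that the event ``the two walks cross in the $j$-th regeneration block'' is essentially independent of the trap depth seen there, because crossing is decided by the positions at joint regeneration levels, which carry no information about the local conductances inside the next slab. This decoupling is exactly what handles the heavy tails: the per-block contribution to the clock variance is $(\tilde\tau_j/n^{1/\gamma})\wedge 1$, and after decoupling one only pays this on the $O(n^{1/2+\varepsilon})$ crossing blocks.

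Your route via quenched Poisson convergence of trap depths plus a multi-scale $L^2$ estimate is plausible in spirit but has a real gap at your ingredient (ii). The argument in \cite{Cerny2d} is tailored to a symmetric walk whose skeleton is a simple random walk exploring $\mathbb{Z}^2$ diffusively, with precise heat-kernel and Green's-function control; here the skeleton of regeneration points is ballistic with diffusive transverse fluctuations, traps sit on edges, and the visit-count statistics are not governed by the same Green's function. Saying the layer-wise variance is controlled by ``the hitting-time and mixing estimates already developed in \cite{Kious_Frib}'' does not close the argument: those estimates are annealed, and the \emph{quenched} decorrelation of visit counts in $d\le 4$ is precisely the hard part. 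Your final step also treats a quenched invariance principle for the transverse component as an off-the-shelf input, but none of \cite{Berger_invariance, SidoSz, MP} cover the biased, heavy-tailed conductance model; in the paper this is not an input but is established as part of Theorem~\ref{TheoremVarianceClockProcess} via \eqref{EquationBoundVarianceTraj}.
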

\begin{remark1}
    We remark that the result for $d\geq 5$ was established by the second author with A.\ Fribergh and T.\ Lions in \cite{QuenchedBiasedRWRC}; here we derive the result for the case $2\leq d\leq 4$. We will explain the main difficulty to overcome in order to show the result for lower dimensions in the next subsection. 
\end{remark1}

\subsection{Strategy of the proof}

Bolthausen and Sznitman introduced in \cite{Szn_serie} a strategy to prove a functional quenched scaling limit towards Brownian motion starting from an annealed statement, using concentration estimates on certain functionals of the random walk. Later Mourrat \cite{Mourrat} extended it also to stable subordinators. 

The strategy relies on the following observation. Let $F$ be a chosen functional and let $X^{(1)}$ and $X^{(2)}$ be two copies of the random walk $X$. The key idea is that
\begin{equation*}
    \begin{split}
        E&_0^\omega\left[ F(X_n) \right]^2 = E_0^\omega\left[ F(X^{(1)}_n) \right] E_0^\omega\left[ F(X^{(2)}_n) \right], \quad \text{ and } \\
        \mathbf{E}&\left[ E_0^\omega\left[ F(X_n) \right] \right]^2 = \mathbf{E}\left[ E_0^\omega\left[ F(X^{(1)}_n) \right] \right] \mathbf{E}\left[E_0^\omega\left[ F(X^{(2)}_n) \right] \right].
    \end{split}
\end{equation*}
Note that in the first line the two random walks evolve independently in the same environment while in the second line they evolve in two independent copies of the environment. Then
\begin{equation*}
    \begin{split}
        \mathbf{Var}\left(E_0^\omega\left[ F(X_n) \right]\right) 
        &= \mathbf{E}\left[ E_0^\omega\left[ F(X_n) \right]^2 \right] - \mathbf{E}\left[ E_0^\omega\left[ F(X_n) \right] \right]^2\\
        & = \mathbf{E}\left[ E_0^\omega\left[ F(X^{(1)}_n) \right] E_0^\omega\left[ F(X^{(2)}_n) \right] \right] - \mathbf{E}\left[ E_0^\omega\left[ F(X^{(1)}_n) \right] \right] \mathbf{E}\left[E_0^\omega\left[ F(X^{(2)}_n) \right] \right].
    \end{split}
\end{equation*}
The goal then becomes showing that
\begin{equation*}
    \mathbf{E}\left[ E_0^\omega\left[ F(X^{(1)}_n) \right] E_0^\omega\left[ F(X^{(2)}_n) \right] \right] = \mathbf{E}\left[ E_0^\omega\left[ F(X^{(1)}_n) \right] \right] \mathbf{E}\left[E_0^\omega\left[ F(X^{(2)}_n) \right] \right] + R_n,
\end{equation*}
where $R_n$ is some `error term' that needs to be small enough (i.e.\ polynomially small in $n$). 
This means that the variance is small whenever $R_n$ is small, that is, one can approximate the behavior of two random walks evolving in the same environment with the behavior of two random walks evolving in two independent environments.

Typically, the estimates on the variance of the chosen functionals of the random walk are derived by showing that two random walks in the same environment intersect (in the sense that their whole trajectories intersect) only finitely many times in high dimensions. This was the strategy followed in \cite{QuenchedBiasedRWRC}. 

In the ballistic case, \cite{RassoulAghaSeppalainen, Berger_Zeitouni} and \cite{BouchetSabotdosSantos} extended the strategy to lower dimensions by showing that, when the limit is a Brownian motion, it is sufficient to prove that two walks intersect rarely enough. Recently, in \cite{Diffusive}, the second author showed that, under certain conditions on the way the walk explores the space, the second moment of the regeneration times is sufficient for obtaining a quenched limit.


The problem with adapting this strategy to the present case is related to the well-known fact that the maximum of stable random variables is of the same order as the sum. 

To overcome this difficulty we will make the following heuristic rigorous: the crossing of two trajectories is a ``large scale'' event that depends mostly on how (biased) random walks explore the underlying space, while the large increments of regeneration times is due to the presence of isolated edges of very high conductances that trap the walk for a long time. Hence, the latter is a very local event. Thus, we can try to decorrelate the event that two random walks cross each other with the conductances on which the intersection occurs. If this holds, then we can show the main result by showing that two random walks intersect rarely enough.

The core of the proof is contained in Sections~\ref{sect:Intersections}, \ref{sect:Joint} and~\ref{sect:Cross}. In Section~\ref{sect:Intersections} we control the number of times the trajectories of two random walks intersect. In Section~\ref{sect:Joint} we bound the variance of the quenched expectation of the functional of interest by the increment of the regeneration times associated to crossing events. This statement somewhat weakens the usual end-product of similar arguments (see e.g.\ \cite{Berger_Zeitouni, RassoulAghaSeppalainen}) which is purely in terms of the number of intersections. However, coupled with the decorrelation step, this allows us to reduce the impact of each intersection to the variance term. 

In Section~\ref{sect:Cross} we perform the decoupling step. This exploits the joint regeneration structure introduced in \cite{QuenchedBiasedRWRC}. Heuristically, a joint regeneration level is a level $L>0$ in the direction $\vec{\ell}$ such that, when $X^{(1)}$ and $X^{(2)}$ enter the half space $\{z \in \mathbb{Z}^d \colon z \cdot \vec{\ell} \ge L\}$, then they both regenerate. Since the dependence of the environment is only short range, one can show that this structure is Markovian. This means that the future of the walks depends only on their respective starting points. Note that for the two walks to intersect close to a joint regeneration level, the two starting points must be ``close enough''. This observation allows us to decouple the intersection event with the increment of the regeneration times where the crossing happens.

\subsection{Problem simplification} \label{SectionIntroSeries}

Let us state the simplification of the problem based on \cite{Szn_serie, Mourrat} and also used in \cite{QuenchedBiasedRWRC}. In \cite{Kious_Frib} one can find the definition of a regeneration structure with suitable independence properties, more precisely let $\{\tau_k\}_{k \ge 0}$ be the regeneration times (we will summarise their properties in Definition~\ref{def:RegTimesWellDef}). Let $v \coloneqq \mathbb{E}[X_{\tau_2} - X_{\tau_1} ]$ and fix $T>0$. For $t \in [0, T]$, define
\begin{equation}\label{EqnThreeMainQuantities}
	Y_n(t) \coloneqq \frac{X_{\tau_{\floor{tn}}}}{n}, \quad Z_n(t) \coloneqq \frac{X_{\tau_{\floor{tn}}} - v n t}{n^{1/2}} , \quad S_n(t) \coloneqq \frac{\tau_{\floor{tn}}}{\mathrm{Inv}(n)}, \quad \textnormal{and} \quad W_n(t) \coloneqq \left(Z_n(t),S_n(t)\right),
\end{equation}
where $\mathrm{Inv}(u) \coloneqq \inf \left\{ s: \mathbf{P}[c_{*} > s]\le 1/u \right\}$. We remark that $\mathrm{Inv}(n) = L(n)n^{1/\gamma} \approx n^{1/\gamma}$ (where we use the notation $\approx$ for an undefined approximation).

Colloquially, we will often refer to the process $Z_n(t)$ as the \emph{position} and to the process $S_n(t)$ as the \emph{clock}.

Firstly, we will prove that for $\mathbf{P}$-almost all $\omega \in \Omega$, the first three quantities converge in distribution. Subsequently, we will use this fact to show the convergence of $W_n$ in \eqref{EqnThreeMainQuantities}, achieving a joint statement; Theorem~\ref{MainTheorem} will be deduced from these statements. The process $Y_n(t)$ converges almost surely under $\mathbb{P}_0$ to a deterministic limit by \cite[Lemma 11.2]{Kious_Frib} and this gives us already quenched convergence for this quantity. For technical purposes, let us define the process
\begin{equation}\label{ShiftedClock}
	W^*_n(t) \coloneqq W_n\left(t+\tfrac{1}{n}\right) - W_n\left(\tfrac{1}{n}\right).
\end{equation} 
We are now able to state the conditions introduced by \cite{Szn_serie} and \cite{Mourrat} in the following theorem.

\begin{theorem}\label{TheoremVarianceClockProcess}
\noindent Let $d \ge 2$ and fix $T>0$. For any positive and bounded Lipschitz function $F_1 \colon (\mathbb{R}^{d+1})^{m} \rightarrow \mathbb{R}$ and $0 \le t_1 \le \cdots \le t_m \le T$ it holds that
	\begin{equation} \label{EquationBoundVarianceClock}
		\sum_{n = 1 }^{+\infty}\mathbf{Var}\left(E_{0}^{\omega}\left[F_1(W^{*}_{b^n}(t_1),\cdots,W^{*}_{b^n}(t_m))\right]\right)< +\infty,
	\end{equation}
	with $b \in (1,2)$. Moreover, denoting by $Z_n$ the polygonal interpolation of the quantity denoted with the same letter in \eqref{EqnThreeMainQuantities}, we have
	\begin{equation} \label{EquationBoundVarianceTraj}
		\sum_{n = 1 }^{+\infty}\mathbf{Var}\left(E_{0}^{\omega}[F_2(Z_{b^n})]\right)< +\infty,
	\end{equation}
	where $F_2$ is any bounded Lipschitz function from $\mathcal{C}^d([0, T]) \to \mathbb{R}$.
\end{theorem}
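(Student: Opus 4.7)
The plan is to follow the Bolthausen--Sznitman--Mourrat scheme sketched in the introduction, applied to $n = b^k$ so that Borel--Cantelli-type summability follows from polynomial decay of each variance. Writing $X^{(1)}, X^{(2)}$ for two independent copies of the walk, the starting identity is
\begin{equation*}
\mathbf{Var}\bigl(E_0^\omega[F(X)]\bigr) = \mathbf{E}\bigl[E_0^\omega[F(X^{(1)})]\,E_0^\omega[F(X^{(2)})]\bigr] - \mathbf{E}\bigl[E_0^\omega[F(X^{(1)})]\bigr]\,\mathbf{E}\bigl[E_0^\omega[F(X^{(2)})]\bigr],
\end{equation*}
so that one has to control the decoupling error between two walks in the \emph{same} environment and two walks in \emph{independent} environments. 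Both bounds \eqref{EquationBoundVarianceClock} and \eqref{EquationBoundVarianceTraj} will be reduced to the same kind of estimate by noting that $F_1$ and $F_2$ are bounded and Lipschitz, so that after conditioning on suitable regeneration levels one can replace the functionals by the corresponding coordinate increments up to an error controlled by the regeneration time and displacement between consecutive joint regenerations.

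The main tool is the joint regeneration structure from \cite{QuenchedBiasedRWRC}: a sequence of levels $\{L_k\}_k$ in the direction $\vec\ell$ at which \emph{both} walks regenerate. Between such levels, $X^{(1)}$ and $X^{(2)}$ explore disjoint slabs, and because the environment has only short-range (in fact i.i.d.) dependence, the conductances they see are independent unless the two walks actually visit a common vertex inside the slab. Calling such a slab a \textbf{crossing block}, the difference of expectations above telescopes over joint regeneration blocks and is bounded by the quenched expectation of the sum, over the crossing blocks, of the associated regeneration-time (for $F_1$) or displacement (for $F_2$) increments, multiplied by the Lipschitz constants of $F_1$, $F_2$. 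This is exactly the statement that will be proved in Section~\ref{sect:Joint}.

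The main obstacle, and the reason the argument of \cite{QuenchedBiasedRWRC} does not extend directly to $d\in\{2,3,4\}$, is that the regeneration-time increment has stable tail of index $\gamma < 1$: its first moment is infinite, so one cannot afford to pay one regeneration-time increment per crossing block as one would in the ballistic case. The fix, implemented in Section~\ref{sect:Cross}, is to decouple the event of an intersection within a joint regeneration block from the local conductances that produce the trap responsible for a large regeneration-time increment. Intuitively, the intersection is a global, geometric event driven by the exploration of the two biased walks, while a large regeneration-time increment is a very local event produced by an isolated edge of atypically high conductance; conditioning on the intersection therefore only mildly biases the conductance distribution near the intersection site, and the heavy tail can be truncated at a polynomial threshold depending on $n$. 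After this decoupling the variance is bounded by the expected number of crossing blocks, times a manageable (polynomially small) tail contribution.

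With this reduction in hand the proof is closed by the intersection estimates from Section~\ref{sect:Intersections}: one shows that the expected number of joint regeneration blocks in $[0, \tau_n]$ visited by both walks is $O(n^{1-\alpha})$ for some $\alpha > 0$ in every $d \ge 2$. Combined with the decoupling bound this gives $\mathbf{Var}(E_0^\omega[F_i(\cdot)]) = O(n^{-\alpha'})$ for some $\alpha' > 0$, and summation along $n = b^k$ yields a convergent geometric series, proving both \eqref{EquationBoundVarianceClock} and \eqref{EquationBoundVarianceTraj}. The truly delicate part, and where all the work lies, is the decoupling of Section~\ref{sect:Cross}; the intersection count and the joint-regeneration variance reduction are comparatively standard adaptations of known techniques to the present sub-ballistic setting.
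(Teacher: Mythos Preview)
Your high-level architecture is right and matches the paper: reduce the variance to a quantity involving regeneration-time increments at intersections, decouple the intersection event from the large-conductance trap (Section~\ref{sect:Cross}), and feed in the intersection count (Section~\ref{sect:Intersections}). However, the \emph{mechanism} you describe for the variance reduction is not the one the paper uses. You say the difference of expectations ``telescopes over joint regeneration blocks''; the paper instead runs an Efron--Stein / martingale-difference decomposition over the \emph{vertices} of a large box. It orders the vertices $z\in\mathcal{B}(c_1 n,(c_1 n)^\alpha)$, sets $\mathcal{G}_z=\sigma(c_*(e):e\sim x,\ \mathrm{ord}(x)\le\mathrm{ord}(z))$, writes
\[
\mathbf{Var}\bigl(E_0^\omega[F]\bigr)=\sum_z \mathbf{E}\bigl[(\Delta_z^n)^2\bigr],\qquad \Delta_z^n=\mathbf{E}\bigl[E_0^\omega[F]\,\big|\,\mathcal{G}_z\bigr]-\mathbf{E}\bigl[E_0^\omega[F]\,\big|\,\mathcal{G}_{p(z)}\bigr],
\]
and bounds each $\Delta_z^n$ by surgically erasing the portion of the path between $\tau_z^-$ (last candidate regeneration before the walk reaches the level of $z$) and $\tau_z^+$ (first regeneration after). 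Only after this pointwise bound does the two-walk picture re-enter, via $\mathbf{E}[E_0^\omega[\mathds{1}_{\{X\text{ visits }\mathcal{V}_z\}}]^2]=\mathbb{P}_{0,0}(X^{(1)},X^{(2)}\text{ both visit }\mathcal{V}_z)$; the joint regeneration levels are used only later, in Section~\ref{sect:Cross}, to decouple $\{j\in J^{(1)}(n)\}$ from $LT_j(n^{3/(4\gamma)})$. Your block-by-block Lindeberg-type comparison is a plausible alternative (closer in spirit to \cite{QuenchedBiasedRWRC} in $d\ge 5$ or to \cite{Berger_Zeitouni,RassoulAghaSeppalainen}), but note that the variance is quadratic, so a naive telescoping does not immediately yield a first-moment bound; you would still have to control cross-block terms. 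The paper's martingale route buys orthogonality of increments for free, at the cost of the $\tau_z^-/\tau_z^+$ path surgery; both routes land on essentially the same endpoint~\eqref{eqn:FinalBoundClockVar}.
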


We remark that both \eqref{EquationBoundVarianceClock} and \eqref{EquationBoundVarianceTraj} are required (in particular \eqref{EquationBoundVarianceClock} is not sufficient) to obtain tightness in the desired functional space.

\paragraph{Structure of the paper.} In Section~\ref{sect:2} we introduce the notation and recall the properties of regeneration times and regeneration levels. In Section~\ref{sect:Intersections}, we prove that the expected number of intersections in the first $n$ regeneration times is upper bounded by $n^{-1/2+\varepsilon}$. In Section~\ref{sect:MartDiffPos}, we build the martingale difference argument for the process $Z_n$. In Section~\ref{sect:Joint} we improve the martingale difference in order to be able to deal with the process $W_n$. In Section~\ref{sect:Cross} we decouple the intersection event and the associated increment of the regeneration time. In the final section, we use well-known arguments to deduce the main theorem from Theorem~\ref{TheoremVarianceClockProcess}. We will collect in the appendix some useful (but fairly basic) facts about sums of random variables.

\section{Preliminaries}\label{sect:2}
In this section we establish the notation used throughout the article and we recall some key definitions and results from earlier works.
\subsection{Notation}

\paragraph{Basic definitions.} We let $e_1, \dots, e_d$ be the canonical basis of $\mathbb{R}^d$ chosen such that $e_1 \cdot \vec{\ell}\ge e_2 \cdot \vec{\ell} \dots \ge e_d\cdot \vec{\ell} \ge 0$. Furthermore, we fix the notation $e_{d+1}, \dots, e_{2d} = -e_{1}, \dots,-e_{d}$. Let $f_1, \dots, f_d$ be a orthonormal basis of $\mathbb{R}^d$ such that $f_1 = \vec{\ell}$. 

For $x, y \in \mathbb{Z}^d$ we write $x \sim y$ if the two sites are neighbours, furthermore, for a vertex $x \in \mathbb{Z}^d$ and an edge $[e^+, e^{-}] \in E(\mathbb{Z^d})$ we write $x \sim e$ if either $x \sim e^+$ or $x \sim e^-$. For any $x \in \mathbb{Z}^d$ we define 
\begin{equation*}
    \mathcal{V}_{x} \coloneqq \{ x \in \mathbb{Z}^d \colon \|x - z\|_1 \le 1 \} \quad \text{ and } \mathcal{E}_{x} \coloneqq \{[x, y]\colon y \sim x\};
\end{equation*}
those are the vertex and edge neighborhood of a site $x$ (note that $x \in \mathcal{V}_{x}$). For the direction $\vec{\ell}$, and for any $L>0$ we define the \textit{discretised half spaces} as
\begin{equation*}
    \mathcal{H}^+(L)  \coloneqq \{z \in \mathbb{Z}^d \colon x \cdot \vec{\ell} > L \} \quad \text{ and } \quad \mathcal{H}^-(L)  \coloneqq \{z \in \mathbb{Z}^d \colon x \cdot \vec{\ell} \le L \}.
\end{equation*}
Moreover, for any $x \in \mathbb{Z}^d$ we fix $\mathcal{H}^+(x) \coloneqq \mathcal{H}^+(x \cdot \vec{\ell})$ and $\mathcal{H}^-(x) \coloneqq \mathcal{H}^-(x \cdot \vec{\ell})$. We also define the half-planes of edge as 
\begin{align*}
    \mathcal{L}^x &\coloneqq \left\{ [z, y] \colon y \in \mathcal{H}^-(x) \text{ and } z \in \mathcal{H}^-(x) \right\} \cup\mathcal{E}_{x} \\
    \mathcal{R}^x &\coloneqq \left\{ [z, y] \colon y \in \mathcal{H}^+(x) \text{ or } z \in \mathcal{H}^+(x) \right\}\cup\mathcal{E}_{x}.
\end{align*}
For any subset $V \subset \mathbb{Z}^d$ we define $E(V) \coloneqq \{[x, y] \colon x, y \in V\}$, its (outer) vertex-boundary $\partial V \coloneqq \{x \not \in V \colon \exists \, y \in V , x \sim y\}$ and its edge-boundary $\partial_{E} V \coloneqq \{[x, y] \colon x \not \in V, y \in V\}$. We define the directionally tilted boxes of side-lengths $L, L'>0$ and centred at $y \in \mathbb{Z}^d$ 
\begin{equation*}
    \mathcal{B}_{y}(L, L') \coloneqq \left\{x \in \mathbb{Z}^d \colon |(x- y) \cdot \vec{\ell}| \le L, |(x- y) \cdot f_j| \le L' \text{ for all }j = 2, \dots, d\right\}.
\end{equation*}
For a random walk $(X_n)_{n \ge 0}$ on a graph $(V, E)$, we define for any $B \subset V$ the hitting times of $B$:
\begin{equation*}
    T_{B} \coloneqq \inf\{n \ge 0 \colon X_n \in B\}, \quad \text{ and }\quad T^+_{B} \coloneqq \inf\{n \ge 1 \colon X_n \in B\}.
\end{equation*}
We also define the exit time $T^{\mathrm{ex}}_{B} \coloneqq \inf\{n \ge 0 \colon X_n \not\in B\}$. For $R \in \mathbb{R}$ we will write $T_{R} = T_{\mathcal{H}^+(R)}$. We denote by $\theta_n$ the canonical time shift by $n$ units of times. In the context of two random walks, $\theta_{s, t}$ will denote the time shift of the first walk by $s$ unit of times and the second one by $t$.

\paragraph{Probability measures.} We recall that $P_x^\omega$ is the quenched law and $\mathbb{P}_x$ is the annealed law of the random walk started from a vertex $x \in \mathbb{Z}^d$. We will consider two random walks $X^{(1)}$ and $X^{(2)}$; the superscript $(i), i=1,2$ signals that the quantity of interest is related to the $i$-th walk, e.g.\ $\tau^{(1)}_1$ denotes the first regeneration time of the first walk and so on. Most of the time, we will avoid the double superscript and write $X^{(1)}_{\tau_1}$ for $X^{(1)}_{\tau^{(1)}_1}$ (similarly for all other walk-dependent quantities). Given $x,y \in \mathbb{Z}^d$, we denote 
\begin{equation*}
    P^{\omega}_{x, y}\left(X^{(1)} \in A_1, X^{(2)} \in A_2 \right) \coloneqq P^{\omega}_{x}\left(X^{(1)} \in A_1 \right) P^{\omega}_{y}\left(X^{(2)} \in A_2 \right),
\end{equation*}
for any pair of events $A_1, A_2$. Consequently, we define
\begin{equation*}
    \mathbb{P}_{x, y} \left(X^{(1)} \in A_1, X^{(2)} \in A_2 \right) \coloneqq \mathbf{E}\left[P^{\omega}_{x, y}\left(X^{(1)} \in A_1, X^{(2)} \in A_2 \right)\right].
\end{equation*}
The expectations with respect to the last two measures are denoted in the obvious way $E^\omega_{x, y}, \mathbb{E}_{x, y}$. We will also be interested in defining two walks evolving in independent environments, the corresponding measure obeys
\begin{equation*}
    Q_{x, y} \left(X^{(1)} \in A_1, X^{(2)} \in A_2 \right) \coloneqq \mathbb{P}_{x} \left(X^{(1)} \in A_1 \right) \mathbb{P}_{y} \left(X^{(2)} \in A_2 \right).
\end{equation*}
The expectation w.r.t.\ this measure is denoted $\mathbb{E}^{Q}_{x, y}$. For $K \in \mathbb{R}_+$ we will write $\mathbb{P}_{x}^K$ for the annealed law where the conductances adjacent to the starting point are set deterministically to the value $K$. We also set $Q_{x, y}^K(\cdot) = \mathbb{P}_{x}^K(\cdot) \mathbb{P}_{y}^K(\cdot)$. Similarly, we write $\mathbb{P}_{x, y}^K$ for the law of two walks where the conductances adjacent to both starting points are set to $K$.

\subsection{Regeneration times}
We do not give the definition of \emph{regeneration times} since this would distract from the main purpose of the paper. However, we will give detailed references and highlight the properties of regeneration times that we will use in the present work.

For any $K > 1$, the \emph{regeneration times} form a family of increasing random variables $(\tau^{K}_i)_{i \ge 1}$ such that for any $k \ge 1$, the random walk $(X_{\tau_k +i}-X_{\tau_k})_{i \ge 1}$ is independent of $(X_{i})_{0 \le i \le \tau_k}$ and distributed as $(X_i)_{i \ge 0}$ under a measure 
\begin{equation*}
    \mathbb{P}_0^{K}\left(\cdot \mid D^{\vec{\ell}} = +\infty \right),
\end{equation*} 
where $\mathbb{P}^{K}_0(D^{\vec{\ell}}=+\infty) > 0$. This measure has been defined in \cite[Section~5]{Kious_Frib}, see also \cite[Section~3.3]{QuenchedBiasedRWRC} for a more concise presentation of the same material. We will drop the $K$ in the notation $\tau_{i}^K$.

\begin{remark1}\label{rmk:KOpen}
    The parameter $K>0$ appears in the definition because the conductances are not uniformly elliptic. This means that the probability to regenerate at a new maximum is not bounded away from zero. However, one can define the regeneration times to be configuration dependent, i.e.\ restrict to maxima that happen on $K$-open point; these are points $x \in \mathbb{Z}^d$ with $c_*(e) \in [1/K, K]$ for all $e \colon x \in e$. Then, for any fixed $K$, one has strictly positive probability to regenerate at a new maximum that is $K$-open.
\end{remark1}

Roughly speaking, $D^{\vec{\ell}}$ is the first time at which the random walk backtracks to the origin level in the direction $\vec{\ell}$. Then $(\tau_i)_{i \ge 0}$ is the increasing sequence of times at which the random walk never backtracks to the current level at future times. That is why the law of the shifted walk is conditioned on the event $\{D^{\vec{\ell}}=+\infty\}$ and why the regeneration times are not stopping times. The random variable $D \equiv D^{\vec{\ell}}$ is defined in \cite[(5.3)]{Kious_Frib}. See also \eqref{eqn:DSingleRegeneration} for a definition of the random variable $D$.

Another key property of regeneration times is that the diameter of the smallest ball in which the trajectory is contained between two regeneration times has a fast-decaying tail. More precisely, we define for all $k \ge 1$ and any $\alpha>d+3$ (we will fix this constant throughout the paper)
\begin{equation}\label{eqn:RegenerationBox}
    \chi_k = \inf \left\{m \in \mathbb{N}: \{X_i - X_{\tau_{k-1}}: \tau_{k-1}\le i \le \tau_{k}\} \subset \mathcal{B}(m,m^{\alpha})\right\}.
\end{equation}

\begin{definition1}\label{def:RegTimesWellDef}
    A regeneration structure (i.e.\ a sequence of regeneration times and regeneration points) in the direction $\vec{\ell} \in \mathbb{S}^{d-1}$ is well defined if it satisfies the following set of properties:
    \begin{enumerate}
        \item $\{\tau_i\}_{i \ge 1}$ is an increasing sequence of almost surely finite random variables. Moreover, there exists $\eta>0$ depending only on $K$ and $\vec{\ell}$ such that the event $\{D^{\vec{\ell}} = \infty\}$ has probabilities $\mathbb{P}_0(D^{\vec{\ell}} = \infty) \ge \eta$ and $\mathbb{P}^K_0(D^{\vec{\ell}} = \infty) \ge \eta$.
        \item For all $k \in \mathbb{N}$ we have that $X_{\tau_k} \cdot \vec{\ell} > X_j$ for all $0 \le j < \tau_k$ and $X_{\tau_k} \cdot \vec{\ell} < X_i$ for all $i > \tau_k$.
        \item For all $k \ge 0$, $(X_{\tau_k +i}-X_{\tau_k})_{i \ge 1}$ is independent, under $\mathbb{P}_0$, of $(X_{i})_{0 \le i \le \tau_k}$ and, for all $k \ge 1$ it is distributed as $(X_i)_{i \ge 0}$ under the measure $\mathbb{P}_0^{K}(\cdot \mid D^{\vec{\ell}} = +\infty )$.
        \item For all $M>0$ there exists $K_0>0$ such that for all $K \ge K_0$ we have that
        \begin{equation*}
            \mathbb{P}_{0}\left( \chi_1 > n \right)\le Cn^{-M}, \quad \text{ and } \quad \mathbb{P}_{0}^K\left( \chi_1 > n \right)\le Cn^{-M}.
        \end{equation*}
        \item The sequence of increments of regeneration times $\{\tau_{i+1} - \tau_i\}_{i \ge 1}$ is in the domain of attraction of a $\gamma$-stable random variable. This means that each increment has Laplace transform $E[\exp(-\lambda X)] \propto e^{-\lambda^\gamma}$ (the marginal of a stable subordinator).
    \end{enumerate}
\end{definition1}
We observe that in all the definition above $\tau_1$ and $X_{i}, 0 \le i \le \tau_1$ is indeed independent of the future of the trajectory but has a different distribution w.r.t.\ the other regeneration intervals.

We state a result that summarizes several results of \cite{Kious_Frib}. In particular one needs to look at Theorem 5.1-5.3-5.4, Lemma~6.1 and Proposition~10.3 of that paper.
\begin{theorem}\label{theo:FKRegStruct}\cite{Kious_Frib}.
    There exists a regeneration structure in the direction $\vec{\ell}$ that is well-defined in the sense of Definition~\ref{def:RegTimesWellDef}.
\end{theorem}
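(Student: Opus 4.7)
The plan is to verify each of the five items in Definition~\ref{def:RegTimesWellDef} by locating the matching statement in \cite{Kious_Frib}; the theorem is essentially a translation of results already present there into the formalism used in the present paper, so the argument is more one of bookkeeping than of genuine new mathematics.

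First, I would adopt the regeneration times $(\tau_i^K)_{i \ge 1}$ constructed in Section~5 of \cite{Kious_Frib}. The construction requires regeneration to occur at $K$-open vertices (see Remark~\ref{rmk:KOpen}) so that attaining a new directional maximum has positive probability of being a permanent one even in the absence of uniform ellipticity. With that convention in place, property (1)---almost sure finiteness of each $\tau_i$ together with the uniform lower bound $\eta>0$ for $\mathbb{P}_0(D^{\vec{\ell}}=\infty)$ and $\mathbb{P}_0^K(D^{\vec{\ell}}=\infty)$---is exactly the content of Theorem~5.1 of \cite{Kious_Frib}. Property (2), the strict past-maximum and future-minimum conditions for $X_{\tau_k}\cdot\vec{\ell}$, is then built into the definition of $\tau_k$ itself, since $\tau_k$ is chosen as a time when the walk reaches a level that it does not revisit either from below in the past or from above in the future.

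For property (3), which encodes independence of the pre-$\tau_k$ trajectory from the shifted walk together with the identification of the post-$\tau_k$ law as $\mathbb{P}_0^K(\cdot\mid D^{\vec{\ell}}=\infty)$, I would appeal to Theorems~5.3 and~5.4 of \cite{Kious_Frib}; these are obtained by the standard Sznitman--Zerner-type renewal decomposition adapted to the sub-ballistic conductance model. Property (4), giving polynomial tail decay of $\chi_1$ of arbitrary degree, is then Proposition~10.3 of \cite{Kious_Frib}: given $M$, one may choose $K_0$ large so that $\chi_1$ has finite $M$-th moment under both $\mathbb{P}_0$ and $\mathbb{P}_0^K$, and the stated tail bound follows by Markov's inequality. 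Finally, property (5), namely that $\{\tau_{i+1}-\tau_i\}_{i\ge 1}$ lies in the domain of attraction of a $\gamma$-stable law, is Lemma~6.1 of \cite{Kious_Frib}, where the heavy-tail asymptotics of the increments are tracked back to the stable law of the conductance of a single trapping edge (this is also consistent with the scaling $\mathrm{Inv}(n)\approx n^{1/\gamma}$ used to define $S_n$ in \eqref{EqnThreeMainQuantities}).

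The only real point that demands attention is the consistency of the parameter $K$: the probability lower bound in property (1) and the moment control in property (4) each require $K$ to be at least some threshold depending on the target exponent $M>\alpha=d+3$ fixed in \eqref{eqn:RegenerationBox}. Since all the statements above are monotone in $K$ in the sense that they remain valid for every $K\ge K_0$, it suffices to fix one $K$ large enough to satisfy the worst of these constraints and use that common value throughout. With $K$ so fixed, the five properties of Definition~\ref{def:RegTimesWellDef} hold simultaneously for the regeneration structure of \cite{Kious_Frib}, which is exactly the assertion of the theorem.
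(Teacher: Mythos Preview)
Your proposal is correct and follows exactly the approach the paper takes: the paper gives no proof of this theorem beyond the sentence preceding it, which lists precisely the same references in \cite{Kious_Frib} (Theorems~5.1, 5.3, 5.4, Lemma~6.1, and Proposition~10.3) that you map to the five items of Definition~\ref{def:RegTimesWellDef}. Your additional remark on the consistency of the parameter $K$ is a useful clarification but not something the paper spells out.
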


We also remark that, by their construction, we have that deterministically $X_{\tau_j} \cdot \vec{\ell} - X_{\tau_{j-1}} \cdot \vec{\ell} \ge 2/\sqrt{d}$, we will employ this fact repeatedly. This construction is very useful to understand the behaviour a random walk in random environment. However, when studying two random walks in random environment another construction introduced in \cite{QuenchedBiasedRWRC} is more suitable.

\subsection{Regeneration levels}
Let $X^{(1)}$ and $X^{(2)}$ be two random walks defined as $X$, independent under the quenched law but evolving in the same environment. When considering two random walks in random environment, the good notion to understand their joint behaviour is to consider \emph{joint regeneration levels}. We begin with an informal introduction of these objects. 

For any $K > 0$, the \emph{joint regeneration levels} is a family of increasing random variables $(\mathcal{L}^{K}_i)_{i \ge 1}$ such that for any $k \ge 0$, conditionally on 
\[\{\mathcal{L}_k,(X^{(1)}_{i})_{0 \le i \le T_{\mathcal{L}_k}},(X^{(2)}_{j})_{0 \le j \le T_{\mathcal{L}_k}}\},\]
the random walks $(X^{(1)}_{T_{\mathcal{L}^{K}_k}+i})_{i \ge 0}$ and $(X^{(2)}_{T_{\mathcal{L}^{K}_k}+i})_{i \ge 0}$ are distributed as $(X^{(1)}_i)_{i \ge 0}$ and $(X^{(2)}_j)_{j \ge 0}$ under 
\begin{equation}\label{eqn:MarkovJRL}
    \mathbb{P}_{X^{(1)}_{T_{\mathcal{L}^{K}_k}},X^{(2)}_{T_{\mathcal{L}^{K}_k}}}^{K}\left(\cdot \,|\, \mathcal{D}^{\bullet} = +\infty \right),\quad \text{ where } \quad \mathbb{P}_{X^{(1)}_{T_{\mathcal{L}^{K}_k}},X^{(2)}_{T_{\mathcal{L}^{K}_k}}}^{K}(\mathcal{D}^{\bullet}=+\infty) > 0.
\end{equation}
The conditioned measure appearing in \eqref{eqn:MarkovJRL} and the Markov property just stated are the subject of \cite[Theorem~3.25]{QuenchedBiasedRWRC}. For $j \in \{1,2\}$, we introduce the notation $\rho^{(j)}_i \coloneqq T^{(j)}_{\mathcal{L}^{K}_i}$. We will drop $K$ in the notation, the reason for the appearance of this parameter is explained in Remark~\ref{rmk:KOpen}.

In words, a level $L>0$ is a joint regeneration levels if both $T_{\mathcal{H}^+(L)}^{(1)}$ and $T_{\mathcal{H}^+(L)}^{(2)}$ are regeneration times for the respective walk. That is, upon entering the half-space $\mathcal{H}^+(L)$ both walks regenerate. In particular, for any $j \in \{1,2\}$ we have the inclusion $\{\rho^{(j)}_i\}_{i\ge 1} \subset \{\tau^{(j)}_i\}_{i\ge 1}$.

We note that the definition above was given with $\mathcal{D}^{\bullet} \equiv \mathcal{D}^{\bullet, \vec{\ell}}$ in \cite[Section~3.4]{QuenchedBiasedRWRC} (see also \eqref{definition_D} for a concise definition). Understanding the behaviour of two random walks is much easier using joint regeneration levels since we can decompose their trajectories between regeneration levels and use the above Markov property.

\begin{definition1}\label{def:JointRegLevWellDef}
    Let $\mathcal{U} \coloneqq \{x \in \mathbb{Z}^d \colon -e_1 \cdot \vec{\ell} \le x \le e_1 \cdot \vec{\ell} \}$. A joint regeneration structure (i.e.\ a sequence of joint regeneration levels and joint regeneration points) in the direction $\vec{\ell}\in \mathbb{S}^{d-1}$ is well defined if it satisfies the following properties:
    \begin{enumerate}
        \item $\{\mathcal{L}_i\}_{i \ge 1}$ is an increasing sequence of almost surely finite random variables. Moreover, there exists $\eta>0$ depending only on $K$ and $\vec{\ell}$ such that the event $\{\mathcal{D}^{\bullet} = \infty\}$ has probabilities $\mathbb{P}_{U_1, U_2}(\mathcal{D}^{\bullet} = \infty) \ge \eta$ and $\mathbb{P}^K_{U_1, U_2}(\mathcal{D}^{\bullet} = \infty) \ge \eta$ uniformly over all $U_1, U_2 \in \mathcal{U}$.
        \item For all $k \in \mathbb{N}$, $i = 1, 2$ we have that $X^{(i)}_{T_{\mathcal{L}_k}} \cdot \vec{\ell} > X^{(i)}_j$ for all $0 \le j < T^{(i)}_{\mathcal{L}_k}$ and $X^{(i)}_{\tau_k} \cdot \vec{\ell} < X^{(i)}_u$ for all $u > T^{(i)}_{\mathcal{L}_k}$.
        \item The markovian property, stated in \eqref{eqn:MarkovJRL} and above, holds for $\{\mathcal{L}_i\}_{i \ge 1}$ and $\{\mathcal{D}^{\bullet} = \infty\}$.
        \item For all $M>0$ there exists $K_0>0$ such that for all $K \ge K_0$ we have that, uniformly over all $U_1, U_2 \in \mathcal{U}$.
        \begin{equation*}
            \mathbb{P}_{U_1, U_2}\left( \mathcal{L}_1 > n \right)\le Cn^{-M}, \quad \text{ and } \quad \mathbb{P}^K_{U_1, U_2}\left( \mathcal{L}_1 > n \right)\le Cn^{-M}.
        \end{equation*}
    \end{enumerate}
\end{definition1}

We state a result that summarises several results of \cite{QuenchedBiasedRWRC}. In particular one needs to look at Proposition 3.18, Proposition 3.24, Theorem 3.25 of that paper.

\begin{theorem}\label{theo:FLSRegStruct}\cite{QuenchedBiasedRWRC}.
    There exists a joint regeneration structure in the direction $\vec{\ell}$ that is well-defined in the sense of Definition~\ref{def:JointRegLevWellDef}.
\end{theorem}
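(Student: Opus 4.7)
The strategy is to construct $\{\mathcal L_i\}_{i\ge 1}$ by an iterative attempt procedure that parallels the single-walk construction from Theorem~\ref{theo:FKRegStruct}. Starting from any pair $(U_1,U_2)\in\mathcal U$, I would evolve both walks jointly and, whenever both of them simultaneously reach a new record level in direction $\vec\ell$ through $K$-open entry sites (in the sense of Remark~\ref{rmk:KOpen}), declare the common level a candidate joint regeneration level $L$ and check the non-backtracking event $\{\mathcal D^{\bullet}=\infty\}$ that neither walk ever returns to $\mathcal H^-(L)$. If it holds, set $\mathcal L_1=L$; otherwise, restart the procedure from the walks' current positions. The same template then applies inductively at each subsequent $\mathcal L_i$.

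Property 1 requires a uniform lower bound $\eta>0$ for $\mathbb{P}_{U_1,U_2}(\mathcal D^{\bullet}=\infty)$. I would obtain this by combining the single-walk non-backtracking bound $\mathbb{P}^K_0(D^{\vec\ell}=\infty)\ge\eta$ from Theorem~\ref{theo:FKRegStruct} with an elliptic straightening argument at the two entry points: since conductances on $K$-open sites are pinned to $[1/K,K]$, one can force both walks into adjacent $K$-open sites at a common level with a cost bounded below independently of the environment outside. Property 2 is immediate from the definition of $\mathcal D^{\bullet}$.

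The Markov property in Property 3 rests on the short-range dependence of the environment. Once both walks have entered $\mathcal H^+(\mathcal L_k)$ through $K$-open sites and are known never to return, the conditioning fixes only the conductances inside $\mathcal E_{X^{(1)}_{\rho_k}}\cup\mathcal E_{X^{(2)}_{\rho_k}}$ and the trajectories up to time $\rho_k$. The remaining edges inside $\mathcal H^+(\mathcal L_k)$ are independent of those inside $\mathcal H^-(\mathcal L_k)$ under $\mathbf P$, so the future of $(X^{(1)},X^{(2)})$ is distributed exactly as two fresh walks from $(X^{(1)}_{\rho_k},X^{(2)}_{\rho_k})$ under the measure displayed in \eqref{eqn:MarkovJRL}, reproducing the argument of \cite[Theorem~3.25]{QuenchedBiasedRWRC}.

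Finally, Property 4 follows by combining the positive success probability $\eta$ per attempt with the observation that the spatial displacement accumulated in a failed attempt is controlled by the single-walk box-size tails $\mathbb{P}(\chi_1>n)\le Cn^{-M}$ from Definition~\ref{def:RegTimesWellDef}(4); a standard geometric-type argument then gives the polynomial decay $\mathbb{P}_{U_1,U_2}(\mathcal L_1>n)\le Cn^{-M}$, uniformly over $U_1,U_2\in\mathcal U$. The main obstacle I expect is establishing the uniform positivity in Property 1 and the Markov property in Property 3 simultaneously: the two walks are coupled through the common environment, so the single-walk ellipticity arguments must be sharpened so that both walks can be forced into a favorable joint configuration at the same level without breaking the independence structure between the two half-spaces that the Markov property ultimately relies on.
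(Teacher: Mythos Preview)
The paper does not give its own proof of this theorem: it is stated as a citation result, with the explicit remark that one should consult Propositions~3.18, 3.24 and Theorem~3.25 of \cite{QuenchedBiasedRWRC}. There is therefore no in-paper argument to compare against; your proposal is essentially an attempt to reconstruct the cited construction.

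Your sketch captures the overall architecture correctly---iterated attempts at candidate common levels with $K$-open entry points, short-range dependence for the Markov property, and geometric-type control on failed attempts---but a few points are imprecise or incomplete. First, the phrase ``simultaneously reach a new record level'' is misleading: the two walks are not time-synchronised, and a joint regeneration level is a spatial level $L$ such that each walk's first entry time into $\mathcal H^+(L)$ is a regeneration time for that walk; the construction must therefore track candidate levels rather than simultaneous record times. Second, the uniform lower bound on $\mathbb P_{U_1,U_2}(\mathcal D^{\bullet}=\infty)$ does not follow by simply combining two single-walk bounds, since the walks share the environment and the events $\{D^{(1)}=\infty\}$, $\{D^{(2)}=\infty\}$ are correlated; this is precisely where the elliptic $K$-open structure and a careful decoupling (forcing the two entry points to be separated enough that the relevant edge neighbourhoods are disjoint) are needed, and your sketch does not explain how the separation is achieved uniformly over $U_1,U_2\in\mathcal U$ (including the case $U_1=U_2$). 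Third, for Property~4 the single-walk tail $\mathbb P(\chi_1>n)\le Cn^{-M}$ controls the size of one regeneration block, but a failed \emph{joint} attempt may consume many single-walk regenerations of each walk before a common candidate level appears; the geometric argument must therefore also control the number of single-walk regenerations between successive joint candidates, which requires an additional estimate (this is the content of \cite[Proposition~3.18]{QuenchedBiasedRWRC}) that your sketch does not supply.
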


From now on we will drop the dependence on $\vec{\ell}$ for the regeneration structures (regular and joint). The reader should always assume that the regeneration times (resp.\ joint regeneration levels) considered are defined in the direction $\vec{\ell}$. In the context of two walks $X^{(1)}, X^{(2)}$ we will use the notation $\{D^\otimes = \infty\} \coloneqq \{D^{(1)} = \infty\} \cap \{D^{(2)} = \infty\} $, that is the event that both regenerate. We remark that, in general $\{D^{\otimes} = \infty\} \neq \{\mathcal{D}^\bullet = \infty\}$ and in fact $\{\mathcal{D}^\bullet = \infty\} \subseteq \{D^{\otimes} = \infty\}$.

To end this section we define, for $n \ge 0$ and $\varepsilon >0$, the event
\begin{equation}\label{eqn:UniformBoundBoxes}
    F_{n, \varepsilon} \coloneqq \left\{ \forall k \in \{ 1, \dots, n^2 \}, \chi_k \le n^\varepsilon \right\}.
\end{equation}
For two random walks evolving independently in the same environment we define the event
\begin{equation}\label{eqn:UniformBoundJointSlabs}
    L_{n, \varepsilon} \coloneqq \left\{ \forall k \in \{ 1, \dots, n^2 \}, \mathcal{L}_{k} - \mathcal{L}_{k - 1} \le n^\varepsilon \right\}.
\end{equation}

\begin{lemma}\label{lemma:GoodEventsBoxes}
For all $M>0$ there exits a $K_0>0$ such that for all $K \ge K_0$
\begin{equation}\label{eqn:FNepsi}
    \mathbb{P}_{0}((F_{n, \varepsilon})^c) \le C n^{-M}.
\end{equation}
Moreover, for all $M>0$ there exits a $K_0>0$ such that for all $K \ge K_0$
\begin{equation}\label{eqn:LNepsi}
    \mathbb{P}_{0, 0}((L_{n, \varepsilon})^c) \le C n^{-M}.
\end{equation}
\end{lemma}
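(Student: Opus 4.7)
The plan is to deduce both tail bounds by a straightforward union bound from the single-increment tail estimates already supplied by Definition~\ref{def:RegTimesWellDef}(4) and Definition~\ref{def:JointRegLevWellDef}(4). Both events involve the maximum of $n^2$ quantities, so the polynomial factor $n^2$ is absorbed by choosing the exponent in the per-increment tail bound large enough, which is permitted by those definitions at the cost of enlarging $K$.

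More precisely, for \eqref{eqn:FNepsi}, I would first write $\mathbb{P}_0((F_{n,\varepsilon})^c) \le \sum_{k=1}^{n^2} \mathbb{P}_0(\chi_k > n^\varepsilon)$. The increment $\chi_1$ is controlled directly by Definition~\ref{def:RegTimesWellDef}(4) under $\mathbb{P}_0$, and for $k \ge 2$, Definition~\ref{def:RegTimesWellDef}(3) implies that $\chi_k$ is distributed as $\chi_1$ under $\mathbb{P}_0^K(\cdot \mid D^{\vec{\ell}}=\infty)$; the bound in Definition~\ref{def:RegTimesWellDef}(4) applies to this conditional law as well (after paying a factor $1/\eta$ coming from $\mathbb{P}_0^K(D^{\vec{\ell}}=\infty) \ge \eta$ from Definition~\ref{def:RegTimesWellDef}(1)). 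Hence, given the target exponent $M$, fix $M' = (M+2)/\varepsilon$ and pick $K_0$ large so that $\mathbb{P}_0(\chi_k > m), \mathbb{P}_0^K(\chi_k > m) \le C m^{-M'}$ for all $k$ and $m$; evaluating at $m=n^\varepsilon$ gives $\mathbb{P}_0(\chi_k > n^\varepsilon) \le C n^{-(M+2)}$, and the union bound produces $\mathbb{P}_0((F_{n,\varepsilon})^c) \le C n^{-M}$.

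The bound \eqref{eqn:LNepsi} is handled by the same template with the joint regeneration levels. Writing $\mathbb{P}_{0,0}((L_{n,\varepsilon})^c) \le \sum_{k=1}^{n^2} \mathbb{P}_{0,0}(\mathcal{L}_k - \mathcal{L}_{k-1} > n^\varepsilon)$, the Markov property \eqref{eqn:MarkovJRL} (Definition~\ref{def:JointRegLevWellDef}(3)) allows me to condition on the pair $(X^{(1)}_{T_{\mathcal{L}_{k-1}}}, X^{(2)}_{T_{\mathcal{L}_{k-1}}})$; since the two walks regenerate at the same level and enter $\mathcal{H}^+(\mathcal{L}_{k-1})$, their relative displacement in the direction $\vec{\ell}$ lies in $\mathcal{U}$ (up to translation), so the shifted pair falls into the uniform family of starting configurations covered by Definition~\ref{def:JointRegLevWellDef}(4). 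Applying that bound with $M'=(M+2)/\varepsilon$ uniformly over such starting configurations, and summing over $k \le n^2$, again gives the required $Cn^{-M}$.

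I do not expect a serious obstacle here: the only point requiring attention is the measurability/translation bookkeeping that justifies using the uniform-in-$(U_1,U_2)$ bound of Definition~\ref{def:JointRegLevWellDef}(4) after each joint regeneration, but this is exactly the content of the Markov property \eqref{eqn:MarkovJRL} that was established in \cite{QuenchedBiasedRWRC}. Thus the proof reduces to a two-line union bound plus invocation of Theorems~\ref{theo:FKRegStruct} and \ref{theo:FLSRegStruct}, with $K_0$ chosen depending on $M$ and $\varepsilon$.
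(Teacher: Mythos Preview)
Your proposal is correct and follows essentially the same approach as the paper: a union bound over the $n^2$ increments, combined with the per-increment tail estimates provided by Theorems~\ref{theo:FKRegStruct} and~\ref{theo:FLSRegStruct}, absorbing the $n^2$ factor by choosing the exponent (and hence $K_0$) large enough. You are in fact slightly more careful than the paper's terse two-line argument, spelling out the distinction between $\chi_1$ and $\chi_k$ for $k\ge 2$ and the role of the Markov property for the joint levels.
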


\begin{proof}
    The proof of the first fact is a straightforward consequence of Theorem~\ref{theo:FKRegStruct}. Indeed,
    \begin{equation*}
        \mathbb{P}_{0}((F_{n, \varepsilon})^c) \le \sum_{i = 1}^{n^2} \mathbb{P}_{0}(\chi_i > n^{\varepsilon}) \le n^{2} n^{-M},
    \end{equation*}
    which is enough to conclude as we can choose $M$ arbitrarily large by tuning $K_0$. The second statement follows with the same reasoning from Theorem~\ref{theo:FLSRegStruct}. 
\end{proof}

The reader should assume in the following that $d =2, 3, 4$. Our methods extend to higher dimensions, but we would need to carry the dependence of constants on the dimension throught our proofs.

\section{The intersection structure}\label{sect:Intersections}

\noindent Let $X^{(1)}_{\cdot}$ and $X^{(2)}_{\cdot}$ be two random walks and define the counting random variable
\begin{equation}\label{eqn:Intersections}
    I_{n} \coloneqq \sum_{\substack{z \in \mathcal{B}_{0}(n, n^\alpha) }} \mathds{1}_{\{ X^{(1)} \text{ visits } \mathcal{V}_z \}} \mathds{1}_{\{ X^{(2)} \text{ visits } \mathcal{V}_z \}}.
\end{equation}
We give the following bound on the number of intersections.

\begin{proposition}\label{prop:FewCrossing}
Fix $\delta>0$. There exists $K_0 > 0$ such that, for all $K \ge K_0$ and for every $n \in \mathbb{N}$, we have
\begin{equation*}
    \mathbf{E}\left[ E^\omega_{0, 0}\left[ I_n \right] \right] \le C n^{1/2 + \delta},
\end{equation*}
where $C=C(\delta)>0$ depends only on $\delta$.
\end{proposition}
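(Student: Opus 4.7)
My approach is to partition the longitudinal extent of $\mathcal{B}_0(n,n^\alpha)$ using the joint regeneration levels of Theorem~\ref{theo:FLSRegStruct} and to bound the expected number of intersections slab by slab via an anti-concentration estimate for the transverse offset of the two walks at joint regenerations.

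First I would restrict to the good event $G_n := F^{(1)}_{n,\varepsilon}\cap F^{(2)}_{n,\varepsilon}\cap L_{n,\varepsilon}$ for a small $\varepsilon=\varepsilon(\delta)>0$ to be fixed. By Lemma~\ref{lemma:GoodEventsBoxes}, $\mathbb{P}(G_n^c)\le Cn^{-M}$ for any $M$ once $K\ge K_0(M)$; combined with the deterministic bound $I_n\le |\mathcal{B}_0(n,n^\alpha)|\le n^{1+\alpha(d-1)}$, the off-$G_n$ contribution is made negligible. On $G_n$, the walks visit at most $Cn$ joint regeneration levels inside the box (consecutive levels differ by at least a fixed constant in the $\vec\ell$-direction); moreover each walk's trajectory in the $k$-th slab stays within $O(n^{2\varepsilon})$ of its entry point $Y^{(i)}_{k-1}:=X^{(i)}_{\rho^{(i)}_{k-1}}$, because the longitudinal slab width $\le n^{\varepsilon}$ admits at most $n^{\varepsilon}$ single-walk regenerations inside the slab, each of spread $\le n^{\varepsilon}$. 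Denoting by $I_n^{(k)}$ the contribution of slab $k$ to $I_n$, this yields
\[
I_n^{(k)}\mathds{1}_{G_n}\;\le\; C\, n^{2\varepsilon d}\,\mathds{1}_{\{\|Y^{(1)}_{k-1}-Y^{(2)}_{k-1}\|\le 4 n^{2\varepsilon}\}}.
\]

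Introduce $D_k := Y^{(1)}_k - Y^{(2)}_k$ and let $D_k^\perp$ be its projection onto $\vec\ell^\perp$. By the Markov property \eqref{eqn:MarkovJRL}, $(D_k)_{k\ge0}$ is a Markov chain (translation-invariance of $\mathbf{P}$ makes the transition depend only on the current $D_k$) with mean-zero increments (the two walks have the same law) and all polynomial moments finite (inherited from Definition~\ref{def:JointRegLevWellDef}(4) together with the single-walk regeneration-box tails of Theorem~\ref{theo:FKRegStruct}). The core step I would establish is the local-CLT-type anti-concentration
\[
\mathbb{P}_{0,0}\!\bigl(\|D_k^\perp\|\le r\bigr)\;\le\; C\bigl((r\vee 1)/\sqrt{k}\bigr)^{d-1},\qquad r\le \sqrt k.
\]
Plugging this into the slab sum and splitting at $k_0 := n^{4\varepsilon}$ (below which one uses the trivial bound $1$),
\[
\mathbb{E}_{0,0}\!\left[I_n\mathds{1}_{G_n}\right]\;\le\; C n^{2\varepsilon d}\Bigl(k_0 + n^{2\varepsilon(d-1)}\!\!\sum_{k=k_0}^{Cn} k^{-(d-1)/2}\Bigr),
\]
which evaluates to $Cn^{1/2+6\varepsilon}$ for $d=2$ (where the tail sum $\sum k^{-1/2}\asymp \sqrt n$ is dominant), $Cn^{10\varepsilon}\log n$ for $d=3$, and $Cn^{12\varepsilon}$ for $d=4$. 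In each case the bound is $n^{1/2+O(\varepsilon)}$, and choosing $\varepsilon$ small in terms of $\delta$ gives the claimed $Cn^{1/2+\delta}$.

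\emph{Main obstacle.} The delicate point is the anti-concentration estimate. Under $\mathbb{P}_{0,0}$ the chain $(Y^{(1)}_k,Y^{(2)}_k)$ does \emph{not} have iid increments, because when the two walks are close the shared environment correlates their dynamics, so a standard iid local CLT cannot be invoked off the shelf. The way around this is to exploit the short-range nature of the conductance field: once the transverse separation exceeds the range of environmental dependence (essentially $O(1)$), the increments decouple into two nearly independent copies, to which the classical LCLT in $\mathbb{R}^{d-1}$ applies, while the close-separation regime is absorbed into the $n^{O(\varepsilon)}$ factor provided by $G_n$. Executing this decoupling uniformly over the starting configuration will be the bulk of the work; by contrast, the combinatorial exponent-counting in the final sum is straightforward once the anti-concentration is available.
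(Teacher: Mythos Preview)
Your reduction is correct and matches the paper: on the good event $G_n$ each slab has longitudinal width $\le n^\varepsilon$ and each walk's excursion inside it has diameter $\le n^{O(\varepsilon)}$, so any intersection in slab $k$ forces $\|D_{k-1}\|\le n^{O(\varepsilon)}$, and one is left with bounding $\mathbb{E}\bigl[\#\{k\le Cn:\|D_k^\perp\|\le n^{O(\varepsilon)}\}\bigr]$. This is exactly the quantity $|\mathrm{JRL}^\le(n,\varepsilon)|$ of Proposition~\ref{prop:CrossingJRL}, from which the paper also derives Proposition~\ref{prop:FewCrossing}.

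The gap is in how you bound that count. The pointwise anti-concentration $\mathbb{P}_{0,0}(\|D_k^\perp\|\le r)\le C(r/\sqrt k)^{d-1}$ is not a known result for this chain, and your sketched resolution is circular: you say the increments decouple once the separation exceeds $O(1)$ and that the close-separation regime is ``absorbed into the $n^{O(\varepsilon)}$ factor provided by $G_n$'', but $G_n$ says nothing about how many of the first $k$ steps are spent in the close regime, and controlling that occupation time is precisely the estimate you are trying to prove. Without it you cannot isolate a run of nearly-i.i.d.\ increments of length comparable to $k$, so the classical LCLT does not apply. In $d=2$ there is no slack: you need the full $k^{-1/2}$ rate, so a soft argument will not close the gap.

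The paper sidesteps pointwise anti-concentration entirely. Instead of bounding $\mathbb{P}(\|D_k^\perp\|\le r)$ for each $k$, it proves a uniform \emph{escape} bound: from any close configuration at a joint regeneration level, with probability at least $cn^{-1/2-\varepsilon}$ the two walks separate to transverse distance $\ge n^{10\varepsilon}$ and remain separated for all levels up to $n$ (Lemma~\ref{lemma:SeparationSameEnv}). This is first established under $Q^K$ (independent environments), where the transverse offset at regeneration times is a genuine i.i.d.\ mean-zero walk and the estimate follows from \cite[Lemma~4.3]{Berger_Zeitouni}; it is then transferred to $\mathbb{P}^K$ by the observation that on the separation event the walks never share an edge, so the two laws coincide (Lemma~\ref{lemma:Technical}). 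The number of close joint regenerations is then stochastically dominated by a geometric with success probability $cn^{-1/2-\varepsilon}$, giving expectation $n^{1/2+\varepsilon}$ directly. This Berger--Zeitouni-type argument requires no analysis of the chain in the close regime beyond the one-step Markov property, which is exactly what your LCLT route lacks.
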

We will actually prove a slightly different (and admittedly less-transparent) result regarding the intersections which, despite its less straightforward interpretation, it immediately implies Proposition~\ref{prop:FewCrossing} and it is a bit easier to employ in our computations.
In order to present such result, we need to introduce some further notation.

For $n \in \mathbb{N}$ and $\varepsilon\in (0, \tfrac{1}{10})$ we define the set
\begin{equation}\label{eqn:BadRegLevels}
    \mathrm{JRL}^\le(n, \varepsilon) \coloneqq \left\{ k \colon \mathcal{L}_{k} \le n, \Big\| X^{(1)}_{T_{\mathcal{L}_k}} - X^{(2)}_{T_{\mathcal{L}_k}} \Big\|_2 \le n^{10\varepsilon} \right\}.
\end{equation}
Note that, in the definition above, the constant $10$ does not play any meaningful role; any absolute constant $c>0$ not depending on $n, \varepsilon$ would work. 

The following is the main result of this section.
\begin{proposition}\label{prop:CrossingJRL}
There exists a constant $c>0$ depending only on the parameters of the model such that, for any given $\varepsilon > 0$, there exists $K_0 > 0$ such that for all $K\ge K_0$ and for all $n \in \mathbb{N}$, we have
\begin{equation*}
    \mathbf{E}\left[ E^\omega_{0, 0}\left[ |\mathrm{JRL}^\le(n, \varepsilon)| \right] \right] \le C n^{1/2 + c\varepsilon},
\end{equation*}
where $C$ depends only on $\varepsilon > 0$.
\end{proposition}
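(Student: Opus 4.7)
The plan is to control the process of transverse differences between the two walks at joint regeneration levels, viewed as a Markov chain on the $(d-1)$-dimensional hyperplane $\vec{\ell}^{\perp}$, and to bound the probability that it lies near the origin via a Gaussian density estimate.

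First, applying Lemma~\ref{lemma:GoodEventsBoxes} with $K$ tuned sufficiently large, I would work on the event $L_{n,\varepsilon}$ of~\eqref{eqn:UniformBoundJointSlabs}, at a cost of $O(n^{-M})$ in $\mathbb{P}_{0,0}$-probability. Since consecutive joint regeneration levels are deterministically separated by at least $2/\sqrt{d}$ (as inherited from the single-walk bound), the number $N_n$ of indices $k$ with $\mathcal{L}_k \le n$ is deterministically $O(n)$. Define the difference process $D_k := X^{(1)}_{T_{\mathcal{L}_k}} - X^{(2)}_{T_{\mathcal{L}_k}}$ and let $D_k^\perp$ be its projection onto $\vec{\ell}^\perp$. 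Both walks first enter $\mathcal{H}^+(\mathcal{L}_k)$ through a single unit step, so $|D_k \cdot \vec{\ell}|$ is uniformly bounded, and the event $\{\|D_k\|_2 \le n^{10\varepsilon}\}$ reduces (up to a constant) to $\{\|D_k^\perp\|_2 \le n^{10\varepsilon}\}$.

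The Markov property~\eqref{eqn:MarkovJRL} together with the translation invariance of $\mathbf{P}$ makes $(D_k^\perp)_{k\ge 0}$ a $(d-1)$-dimensional Markov chain. Exchangeability of $X^{(1)}$ and $X^{(2)}$ implies that the marginal of $D_k^\perp$ is symmetric and that its one-step conditional drift is an odd function of the current state, vanishing at the origin; in particular the chain has no systematic transverse drift. The increments inherit polynomial tails of arbitrarily high order (by Definition~\ref{def:JointRegLevWellDef}.4, modulating $K$) and possess a uniformly non-degenerate per-step covariance by an ellipticity argument at the scale of one joint regeneration block. The key estimate would then be the Gaussian density upper bound
\[
\mathbb{P}_{0,0}\!\bigl(\|D_k^\perp\|_2 \le r\bigr) \le C\Bigl(\frac{r \vee 1}{\sqrt{k \vee 1}}\Bigr)^{d-1}, \qquad k \ge 1,\ r \ge 1,
\]
which I would establish by a ``far-regime vs.\ near-regime'' decomposition: once $\|D_k^\perp\|$ exceeds the typical size of a joint regeneration block, the two walks generate their future increments from essentially disjoint portions of the i.i.d.\ environment, so $(D_{k+1}^\perp - D_k^\perp)$ becomes iid across $k$ and the Gaussian bound follows from a standard lattice local CLT in $\mathbb{Z}^{d-1}$; in the complementary ``near'' regime the chain leaves the small ball rapidly by a Nash-type spectral estimate, with a time cost that is only polynomial in $n^{10\varepsilon}$.

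Inserting $r = n^{10\varepsilon}$ into the bound above and summing over $k \le N_n = O(n)$ gives
\[
\sum_{k=1}^{N_n} \mathbb{P}_{0,0}\bigl(\|D_k^\perp\|_2 \le n^{10\varepsilon}\bigr) \le n^{20\varepsilon} + C\!\!\sum_{k = \lceil n^{20\varepsilon}\rceil}^{O(n)}\Bigl(\frac{n^{10\varepsilon}}{\sqrt{k}}\Bigr)^{d-1},
\]
which is $O(n^{1/2 + 10\varepsilon})$ in the worst case $d = 2$ and strictly smaller for $d = 3, 4$, yielding the claimed bound $n^{1/2 + c\varepsilon}$. The main obstacle is the Gaussian density upper bound itself: the chain $D_k^\perp$ has a state-dependent transition kernel with no a priori reversibility, so classical heat-kernel tools do not apply off the shelf; the Fourier and Nash arguments must be made robust against the environment-induced coupling of the two walks, which is precisely where the short-range dependence of $\mathbf{P}$ and the decoupling in the far regime play the essential role.
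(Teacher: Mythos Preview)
Your approach is genuinely different from the paper's, and the outline is coherent up to the key step, but there is a real gap at the Gaussian density bound.

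The paper does not attempt any pointwise heat-kernel estimate on the chain $(D_k)$. Instead it proves a \emph{lower bound} on the separation probability: from any pair of starting points $U_1,U_2\in\mathcal{U}$, under $\mathbb{P}^K_{U_1,U_2}(\,\cdot\mid\mathcal{D}^\bullet=\infty)$, the two walks stay transversally at least $n^{10\varepsilon}$ apart on the whole range $[n^{50\varepsilon},n]$ with probability at least $c/n^{1/2+\varepsilon}$ (Lemma~\ref{lemma:SeparationSameEnv}). This is obtained by first proving it for walks in \emph{independent} environments---where the difference of regeneration points is an honest mean-zero random walk and one can invoke a Berger--Zeitouni ballot-type estimate---and then transferring to the same-environment measure by coupling on the separation event itself (Lemma~\ref{lemma:Technical}). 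A geometric-trials argument then shows that the number of ``returns to closeness'' up to level $n$ has mean $O(n^{1/2+\varepsilon})$, and each return contributes at most $n^{O(\varepsilon)}$ close joint regeneration levels on the good event $L_{n,\varepsilon}$.

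The gap in your proposal is precisely the bound $\mathbb{P}_{0,0}(\|D_k^\perp\|\le r)\le C(r/\sqrt{k})^{d-1}$. In the far regime your decoupling is essentially the content of Lemma~\ref{lemma:Technical}, so that part is sound. But in the near regime you invoke a ``Nash-type spectral estimate'' for a non-reversible chain with state-dependent kernel, and this is not justified: the exchangeability argument shows the drift $x\mapsto\mathbb{E}[D_{k+1}-D_k\mid D_k=x]$ is odd, but oddness does not rule out an \emph{attractive} component toward the origin for $x\neq 0$ (both walks being held by the same large edge, for instance), and any such attraction would invalidate a diffusive upper bound. Even absent attraction, Nash inequalities for non-reversible chains typically require a sector condition or an explicit invariant measure with controlled density, neither of which you have here. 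The paper's route exploits an asymmetry you are on the wrong side of: a \emph{lower} bound on escape probability needs only a finite-step ellipticity argument to push the walks far enough apart to decouple (the $\|U_1-U_2\|\le\Lambda$ case in the proof of Lemma~\ref{lemma:SeparationSameEnv}), whereas your \emph{upper} bound on return probability would need genuine dispersive control through the correlated regime, which is not supplied.
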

As we said earlier, Proposition \ref{prop:FewCrossing} is a direct consequence of the above result; this will be established after proving Proposition \ref{prop:CrossingJRL} and it will constitute the last result of this section.

\begin{remark1}
    We highlight that the results of Proposition~\ref{prop:FewCrossing} and Proposition~\ref{prop:CrossingJRL} are close to being sharp only in $d = 2$ and they could be improved in $d=3, 4$. However, the lowest dimension considered should be seen as the worst case (and most challenging) for our method of proof.
\end{remark1}

The proof of Proposition \ref{prop:CrossingJRL} requires some intermediate steps. We start by analysing the intersection behavior of two RWRC evolving in \textit{independent} environments. 

 We recall that the constant $\alpha>d+3$ was set above \eqref{eqn:RegenerationBox}, since we are in $d \le 4$ we can consider $\alpha  \le 8$. Recall also that $\vec{v}_{0} \coloneqq v/\|v\|$, where $v \coloneqq \mathbb{E}^K_{0}[X_{\tau_1} | D = \infty]$. Denote the other vectors in the orthonormal basis of $\mathbb{R}^d$ together with $\vec{v}_{0}$ by $\vec{u} = \vec{u}_0, \vec{u}_1, \dots, \vec{u}_{d-2}$ chosen such that $\vec{u}_i \cdot \vec{\ell} \ge 0$.

\begin{lemma}\label{lemma:SeparationIndependentV2}
For all $\varepsilon>0$, there exists a constant $c = c(\beta, \vec{\ell}, \lambda, d, K)>0$ such that, uniformly over all $U_1, U_2 \in \mathcal{U}$, we have
    \begin{equation*}
        Q^K_{U_1, U_2}\Bigg( \inf_{\substack{k_1 \colon |X^{(1)}_{k_1} \cdot \vec{\ell} - L|\le 1 \\ k_2 \colon |X^{(2)}_{k_2} \cdot \vec{\ell} - L|\le 1}} \left| \langle (X^{(1)}_{k_1} - X^{(2)}_{k_2}), \vec{u}\rangle \right| \ge n^{10\varepsilon} \text{ for all } L \in [n^{40\varepsilon}, n] \,\, \Big| \,\,D^{\otimes}=+\infty\Bigg) \ge \frac{c}{n^{1/2 + \varepsilon}}.
    \end{equation*}
\end{lemma}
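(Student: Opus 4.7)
The plan is to reduce the desired event to a one-dimensional persistence estimate for the transverse difference between the two walks, exploiting the independence of the environments under $Q^K_{U_1,U_2}$ and the individual regeneration structures of Theorem~\ref{theo:FKRegStruct}. Under $Q^K_{U_1,U_2}(\cdot\mid D^\otimes=\infty)$, the walks $X^{(1)}$, $X^{(2)}$ are independent and each admits i.i.d.\ regeneration increments $\xi_k^{(i)} := X^{(i)}_{\tau_k^{(i)}} - X^{(i)}_{\tau_{k-1}^{(i)}}$ (property~3 of Definition~\ref{def:RegTimesWellDef}, extended by translation invariance to starting points in $\mathcal U$) with mean $v = \|v\|\vec v_0$. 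Since $\vec u \perp \vec v_0$, the scalar process $\Delta_k := (X^{(1)}_{\tau_k^{(1)}} - X^{(2)}_{\tau_k^{(2)}}) \cdot \vec u$ is a centered random walk; choosing $K$ large enough, property~4 ensures that each increment has polynomial moments of every order, hence in particular a finite variance $2\sigma^2>0$.

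Set $c_v := v\cdot\vec\ell > 0$, $k_\star := \lfloor c_v^{-1} n^{40\varepsilon} \rfloor$, and $k^\dagger := \lceil 2 c_v^{-1} n \rceil$. By the law of large numbers for $X^{(i)}_{\tau_k^{(i)}}\cdot\vec\ell$, with high probability walk $i$ reaches level near $n^{40\varepsilon}$ after $k_\star$ regenerations and passes level $n$ by step $k^\dagger$. By the CLT applied to $\Delta_{k_\star}/\sqrt{k_\star}$, noting $\sqrt{k_\star} \asymp n^{20\varepsilon}$, there exists $p_0 > 0$, uniform in $U_1, U_2 \in \mathcal U$, with $Q^K_{U_1,U_2}(\Delta_{k_\star} \ge 4 n^{20\varepsilon} \mid D^\otimes=\infty) \ge p_0$. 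Conditionally on this initial separation, a classical one-sided exit estimate for centered random walks with finite variance (via Donsker's invariance principle and the Brownian bound $P_a(B_s \ge b\ \forall s \le T) \ge C(a-b)/\sqrt{T}$ valid for $0\le b<a$ in a suitable range) yields
\begin{equation*}
    Q^K_{U_1,U_2}(\Delta_k \ge 3 n^{10\varepsilon}\ \forall\, k_\star \le k \le k^\dagger \mid \Delta_{k_\star} \ge 4 n^{20\varepsilon}) \ge C\,\frac{n^{20\varepsilon}-n^{10\varepsilon}}{\sqrt{k^\dagger}} \ge C' n^{-1/2+20\varepsilon}.
\end{equation*}

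Finally, Lemma~\ref{lemma:GoodEventsBoxes} provides the good event (intersected over both walks) on which every regeneration box has diameter at most $n^\varepsilon$, with complement of probability $O(n^{-M})$. On this event, for any level $L \in [n^{40\varepsilon}, n]$ and any pair $k_1, k_2$ with $|X^{(i)}_{k_i}\cdot\vec\ell - L| \le 1$, the scalar $X^{(i)}_{k_i}\cdot\vec u$ lies within $O(n^\varepsilon)$ of $X^{(i)}_{\tau_{j_i}^{(i)}}\cdot\vec u$, where $\tau_{j_i}^{(i)}$ is the first regeneration of walk $i$ past level $L$ (the bounded number of regeneration boxes straddling level $L$ contributes at most $O(n^{2\varepsilon})$). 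Since $3 n^{10\varepsilon} - O(n^{2\varepsilon}) \ge n^{10\varepsilon}$ for $n$ large and $\varepsilon$ small, the event from the persistence step implies the event of the lemma, and combining everything yields a lower bound of order $n^{-1/2+20\varepsilon} \ge c\, n^{-1/2-\varepsilon}$. The main technical difficulty is making the one-sided persistence estimate rigorous in view of the non-i.i.d.\ first increment (coming from the starting points $U_i$ and the conditioning $D^\otimes=\infty$) and the uniformity in $U_1,U_2 \in \mathcal U$; both are handled by decomposing at time $\tau_1$, after which the increments are i.i.d., and invoking an invariance principle robust to a bounded-length burn-in, together with the fact that $\mathcal U$ is a finite set.
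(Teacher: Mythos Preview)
Your approach has a genuine gap in the final step, where you translate the persistence of the regeneration-indexed process $\Delta_k = (X^{(1)}_{\tau_k^{(1)}} - X^{(2)}_{\tau_k^{(2)}})\cdot\vec u$ back to the level-indexed event of the lemma. The event you need concerns $(X^{(1)}_{k_1} - X^{(2)}_{k_2})\cdot\vec u$ when \emph{both} walks are near level $L$; after applying your box-size control, this reduces to $(X^{(1)}_{\tau_{j_1(L)}} - X^{(2)}_{\tau_{j_2(L)}})\cdot\vec u$, where $j_i(L)$ is the regeneration index of walk $i$ at level $L$. These indices are in general \emph{different}, whereas your persistence event only controls $\Delta_k$ at a \emph{common} index $k$. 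Bridging this would require bounding $|(X^{(2)}_{\tau_{j_1}} - X^{(2)}_{\tau_{j_2}})\cdot\vec u|$, i.e.\ (i) controlling $|j_1(L) - j_2(L)|$ and (ii) bounding the $\vec u$-fluctuation of walk~2 over that window. By the CLT for $X^{(i)}_{\tau_k}\cdot\vec\ell$, the mismatch $|j_1 - j_2|$ is typically of order $\sqrt{L}$, hence up to order $\sqrt{n}$; over that many regenerations the $\vec u$-component of walk~2 fluctuates by order $n^{1/4}$, which completely swamps your fixed separation $3n^{10\varepsilon}$. Your sentence about ``the bounded number of regeneration boxes straddling level $L$'' only handles the within-walk discrepancy, not this cross-walk index mismatch.

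The paper's proof addresses exactly this by (a) using a persistence event in which the separation \emph{grows} with the index, namely $\tilde O_j \ge j^{1/2-\eta'}$ (obtained via \cite[Lemma~4.3]{Berger_Zeitouni}, which gives a lower bound of order $n^{-1/2-\delta}$ on the persistence event intersected with a sequence of good events), and (b) introducing auxiliary good events $C_j$ and $D_j$ that bound $|j_1(L) - j_2(L)|$ by $j^\kappa$ and the $\vec u$-fluctuation over such windows by $j^\eta$, with the exponents tuned so that $1/2 - \eta' > \eta$. A fixed-threshold persistence estimate of the type you invoke cannot substitute for this; you would at minimum need the separation to grow faster than $k^{1/4}$. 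As a minor point, your remark that $\mathcal U$ is finite is incorrect: $\mathcal U$ is a slab in $\mathbb{Z}^d$, so the uniformity over $U_1,U_2$ must come from translation invariance of the law, not from finiteness.
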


\begin{proof}
Here, we may use $\langle \cdot \,, \cdot \rangle$ for the inner product in $\mathbb{R}^d$ for a more explicit notation. Consider the unit vector $\vec{u}$ and define, for all $j \ge 0$, the quantity 
\begin{equation}\label{eqn:OrthogonalDistance}
    O_j \coloneqq \left\langle X^{(1)}_{\tau_j} - X^{(2)}_{\tau_j}, \vec{u} \right\rangle.
\end{equation}
In order to prove that the two random walks stay at a certain distance for a long time, we need to ensure that their regeneration points are far from each other and their regeneration boxes (recall the definition~\eqref{eqn:RegenerationBox}) are small with respect to the distance of the regeneration points. Let us now introduce these events formally. Fix some universal constant $\hat{c}>0$ (that we will adjust so that the events will have the desired probabilities for $j$ small). We set:
\begin{equation}\label{eqn:GoodEventsExploration}
\begin{split}
    B_j &\coloneqq \left\{ \chi^{(i)}_j \le \hat{c}j^\varepsilon; i = 1, 2 \right\},  \\
    C_j &\coloneqq \left\{ \left| \left\langle X^{(i)}_{\tau_j} - \mathbb{E}^K_{U_i} [ X^{(i)}_{\tau_j} | D = \infty] , \vec{v}_0 \right\rangle \right| < \hat{c}j^{\kappa}; i =1, 2 \right\}, \\
    D_{j} &\coloneqq \left\{ \max_{k \in \llbracket j-j^{\kappa}, j\rrbracket} \left| \left\langle X^{(i)}_{\tau_k} - X^{(i)}_{\tau_j} , \vec{u} \right\rangle \right| < \hat{c}j^{\eta} ; i =1, 2 \right\},\\
    G_j &\coloneqq \left\{ \max_{k = 0, \dots, d-2} \left| \left\langle X^{(i)}_{\tau_j} , \vec{u}_k \right\rangle \right|  < \hat{c}j^{\kappa} ; i =1, 2 \right\}.
    \end{split}
\end{equation}
Here $\kappa>1/2$, $\eta \in (\kappa/2, 1/2)$ and $\varepsilon>0$ is small at will. We know from \cite[Lemma 6.2]{Kious_Frib} that, for all $M>0$, there exits a $K_0>0$ such that for any $K \ge K_0$ the following statements hold:
\begin{itemize}
    \item $\mathbb{P}((B_j)^c) \le C j^{-M}$.
    \item $\mathbb{P}((C_j)^c) \le C j^{-M(2\kappa - 1)}$ since $\mathbb{E}_{0}[|\langle X^{(i)}_{\tau_2} - X^{(i)}_{\tau_1}, \vec{v}_0 \rangle|^{2M}] < \infty$, $\mathbb{E}^K_{0}[\langle X^{(i)}_{\tau_1} - \mathbb{E}^K_{U_i} [ X^{(i)}_{\tau_1} | D = \infty], \vec{v}_0 \rangle | D =\infty] = 0$ and applying Lemma~\ref{lemma:PolynomialBound}. 
    \item $\mathbb{P}((D_j)^c) \le C j^{-M(2\eta - \kappa)}$ since $\mathbb{E}_{0}[|\langle X^{(i)}_{\tau_2} - X^{(i)}_{\tau_1}, \vec{u} \rangle|^{2M}] < \infty$, $\mathbb{E}^K_{0}[\langle X^{(i)}_{\tau_1}, \vec{u} \rangle | D =\infty] = 0$ and applying Lemma~\ref{lemma:PolynomialBound}.
    \item $\mathbb{P}((G_j)^c) \le C j^{-M(2\kappa-1)}$ since $\mathbb{E}_{0}[|\langle X^{(i)}_{\tau_2} - X^{(i)}_{\tau_1} , \vec{u}_k \rangle|^{2M}] < \infty$,$\mathbb{E}^K_{0}[\langle X^{(i)}_{\tau_1}, \vec{u}_k \rangle | D =\infty] = 0$ for all $k = 0, \dots, d-1$ and applying Lemma~\ref{lemma:PolynomialBound}.
\end{itemize}
\noindent We can then choose $\kappa$ and $\eta$ close to $1/2$ such that $U_{j} \coloneqq B_j \cap C_j \cap D_j \cap G_j$ has probability $\mathbb{P}(U_j)\ge 1 - \tfrac{1}{j^{a}}$ with $a > 3/2$.

\begin{figure}[H]
    \centering
    \includegraphics[width=0.5\linewidth]{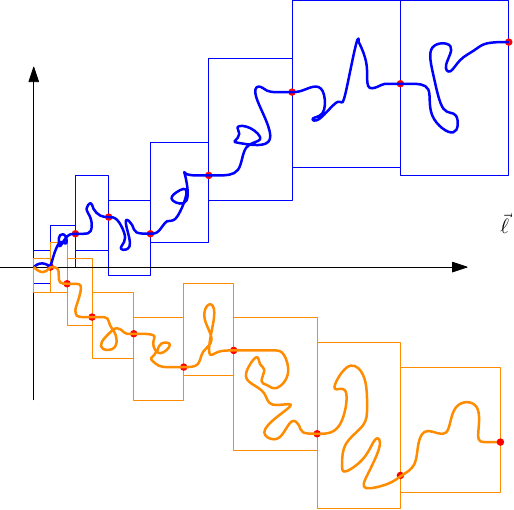}
    \caption{In blue the trajectory of $X^{(1)}$, in orange the trajectory of $X^{(2)}$ and in red the regeneration points. Here is the idea behind the definition of the events $A(n)$ and its components $B_j, C_j, D_j, G_j$. The regeneration boxes in which the two random walks evolve may get larger, fluctuate in their position and size but keep getting further apart from each other. }
    \label{fig:FigFar}
\end{figure}

We claim that, under $Q^K_{U_1, U_2}( \cdot | D^{\otimes}=+\infty)$, the process $(\widetilde{O}_j)_{j \ge 0} = (O_j - (U_1 - U_2)\cdot \vec{u})_{j \ge 0}$ is a random walk on $\mathbb{R}$ (it is a sum of i.i.d.\ mean $0$ random variables). Indeed, this is an immediate consequence of the fact that $(X^{(1)}_{\tau_j})_{j \ge 0}$ and $(X^{(2)}_{\tau_j})_{j \ge 0}$ are two independent and identically distributed random walks on $\mathbb{Z}^d$. We highlight that this is true under $Q^K_{U_1, U_2}( \cdot | D^{\otimes}=+\infty)$ and it would not be true for two walks evolving in the same environment. Moreover $\mathbb{E}^K_{0}[X^{(i)}_{\tau_1} - X^{(i)}_{\tau_0}] = \vec{v}$ and $\vec{v} \cdot \vec{u} = 0$.

It is easy to see that \cite[Lemma 6.2]{Kious_Frib} also implies that $\mathbb{E}[\widetilde{O}_j^{2M}] < \infty$ for $M$ large. Hence, we can apply \cite[Lemma~4.3]{Berger_Zeitouni} to the events $U_j$ and the random walk $\widetilde{O}_j$, in particular to the event $A(n) \coloneqq \{\text{For all }j<n, \widetilde{O}_j \ge \floor{j^{1/2 - \eta'}} \}$. This yields 
\begin{equation*}
    Q_{U_1, U_2}^K\left(\text{For all }j<n, \widetilde{O}_j \ge \floor{j^{1/2 - \eta'}} \text{ and } U_j \text{ occurs} \mid D^{\otimes}=+\infty \right) \ge \frac{C}{n^{1/2 + \delta}},
\end{equation*}
where $\eta' \in (0, 1/2-\eta)$ and $\delta>0$ can be chosen arbitrary small. 

\paragraph{Claim:} There exists a constant $c_1$, depending only on the parameters of the model, such that
\begin{equation}\label{eqn:EquationSeparationGivenEvent}
    A(c_1 n) \cap \bigcap_{j = 1}^{c_1 n} U_j \subseteq \Bigg\{\text{for all } L \in [n^{40\varepsilon}, n], \inf_{\substack{k_1 \colon |X^{(1)}_{k_1} \cdot \vec{\ell} - L|\le 1 \\ k_2 \colon |X^{(2)}_{k_2} \cdot \vec{\ell} - L|\le 1}} \left| \langle (X^{(1)}_{k_1} - X^{(2)}_{k_2}), \vec{u}\rangle \right| \ge n^{10\varepsilon}\Bigg\}.
\end{equation}
For a visual aid of how the separation is achieved see Figure~\ref{fig:FigFar} above.

We drop the dependence of the events on $\hat{c}$ as it is the same for all of them. Fix any level $40\varepsilon \le L \le n$. Let $j(L)$ be the index of the first (possibly unique) regeneration box intersecting level $L$ by either $X^{(1)}$ or $X^{(2)}$. Note that, on the event, $\cap_{j = 1}^n (B_j \cap C_j \cap G_j)$, in order to reach distance $L$ we need to use at least and at most $ c_2L \le j(L) \le c_3L$ regeneration times (with $c_2, c_3$ depending only on the parameters of the model). Furthermore, on $A(n)$ we have that at the $j$-th regeneration points the walks are at distance $\|X^{(1)}_{\tau_j} - X^{(2)}_{\tau_j}\| \ge j^{1/2 - \eta'}$. Applying this to $j(L)$ we see that the distance $|\langle X^{(1)}_{\tau_{j(L)}} - X^{(2)}_{\tau_{j(L)}}, \vec{u} \rangle |$ is at least
\[(c_2L)^{1/2 - \eta'} > c_4 n^{20\varepsilon - 40\varepsilon \eta'} \ge c_4 n^{18 \varepsilon }\]
for $\eta'$ chosen properly. Furthermore, we notice that on the events $\cap_{j = 1}^n B_j$ 
we have that the two associated regeneration boxes are at distance larger than $j(L)^{1/2 - \eta'} - 2c_{d}j(L)^{\alpha \varepsilon}$, where $c_d$ is an absolute constant depending only on the dimension. We also have that
\[j(L)^{1/2 - \eta'} - 2c_{d}j(L)^{\alpha \varepsilon} \ge n^{20\varepsilon - 40\varepsilon \eta'} - 2c_{d}j(c_3n)^{5\varepsilon} \ge c_5 n^{15 \varepsilon}.\]
We have two options:
\begin{itemize}
    \item $X^{(1)}_{\tau_{j(L)}}$ is \textbf{leading}, which means that $X^{(1)}_{\tau_{j(L)}} \cdot \vec{\ell} \ge X^{(2)}_{\tau_{j(L)}} \cdot \vec{\ell}$.
    \item $X^{(1)}_{\tau_{j(L)}}$ is \textbf{trailing}, which means that $X^{(1)}_{\tau_{j(L)}} \cdot \vec{\ell} < X^{(2)}_{\tau_{j(L)}} \cdot \vec{\ell}$.
\end{itemize}
These two situations are symmetric as the labelling of the two walks is arbitrary. Without loss of generality we analyze the first one.

We aim to show that the closest distance between a regeneration box of $X^{(2)}$ and the regeneration box associated with $\{X^{(1)}_{\tau_{j(L)} + k}\}_{k = 1}^{\tau_{j(L)+1}}$ is increasing in $L$; for clarity of exposition we write $j = j(L)$. 
We start by noticing that there exists a universal constant $c_6 = c(\lambda, \vec{\ell}, \vec{v}_0, d)$, depending only on the parameters of the model and $\vec{v}$ (which is not a parameter but is deterministic nonetheless) such that the box that crosses the level $L$ is associated to one point among $X^{(2)}_{\tau_j}, \dots ,X^{(2)}_{\tau_{j + 2c_6 j^{\kappa}}}$. Indeed, on the event $\cap_{j = 1}^n C_j$ we know that the distance between $(X^{(1)}_{\tau_j}$ and $X^{(2)}_{\tau_j})$ in the direction $\vec{\ell}$ is less than $j^{\kappa}$. Moreover, for each $t > 0$ it takes at most $c_6 t$ regeneration times to the walk to travel distance at least $t$ in the direction $\vec{\ell}$. Hence, all the boxes that happen after $X^{(2)}_{\tau_{j + 2c_6 j^{\kappa}}}$ are after the walk $X^{(2)}_{\tau_{j + 2c_6 j^{\kappa}}}$ has overcome the level $L$ (and by regenerating is not ``coming back''). 

Finally, for all $j$ large enough, on the event $\cap_{j = 1}^n D_j$ the regeneration point among $X^{(2)}_{\tau_j}, \dots ,X^{(2)}_{\tau_{j + 2c_6 j^{\kappa}}}$ which is the closest to $X^{(1)}_{\tau_j}$ has a distance bounded below by $j^{1/2 - \eta'} - c_7j^{\eta}$. Here, $c_7$ is another absolute constant depending only on the geometry of the problem and not on $j$ or $n$. In turn, this implies that the closest a regeneration box can get is $j^{1/2 - \eta'} - c_2j^{\eta} - 2 j^{\varepsilon \alpha} \ge c_8L^{1/2 - \eta'} \ge n^{15 \varepsilon}$. Notice, that all these bounds below are increasing in $j$ (and hence in $L$). This concludes the proof. 
\end{proof} 

Before proceeding we define the event
\begin{equation}\label{eqn:SeparationEvent}
    \mathrm{Sep}(n) \coloneqq \Bigg\{\text{for all } L \in [0, n], \inf_{\substack{k_1 \colon |X^{(1)}_{k_1} \cdot \vec{\ell} - L|\le 1 \\ k_2 \colon |X^{(2)}_{k_2} \cdot \vec{\ell} - L|\le 1}} \left| \langle (X^{(1)}_{k_1} - X^{(2)}_{k_2}), \vec{u}\rangle \right| \ge 3\Bigg\}.
\end{equation}
Observe that, on this event, the two walks do not see the same conductance in a large portion of the space.

\begin{lemma}\label{lemma:SeparationEventually}
For all $\varepsilon>0$, there exist two constants $\Lambda = \Lambda(\beta, \vec{\ell}, \lambda, d, K)$ $c = c(\beta, \vec{\ell}, \lambda, d, K)>0$ such that, uniformly over all $U_1, U_2 \in \mathcal{U}$ such that $\|(U_1 - U_2)\cdot \vec{u}\| \ge \Lambda$, we have
    \begin{equation*}
        Q^K_{U_1, U_2}\Bigg( \text{for all } L \in [n^{40\varepsilon}, n], \inf_{\substack{k_1 \colon |X^1_{k_1} \cdot \vec{\ell} - L|\le 1 \\ k_2 \colon |X^2_{k_2} \cdot \vec{\ell} - L|\le 1}} \left| \langle (X^1_{k_1} - X^2_{k_2}), \vec{u}\rangle \right| \ge n^{10\varepsilon} \text{ and }\mathrm{Sep}(n) \,\, \Big| \,\,D^{\otimes}=+\infty\Bigg) \ge \frac{c}{n^{1/2 + \varepsilon}}.
    \end{equation*}
 \end{lemma}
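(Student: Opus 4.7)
The point of departure is Lemma~\ref{lemma:SeparationIndependentV2}, which already yields the separation at all levels $L \in [n^{40\varepsilon}, n]$ with the claimed probability $\ge c/n^{1/2+\varepsilon}$. The new task is to upgrade to the full event $\mathrm{Sep}(n)$, which further requires $\vec{u}$-separation of at least $3$ at every $L \in [0, n^{40\varepsilon}]$. The hypothesis $\|(U_1-U_2)\cdot \vec{u}\| \ge \Lambda$, for a large but $n$-independent constant $\Lambda$, is introduced precisely to cover this low-level regime.

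I would rerun the proof of Lemma~\ref{lemma:SeparationIndependentV2} on the same good event $A(c_1 n) \cap \bigcap_{j=1}^{c_1 n} U_j$. The centered process $\widetilde{O}_j = O_j - (U_1-U_2)\cdot \vec{u}$ is exactly the random walk already used there, so on $A(c_1 n)$ one still has $\widetilde{O}_j \ge \lfloor j^{1/2-\eta'}\rfloor$. Up to replacing $\vec{u}$ by $-\vec{u}$, we may assume $(U_1-U_2)\cdot \vec{u} \ge \Lambda$, so that $O_j \ge \Lambda + \lfloor j^{1/2-\eta'}\rfloor$ for every $j \le c_1 n$.

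Now fix $L \in [0, n]$ and times $k_1, k_2$ with $|X^{(i)}_{k_i}\cdot \vec{\ell} - L|\le 1$; let $j_i$ be the index with $\tau_{j_i - 1}^{(i)} \le k_i \le \tau_{j_i}^{(i)}$, and set $j = \max(j_1, j_2)$. As in the previous proof, $C_j$ and $G_j$ force $|j_1 - j_2| \le \hat{c}\, j^{\kappa}$; $D_j$ yields $|\langle X^{(i)}_{\tau_{j_1}} - X^{(i)}_{\tau_{j_2}}, \vec{u}\rangle| \le \hat{c}\, j^{\eta}$; and $B_{j_1}, B_{j_2}$ bound each within-box $\vec{u}$-fluctuation by $\hat{c}\, j^{\alpha\varepsilon}$, since the regeneration boxes $\mathcal{B}(m, m^{\alpha})$ have orthogonal extent $m^\alpha$. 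Decomposing $X^{(1)}_{k_1} - X^{(2)}_{k_2}$ through $X^{(1)}_{\tau_{j_1}}, X^{(1)}_{\tau_{j}}, X^{(2)}_{\tau_{j}}, X^{(2)}_{\tau_{j_2}}$ and combining these bounds gives
\[
|\langle X^{(1)}_{k_1} - X^{(2)}_{k_2}, \vec{u}\rangle| \;\ge\; O_j - \hat{c}\, j^{\eta} - 2\hat{c}\, j^{\alpha\varepsilon} \;\ge\; \Lambda + \lfloor j^{1/2-\eta'}\rfloor - \hat{c}\, j^{\eta} - 2\hat{c}\, j^{\alpha\varepsilon}.
\]
Since $\eta' \in (0, 1/2-\eta)$ implies $1/2-\eta' > \eta$, and we can freely impose $\alpha\varepsilon < \eta$, the term $j^{1/2-\eta'}$ dominates the two corrections for all $j \ge j_0$, where $j_0$ depends only on the fixed exponents and $\hat{c}$. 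Taking $\Lambda \ge 3 + \hat{c}\, j_0^{\eta} + 2\hat{c}\, j_0^{\alpha\varepsilon}$ then makes the right-hand side at least $3$ also for $j \le j_0$. Hence $\mathrm{Sep}(n)$ holds on the same good event used in Lemma~\ref{lemma:SeparationIndependentV2}, and the probability bound $c/n^{1/2+\varepsilon}$ is inherited.

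The only delicate point is exponent bookkeeping: $\kappa, \eta, \eta', \varepsilon$ must simultaneously obey the constraints from Lemma~\ref{lemma:SeparationIndependentV2} ($\kappa > 1/2$, $\eta \in (\kappa/2, 1/2)$, $\eta' \in (0, 1/2-\eta)$) together with the new requirement $\alpha\varepsilon < \eta$. This is easily arranged, e.g.\ $\kappa = 0.6, \eta = 0.4, \eta' = 0.05$ and $\varepsilon < 0.4/\alpha$, so that $\Lambda$ depends only on the model parameters $\beta, \vec{\ell}, \lambda, d, K$ and not on $n$.
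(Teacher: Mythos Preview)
Your proposal is correct and follows essentially the same approach as the paper: both arguments work on the good event $A(c_1 n)\cap\bigcap_{j=1}^{c_1 n}U_j$ from Lemma~\ref{lemma:SeparationIndependentV2}, exploit the growing separation $O_j \ge \Lambda + \lfloor j^{1/2-\eta'}\rfloor$ to control levels beyond a fixed $n$-independent threshold, and use the initial gap $\Lambda$ to handle the small-$L$ regime. Your version is a bit more explicit (giving the unified lower bound $\Lambda + j^{1/2-\eta'} - \hat{c}j^{\eta} - 2\hat{c}j^{\alpha\varepsilon}$), whereas the paper packages the same estimates into an abstract nondecreasing function $f(L)$ and then picks $L_1$ with $\tilde{c}f(L_1)>3$; the content is the same.
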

 \begin{proof}
     The proof relies on the observations made in the proof of Lemma~\ref{lemma:SeparationIndependentV2}. Assume once again that the event
     \begin{equation*}
         A(c_1 n) \cap \bigcap_{j = 1}^{c_1 n} U_j
     \end{equation*}
     holds. 
     We claim that, on this event, for all $L>0$, there exists a non-decreasing function $f(L)\colon f(L) \to \infty$ as $L\to \infty$ and a constant $\tilde{c}>0$ (depending only on the parameters) such that 
     \begin{equation}\label{eqn:FarAlways}
         \inf_{\substack{k_1 \colon |X^{(1)}_{k_1} \cdot \vec{\ell} - L|\le 1 \\ k_2 \colon |X^{(2)}_{k_2} \cdot \vec{\ell} - L|\le 1}} \left| \langle (X^{(1)}_{k_1} - X^{(2)}_{k_2}), \vec{u}\rangle \right| \ge \tilde{c}f(L).
     \end{equation}
     Before proving this claim let us show the main result. Indeed, if \eqref{eqn:FarAlways} happens then there must exist $L_1 \colon \tilde{c} f(L_1)>3$. Furthermore, we know that to reach such $L_1$ the two walks take a number of steps (regeneration times) of the order $L_1$. Hence their orthogonal movement is at most $L_1^{\eta}$. Thus, if we choose the starting points $U_1, U_2$ at an initial distance $\|(U_1 - U_2)\cdot \vec{u}\| \ge \Lambda$, they will not cross if $\Lambda$ is large enough. We note that this choice depends only on $L_1$, whose choice depends only on the parameters $\beta, \vec{\ell}, \lambda, d, K$. Finally, under $A(c_1 n)$ we know that the two walks regenerate after reaching distance $n$; this fact rules out any possible ``crossing'' in $[0, n]$ from regeneration boxes that happen in the future.

     We now turn to showing the claim; note that we only need to prove it for $L$ large enough (independent of $n$) as the constant $\tilde{c}$ can be chosen small enough to ensure that the statement hold for small $L$. Fix any level $L$. As noted before, one can define $j(L)$ to be the smallest index associated with a regeneration box that intersects level $L$ (either by $X^{(1)}$ or $X^{(2)}$). Recall that there exist constants $c_2, c_3 >0$, not depending on $L$, such that for all $L \in [0, n]$ we have $c_2L \le j(L)\le c_3L$. Without loss of generality we assume that $X^{(1)}$ is leading. Before proceeding, let us first recall that in the proof of Lemma~\ref{lemma:SeparationIndependentV2} we established that:
     \begin{enumerate}
         \item $|\langle X^{(1)}_{\tau_{j(L)}} - X^{(2)}_{\tau_{j(L)}}, \vec{u} \rangle |$ is at least $(c_2L)^{1/2 - \eta'}$.
         \item The box of $X^{(2)}$ that crosses the level $L$ is associated to one point among $X^{(2)}_{\tau^2_j}, \dots ,X^{(2)}_{\tau^2_{j + 2c_6 j^{\kappa}}}$
         \item The regeneration point among $X^{(2)}_{\tau_j}, \dots ,X^{(2)}_{\tau_{j + 2c_6 j^{\kappa}}}$ which is the closest to $X^{(1)}_{\tau_j}$ has a distance bounded below by $j^{1/2 - \eta'} - c_7j^{\eta}$.
     \end{enumerate}
     We note that all these bounds are increasing in $j$ and hence in $L$. This proves the claim.
 \end{proof}

 The last lemma is crucial to compare the separation event under $Q^K_{U_1, U_2}$ and under $\mathbb{P}^K_{U_1, U_2}$. In order to do so we prove a slight modification of a result given in \cite[Proposition~4.6]{QuenchedBiasedRWRC}. We start by defining
 \begin{equation}
     \begin{split}
         \mathcal{F}_{n} &\coloneqq \sigma\left(\{\{X^{(1)}\}_{0}^{T_n},\{X^{(2)}\}_{0}^{T_n}\}\cup \left\{ c^*(e) \colon e \in \mathcal{E}_{\{X^{(1)}\}_{0}^{T_n}} \cup \mathcal{E}_{\{X^{(2)}\}_{0}^{T_n}} \right\}  \right);
     \end{split}
 \end{equation}
 this is the sigma field generated by the two random walks before hitting a certain distance in space. We are not stressing it in the notation, but we recall that $T_n^{(1)} \neq T_{n}^{(2)}$.
 \begin{lemma}\label{lemma:Technical}
     For any positive function $f$ measurable with respect to $\mathcal{F}_{n}$ and any $U_1, U_2 \in \mathbb{Z}^d$ we have
     \begin{equation*}
         \mathbb{E}^K_{U_1, U_2} \left[f \mathds{1}_{\{\mathrm{Sep(n)}\}}\right] = \mathbb{E}^{Q^K}_{U_1, U_2} \left[f \mathds{1}_{\{\mathrm{Sep(n)}\}}\right].
     \end{equation*}
 \end{lemma}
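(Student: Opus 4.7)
The plan is to exploit the product structure of $\mathbf{P}^K$ together with the fact that, on $\mathrm{Sep}(n)$, the two random walks traverse disjoint regions of $\mathbb{Z}^d$ before crossing level $n$. Under $\mathbb{P}^K_{U_1,U_2}$ the two walks share a single environment, whereas under $Q^K_{U_1,U_2}$ they evolve in independent copies, but if the conductances the two walks actually read are indexed by disjoint subsets of $E(\mathbb{Z}^d)$, then by independence of conductances across distinct edges the two points of view become indistinguishable. The whole proof therefore reduces to (i) a geometric claim that on $\mathrm{Sep}(n)$ the edge-neighbourhoods $\mathcal{E}_{\{X^{(1)}\}_0^{T_n}}$ and $\mathcal{E}_{\{X^{(2)}\}_0^{T_n}}$ are disjoint, and (ii) a routine Fubini argument.

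For the geometric step I would argue by contradiction. Suppose some $X^{(1)}_{k_1}$ (with $0\le k_1\le T_n^{(1)}$) lies at graph-distance at most $2$ from some $X^{(2)}_{k_2}$ (with $0\le k_2\le T_n^{(2)}$). Their $\vec{\ell}$-projections then differ by at most $2$, and since up to the corresponding hitting times both walks stay below level $n+1$, one can pick $L\in[0,n]$ with $|X^{(i)}_{k_i}\cdot\vec{\ell}-L|\le 1$ for $i=1,2$; the definition of $\mathrm{Sep}(n)$ then forces $|\langle X^{(1)}_{k_1}-X^{(2)}_{k_2},\vec{u}\rangle|\ge 3$, incompatible with graph-distance $\le 2$. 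Since every edge in $\mathcal{E}_x$ has both endpoints within graph-distance $1$ of $x$, trajectory vertices at graph-distance strictly larger than $2$ cannot contribute a common edge, which yields the desired disjointness $\mathcal{E}_{\{X^{(1)}\}_0^{T_n}}\cap \mathcal{E}_{\{X^{(2)}\}_0^{T_n}}=\emptyset$.

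Once this is in hand, I would decompose both sides over fixed pairs of admissible trajectories $(x^{(1)}_\cdot,x^{(2)}_\cdot)$ on which $\mathrm{Sep}(n)$ holds. For any such pair, the $\mathcal{F}_n$-measurable function $f$ and the quenched weights $P^{\omega,K}_{U_1}(x^{(1)}_\cdot)$, $P^{\omega,K}_{U_2}(x^{(2)}_\cdot)$ all depend only on the conductances on the two (now disjoint) edge-sets $E_1,E_2$; since $\mathbf{P}^K$ is a product over edges, the integral against $\mathbf{P}^K$ splits into the product of integrals against $\mu^{\otimes E_1}$ and $\mu^{\otimes E_2}$, which is precisely what one obtains by integrating against $\mathbf{P}^K\otimes\mathbf{P}^K$ as in the definition of $Q^K_{U_1,U_2}$. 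Summing over the admissible trajectory pairs gives the claimed identity. The main obstacle is not probabilistic but rather the geometric bookkeeping in the first step: one must verify that the slack of $3$ in the definition of $\mathrm{Sep}(n)$ comfortably absorbs both the one-step extension implicit in $\mathcal{E}_x$ and the fact that the terminal vertex $X^{(i)}_{T_n^{(i)}}$ sits just above level $n$, strictly outside the range $[0,n]$ in which $\mathrm{Sep}(n)$ enforces separation.
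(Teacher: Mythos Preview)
Your proposal is correct and follows essentially the same route as the paper: decompose over fixed pairs of finite paths, use that on $\mathrm{Sep}(n)$ the edge-neighbourhoods of the two trajectories are disjoint, and then factor the $\mathbf{P}^K$-integral by the product structure of the environment. You supply slightly more detail on the geometric step (the contradiction argument showing that graph-distance $\le 2$ between trajectory vertices would violate the separation at some level $L\in[0,n]$), which the paper leaves implicit by simply restricting to path pairs whose vertices are mutually at distance $\ge 3$ and then passing to monotone limits.
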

 \begin{proof}
     Firstly, we consider all finite paths $\{\pi^{(1)}(k)\}_{k = 0}^{m_1}$ with $m_1$ arbitrary but finite, starting from $U_1$ and ending in $\mathcal{H}^+(n)$. More specifically, $\pi^{(1)}(k)$ is constructed such that $\pi^{(1)}(1) = U_1$ and there exists a finite $m_1 > 0$ such that $\pi^{(1)}(m_1) \coloneqq \inf\{k\ge 0\colon \pi^{(1)}(k) \cdot \vec{\ell} > n\}$. We similarly define a corresponding path $\pi^{(2)}$, which we associate to $X^{(2)}$. We compute the probability that $X^{(1)}$ and $X^{(2)}$ follow such paths.
     
     Note that we can restrict to those paths by the fact that the random walk is almost surely transient in the direction $\vec{\ell}$. Furthermore, $\pi^{(1)}$ and $\pi^{(2)}$ are chosen so that none of the vertices along them is at a distance less than $3$ from any of the vertices in the other path.
     
     We restrict to all functions
     \[f = \mathds{1}_{\{X^{(1)}\}_{0}^{m_1} = \{\pi^{(1)}\}_{0}^{m_1}} g_1 \mathds{1}_{\{X^{(2)}\}_{0}^{m_2} = \{\pi^{(2)}\}_{0}^{m_2}} g_2 \]
     where $g_1, g_2$ are measurable with respect to
     \[\sigma\left(\left\{ c^*(e) \colon e \in \mathcal{E}_{\{\pi^{(1)}\}_{0}^{m_1}} \right\}  \right) \text{ and } \sigma\left(\left\{ c^*(e) \colon e \in \mathcal{E}_{\{\pi^{(2)}\}_{0}^{m_2}} \right\}  \right),\]
     respectively. Since $\mathcal{E}_{\{\pi^{(1)}\}_{0}^{m_1}} \cap \mathcal{E}_{\{\pi^{(2)}\}_{0}^{m_2}} = \emptyset$ and using the independence of the conductances we can write
     \begin{align*}
         \mathbb{E}^K_{U_1, U_2}\left[ f \right] &= \mathbf{E}\left[E^\omega_{U_1}\left[\mathds{1}_{\{X^{(1)}\}_{0}^{m_1} = \{\pi^{(1)}\}_{0}^{m_1}} g_1\right] E^\omega_{U_2}\left[\mathds{1}_{\{X^{(2)}\}_{0}^{m_2} = \{\pi^{(2)}\}_{0}^{m_2}} g_2\right]\right]\\
         &= \mathbf{E}\left[E^\omega_{U_1}\left[\mathds{1}_{\{X^{(1)}\}_{0}^{m_1} = \{\pi^{(1)}\}_{0}^{m_1}} g_1\right] \right] \mathbf{E}\left[E^\omega_{U_2}\left[\mathds{1}_{\{X^{(2)}\}_{0}^{m_2} = \{\pi^{(2)}\}_{0}^{m_2}} g_2\right]\right]\\
         &= \mathbb{E}^{Q^K}_{U_1, U_2} \left[f\right].
     \end{align*}
     This is enough to conclude since, in the general case, $f\mathds{1}_{\{\mathrm{Sep(n)}\}}$ can be written as an increasing limit of linear combinations of functions of this type.
 \end{proof}

\begin{lemma}\label{lemma:SeparationSameEnv}
For all $\varepsilon>0$, there exists a constant $c = c(\beta, \vec{\ell}, \lambda, d, K)>0$ such that, uniformly over all $U_1, U_2 \in \mathcal{U}$, we have
     \begin{equation*}
        \mathbb{P}^K_{U_1, U_2}\Bigg( \text{for all } L \in [n^{50\varepsilon}, n], \inf_{\substack{k_1 \colon |X^{(1)}_{k_1} \cdot \vec{\ell} - L|\le 1 \\ k_2 \colon |X^{(2)}_{k_2} \cdot \vec{\ell} - L|\le 1}} \left| \langle (X^{(1)}_{k_1} - X^{(2)}_{k_2}), \vec{u}\rangle \right| \ge n^{10\varepsilon} \,\, \Big| \,\,\mathcal{D}^{\bullet}=+\infty\Bigg) \ge \frac{c}{n^{1/2 + \varepsilon}}
     \end{equation*}
 \end{lemma}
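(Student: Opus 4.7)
The plan is to reduce Lemma~\ref{lemma:SeparationSameEnv} to Lemma~\ref{lemma:SeparationEventually} via the decoupling afforded by Lemma~\ref{lemma:Technical}. The bridge is the event $\mathrm{Sep}(n)$: as soon as the two walks have kept a $\vec{u}$-distance of at least $3$ throughout all levels up to $n$, their edge-neighbourhoods are disjoint and the environments they inspect become genuinely independent. This is exactly what Lemma~\ref{lemma:Technical} encodes, and it is the reason Lemma~\ref{lemma:SeparationEventually} was designed to deliver $\mathrm{Sep}(n)$ as part of its conclusion.

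Two complications must be resolved. First, Lemma~\ref{lemma:SeparationEventually} requires the walks to start at $\vec{u}$-distance at least $\Lambda$, whereas here $U_1, U_2 \in \mathcal{U}$ may coincide. I would first reduce to the separated case by a Markov-at-regeneration step. Using the polynomial tail of $\mathcal{L}_1$ from Definition~\ref{def:JointRegLevWellDef}(4) together with a short explicit initial trajectory (available thanks to $K$-openness, Remark~\ref{rmk:KOpen}), one establishes a uniform lower bound, independent of $n$ and of $(U_1, U_2) \in \mathcal{U}$, on the probability of the event $A \coloneqq \{\mathcal{L}_1 \le C_0,\ |\langle X^{(1)}_{T_{\mathcal{L}_1}} - X^{(2)}_{T_{\mathcal{L}_1}}, \vec{u}\rangle| \ge \Lambda\}$. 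Invoking the Markov property~\eqref{eqn:MarkovJRL} at $\mathcal{L}_1$ then shifts the problem to starting pairs $(U_1', U_2')$ with $|\langle U_1' - U_2', \vec{u}\rangle| \ge \Lambda$, at the negligible cost of replacing $n$ by $n - C_0$ -- easily absorbed by the gap between the exponents $n^{40\varepsilon}$ (input of Lemma~\ref{lemma:SeparationEventually}) and $n^{50\varepsilon}$ (output here).

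For such a $\Lambda$-separated pair $(U_1', U_2')$, writing $E(n)$ for the separation event of the statement, $E(n) \cap \mathrm{Sep}(n)$ is $\mathcal{F}_n$-measurable, so Lemma~\ref{lemma:Technical} gives
\begin{equation*}
\mathbb{P}^K_{U_1', U_2'}(E(n) \cap \mathrm{Sep}(n)) \;=\; Q^K_{U_1', U_2'}(E(n) \cap \mathrm{Sep}(n)) \;\ge\; \eta^2 \, Q^K_{U_1', U_2'}(E(n) \cap \mathrm{Sep}(n) \mid D^{\otimes} = \infty),
\end{equation*}
and Lemma~\ref{lemma:SeparationEventually} bounds the last conditional probability below by $c\, n^{-(1/2+\varepsilon)}$.

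The last, and most delicate, step is to convert this \emph{unconditional} lower bound into the bound conditional on $\{\mathcal{D}^\bullet = \infty\}$. The tail nature of $\mathcal{D}^\bullet = \infty$ prevents a direct application of Lemma~\ref{lemma:Technical}. I would perform a Markov restart: on $E(n) \cap \mathrm{Sep}(n)$ the first joint regeneration level above $n$ satisfies $\mathcal{L}_{m^\ast} \le n + n^{\varepsilon}$ with probability $1 - O(n^{-M})$ by Definition~\ref{def:JointRegLevWellDef}(4) and Lemma~\ref{lemma:GoodEventsBoxes}, and by~\eqref{eqn:MarkovJRL} applied at $\mathcal{L}_{m^\ast}$ the future satisfies the relevant non-backtracking condition with probability at least $\eta$. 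Combined with $\mathbb{P}^K_{U_1', U_2'}(\mathcal{D}^\bullet = \infty) \le 1$, this yields the desired conditional lower bound. The main obstacle is precisely this final stitching: one must isolate cleanly the $\mathcal{F}_n$-measurable piece of the event (where Lemma~\ref{lemma:Technical} is applied) from the Markov restart at $\mathcal{L}_{m^\ast}$ so that the global event $\{\mathcal{D}^\bullet = \infty\}$ is recovered, a manipulation of the joint regeneration structure in the same spirit as the arguments already performed in Section~\ref{sect:2} and in the proof of Lemma~\ref{lemma:SeparationEventually}.
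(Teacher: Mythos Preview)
Your reduction via Lemma~\ref{lemma:Technical} and Lemma~\ref{lemma:SeparationEventually}, as well as your treatment of the close-starting case by a finite initial path to gain $\vec u$-separation $\ge \Lambda$, are exactly the paper's strategy. The gap is in your ``last, and most delicate, step''. Your Markov restart at $\mathcal{L}_{m^\ast}$ only controls the \emph{future}: it yields $\theta_{\rho_{m^\ast}}\mathcal{D}^\bullet=\infty$ with probability $\ge \eta$. But $\{\mathcal{D}^\bullet=\infty\}$, measured from the starting points $(U_1',U_2')$, also requires that no $\mathbf{BACK}/\mathbf{ORI}$ event occurred in the \emph{past}, i.e.\ before the walks reached level $n$. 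The $\mathcal{F}_n$-measurable event $E(n)\cap\mathrm{Sep}(n)$ does not preclude either walk from having backtracked to its starting level during $[0,T_n^{(i)}]$; that information was carried by the conditioning $\{D^\otimes=\infty\}$ under $Q^K$, and is precisely what gets lost when you pass through Lemma~\ref{lemma:Technical}. So from $\mathbb{P}^K_{U_1',U_2'}(E(n)\cap\mathrm{Sep}(n))\ge c\,n^{-(1/2+\varepsilon)}$ together with a restart at $\mathcal{L}_{m^\ast}$ you cannot conclude $\mathbb{P}^K_{U_1',U_2'}(E(n),\,\mathcal{D}^\bullet=\infty)\ge c'\,n^{-(1/2+\varepsilon)}$.

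The paper closes this gap differently: instead of restarting, it \emph{augments} the event passed through Lemma~\ref{lemma:Technical} by the $\mathcal{F}_n$-measurable proxy $\{M\ge n\}$ (see \eqref{definition_M_other}), which records that no joint $\mathbf{BACK}/\mathbf{ORI}$ failure has occurred up to level $n$. On $\mathrm{Sep}(n)$ one has $\{D^\otimes=\infty\}\subset\{M\ge n\}$, so Lemma~\ref{lemma:SeparationEventually} still delivers the lower bound $Q^K_{U_1',U_2'}(E(n),\mathrm{Sep}(n),M\ge n)\ge c\,n^{-(1/2+\varepsilon)}$, and Lemma~\ref{lemma:Technical} transfers it verbatim to $\mathbb{P}^K$. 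The conversion from $\{M\ge n\}$ to $\{\mathcal{D}^\bullet=\infty\}$ is then a one-line subtraction using the exponential estimate $\mathbb{P}^K_{U_1',U_2'}(M\ge n,\,\mathcal{D}^\bullet<\infty)\le Ce^{-cn}$ from \cite[Lemma~3.19]{QuenchedBiasedRWRC}. This is the missing ingredient in your sketch: you need an $\mathcal{F}_n$-measurable surrogate for ``no failure in the past'' that survives Lemma~\ref{lemma:Technical} and is then upgraded to the tail event via an external input, not a Markov restart.
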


\begin{proof}[Proof of Lemma~\ref{lemma:SeparationSameEnv}]
    We will need to consider two cases. The first and easier one is when $U_1, U_2 \in \mathcal{U}$ are such that $\|(U_1 - U_2)\cdot \vec{u}\| \ge \Lambda$. Then the result follows from an almost straightforward application of Lemmas~\ref{lemma:SeparationEventually} and~\ref{lemma:Technical}. The only slight issue is associated with the conditioning. To circumvent this we recall the definition of $M$, given in the appendix at \eqref{definition_M_other}, and note that
    \begin{equation*}
        \{M \ge n\} \cap \mathrm{Sep}(n) \in \mathcal{F}_n.
    \end{equation*}
    Hence, observing that $\{D^\otimes = \infty\} \cap \mathrm{Sep}(n) \subset \{M \ge n\} \cap \mathrm{Sep}(n)$, by Lemma~\ref{lemma:Technical}
    \begin{align*}
         \mathbb{P}^K_{U_1, U_2}\Bigg( &\text{for all } L \in [n^{40\varepsilon}, n], \inf_{\substack{k_1 \colon |X^{(1)}_{k_1} \cdot \vec{v} - L|\le 1 \\ k_2 \colon |X^{(2)}_{k_2} \cdot \vec{v} - L|\le 1}} \left| \langle (X^{(1)}_{k_1} - X^{(2)}_{k_2}), \vec{u}\rangle \right| \ge n^{10\varepsilon} , M \ge n, \mathrm{Sep}(n) \Bigg) \\
         &= Q^K_{U_1, U_2}\Bigg( \text{for all } L \in [n^{40\varepsilon}, n], \inf_{\substack{k_1 \colon |X^{(1)}_{k_1} \cdot \vec{v} - L|\le 1 \\ k_2 \colon |X^{(2)}_{k_2} \cdot \vec{v} - L|\le 1}} \left| \langle (X^{(1)}_{k_1} - X^{(2)}_{k_2}), \vec{u}\rangle \right| \ge n^{10\varepsilon} , \mathrm{Sep}(n), M \ge n \Bigg) \\
         & \ge \frac{c}{n^{1/2+\varepsilon}},
     \end{align*}
     as $Q^K_{U_1, U_2}(D^\otimes = \infty) \ge \eta > 0$. However, we observe that by \cite[Lemma~3.19]{QuenchedBiasedRWRC} we have
     \begin{equation*}
         \mathbb{P}^K_{U_1, U_2}\Bigg( M \ge n, \mathcal{D}^\bullet < \infty \Bigg) \le Ce^{-cn}.
     \end{equation*}
     We also recall the fact stated in Theorem~\ref{theo:FLSRegStruct} that $\mathbb{P}^K_{U_1, U_2}(\mathcal{D}^\bullet = \infty) \ge \eta > 0$. To conclude one can apply the standard manipulation $P(A | B) \ge P(A , B) \ge P(A) - P(A \cap B^c)$ for any two events $A, B$.

    Now we restrict to the case where $U_1, U_2 \in \mathcal{U}$ satisfy $\|(U_1 - U_2)\cdot \vec{u}\| \le \Lambda$. Then we can proceed by defining two paths $\pi^{(1)}$ and $\pi^{(2)}$ that start from $U_1$ and $U_2$ respectively and get at distance $\Lambda$, stay compatible with the event $\{\mathcal{D}^\bullet = \infty\}$ and end at a potential joint regeneration level on two points $V_1, V_2$. Furthermore, we ask all the vertices along both paths to be $K$-open. We note that
    \begin{align*}
        \mathbb{P}^K_{U_1, U_2}\Bigg( \text{for all } L &\in [n^{50\varepsilon}, n], \inf_{\substack{k_1 \colon |X^{(1)}_{k_1} \cdot \vec{\ell} - L|\le 1 \\ k_2 \colon |X^{(2)}_{k_2} \cdot \vec{\ell} - L|\le 1}} \left| \langle (X^{(1)}_{k_1} - X^{(2)}_{k_2}), \vec{u}\rangle \right| \ge n^{10\varepsilon} \,\, \Big| \,\,\mathcal{D}^{\bullet}=+\infty\Bigg) \\
        &\ge \mathbb{P}^K_{U_1, U_2}\left( \{X^{(1)}\} = \pi^{(1)}, \{X^{(2)}\} = \pi^{(2)}, \theta_{T^{(1)}_{V_1}, T^{(2)}_{V_2}} \circ \mathcal{D}^\bullet = \infty \right) 
        \\ \times &\mathbb{P}^K_{V_1, V_2}\Bigg( \text{for all } L \in [n^{40\varepsilon}, n], \inf_{\substack{k_1 \colon |X^{(1)}_{k_1} \cdot \vec{\ell} - L|\le 1 \\ k_2 \colon |X^{(2)}_{k_2} \cdot \vec{\ell} - L|\le 1}} \left| \langle (X^{(1)}_{k_1} - X^{(2)}_{k_2}), \vec{u}\rangle \right| \ge n^{10\varepsilon} \,\, \Big| \,\,\mathcal{D}^{\bullet}=+\infty\Bigg)\\
        &\ge c(K, \Lambda) \frac{c}{n^{1/2+\varepsilon}}.
     \end{align*}
     Note that we have used that $\Lambda>0$ is a finite constant which implies that
     \begin{equation*}
         \mathbb{P}^K_{U_1, U_2}\left( \{X^{(1)}\} = \pi^{(1)}, \{X^{(2)}\} = \pi^{(2)}, \theta_{T^{(1)}_{V_1}, T^{(2)}_{V_2}} \circ \mathcal{D}^\bullet = \infty \right) \ge c(K, \Lambda)>0. 
     \end{equation*}
     Figure~\ref{fig:FigLambda} clarifies the construction of the paths $\pi^{(1)}$ and $\pi^{(2)}$ and the events involved.
     \begin{figure}[H]
    \centering
    \includegraphics[width=0.7\linewidth]{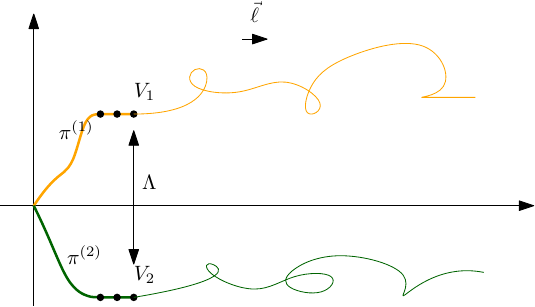}
    \caption{The thick lines are the two paths $\pi^{(1)}$ and $\pi^{(2)}$, note that we ask all the points along these paths to be $K$-open and ask the random walks to follow these (finite) paths. Then upon reaching $V_1$ and $V_2$ a joint regeneration happens with positive probability.}
    \label{fig:FigLambda}
\end{figure}
     Moreover, by proceeding as we did at the beginning of the proof
     \begin{align*}
         \mathbb{P}^K_{V_1, V_2}\Bigg( &\text{for all } L \in [n^{40\varepsilon}, n], \inf_{\substack{k_1 \colon |X^{(1)}_{k_1} \cdot \vec{v} - L|\le 1 \\ k_2 \colon |X^{(2)}_{k_2} \cdot \vec{v} - L|\le 1}} \left| \langle (X^{(1)}_{k_1} - X^{(2)}_{k_2}), \vec{u}\rangle \right| \ge n^{10\varepsilon} \,\, \Big| \,\,\mathcal{D}^{\bullet}=+\infty\Bigg) \\
         &\ge c Q^K_{V_1, V_2}\Bigg( \text{for all } L \in [n^{40\varepsilon}, n], \inf_{\substack{k_1 \colon |X^{(1)}_{k_1} \cdot \vec{v} - L|\le 1 \\ k_2 \colon |X^{(2)}_{k_2} \cdot \vec{v} - L|\le 1}} \left| \langle (X^{(1)}_{k_1} - X^{(2)}_{k_2}), \vec{u}\rangle \right| \ge n^{10\varepsilon}  \text{ and }\mathrm{Sep}(n) \,\, \Big| \,\,D^\otimes =+\infty\Bigg)\\
         &\ge \frac{c}{n^{1/2+\varepsilon}},
     \end{align*}
     completing the proof.
\end{proof}

\begin{proof}[Proof of Proposition~\ref{prop:CrossingJRL}]
To prove Proposition~\ref{prop:CrossingJRL} we argue along the lines of \cite{Berger_Zeitouni}. Let us first define some useful quantities. Firstly, we set 
\begin{equation*}
    M_1 \coloneqq \inf\Bigg\{L \ge n^{50\varepsilon} \colon \inf_{\substack{k_1 \colon |X^{(1)}_{k_1} \cdot \vec{\ell} - L|\le 1 \\ k_2 \colon |X^{(2)}_{k_2} \cdot \vec{\ell} - L|\le 1}} \left| \langle (X^{(1)}_{k_1} - X^{(2)}_{k_2}), \vec{u}\rangle \right| \le n^{10\varepsilon} \Bigg\}.
\end{equation*}
Let us also define a sequences of random variables $\{\psi_{n}\}_{n \in \mathbb{N}}$ and $\{\vartheta_{n}\}_{n \in \mathbb{N}}$ inductively. Let us fix $\vartheta_0 \coloneqq  0$ and $\psi_1 \coloneqq  \mathcal{L}_1$. Then, for $k \ge 1$
\begin{equation}\label{eqn:SeparatingJRL}
    \vartheta_k \coloneqq \theta_{T^{(1)}_{\psi_k}, T^{(2)}_{\psi_k}} \circ M_1, \quad\quad \text{and} \quad\quad \psi_{k+1} \coloneqq \inf \left\{ \mathcal{L}_{j} \colon \mathcal{L}_{j}> \vartheta_k \right\}.
\end{equation}
Furthermore, we define $J_k \coloneqq \vartheta_k - \psi_k$ and set 
\begin{equation}\label{eqn:NumberOfAttemptsJRL}
    \text{Cross}(n) \coloneqq \min\left\{ m \in \mathbb{N} \colon \sum_{i = 1}^m j_i > n \right\}.
\end{equation}
It follows from the definitions of these quantities that 
\begin{equation*}
    \mathrm{JRL}^\le(n, \varepsilon) \subseteq \mathrm{JRL}^{\mathrm{app}} \coloneqq \left\{ k \colon \mathcal{L}_{k} \in \bigcup_{i =1}^\infty [\vartheta_i, \psi_{i+1} + n^{50\varepsilon}], \mathcal{L}_{k} \le n \right\}.
\end{equation*}
Furthermore, on the events defined in \eqref{eqn:UniformBoundBoxes} and \eqref{eqn:UniformBoundJointSlabs}, we have that 
\begin{equation*}
    |\mathrm{JRL}^{\mathrm{app}}| \le n^{60 \varepsilon} |\{k \colon \vartheta_k \le n\}| \le n^{60 \varepsilon}\text{Cross}(n).
\end{equation*}
An immediate consequence of this fact and of the fact that $\mathbb{P}(F_{n, \varepsilon} \cap L_{n, \varepsilon}) \ge 1 - Cn^{-M}$ is that
\begin{equation*}
    \mathbb{E}_{0, 0}\left[|\mathrm{JRL}^\le(n, \varepsilon)| \right] \le 4n^{60\varepsilon} \mathbb{E}_{0, 0}\left[ \text{Cross}(n) \right] + Cn^{-M} n^{d\alpha} \le 8n^{60\varepsilon} \mathbb{E}_{0, 0}\left[ \text{Cross}(n) \right],
\end{equation*}
as we can choose $K>0$ such that $M > d \alpha$ making the second term negligible.

Let us momentarily claim the following fact
\begin{equation}\label{eqn:ProbFewCrossings}
    \mathbb{P}_{0, 0}\left( \text{Cross}(n) > s \right) \le \exp\left( - c \frac{s}{n^{1/2 + \varepsilon}}\right),
\end{equation}
for some positive constant $c$ depending only on $K, \vec{\ell}, \lambda, d$. 
Assuming \eqref{eqn:ProbFewCrossings} and choosing $s = n^{1/2+2\varepsilon}$, we obtain
\begin{align*}
    \mathbb{E}_{0, 0}\left[ |\mathrm{JRL}^\le(n, \varepsilon)| \right] &\le 8n^{60\varepsilon} \left(\mathbb{E}_{0, 0}\left[ \text{Cross}(n) \mathds{1}_{\{\text{Cross}(n) > n^{1/2+2\varepsilon}\}} \right] + n^{1/2+2\varepsilon} \right)\\
    &\le 8n^{60\varepsilon} \left( n\mathbb{P}_{0, 0}\left(\text{Cross}(n)>n^{1/2+2\varepsilon} \right) + n^{1/2+2\varepsilon} \right)\\
    &\le 8n^{60\varepsilon} \left( n e^{-n^\varepsilon} + n^{1/2+2\varepsilon} \right) \le 16 n^{1/2 + 70\varepsilon},
\end{align*}
which concludes the proof.

We are left to show that $\eqref{eqn:ProbFewCrossings}$ holds. Suppose that there exists some $C>0$ such that for every $m \in \mathbb{N}$ 
\begin{equation}\label{eqn:LBjs}
    \mathbb{P}_{0, 0} \left( j_m > n | j_1 \le n, \dots, j_{m-1}\le n \right) \ge \frac{C}{n^{1/2 + \varepsilon}}.
\end{equation}
Then we readily see that 
\begin{align*}
    \mathbb{P}_{0, 0}\left( \text{Cross}(n) > s \right)\leq \mathbb{P}_{0, 0}\big(\sum_{i=1}^{m}j_i\leq n \text{ }\forall m\leq s\big)
    &\leq \mathbb{P}_{0, 0}\big(\sum_{i=1}^{s}j_i\leq n \big)\\
    &\leq \mathbb{P}_{0, 0}(j_i\leq n \text{ }\forall i\leq s)\\
    &=\prod_{i=1}^s\mathbb{P}_{0, 0}(j_i\leq n|j_1,\dots,j_{i-1}\leq n) \\
    &=\prod_{i=1}^s[1-\mathbb{P}_{0, 0}(j_i> n|j_1,\dots,j_{i-1}\leq n)]\\
    &\leq \big(1-\frac{C}{n^{1/2 + \varepsilon}}\big)^s\leq e^{-C\frac{s}{n^{1/2+\varepsilon}}},
\end{align*}
where for the last inequality we have used the standard bound $1+x\leq e^x$, which is valid for all $x\in \mathbb{R}$.

To show \eqref{eqn:LBjs} and conclude the proof we aim to use Lemma~\ref{lemma:SeparationSameEnv}. We set the notation
\begin{equation}\label{eqn:EventSeparation}
    G(n) \coloneqq \Bigg\{\text{for all } L \in [n^{50\varepsilon}, n], \inf_{\substack{k_1 \colon |X^1_{k_1} \cdot \vec{\ell} - L|\le 1 \\ k_2 \colon |X^2_{k_2} \cdot \vec{\ell} - L|\le 1}} \left| \langle (X^1_{k_1} - X^2_{k_2}), \vec{u}\rangle \right| \ge n^{10\varepsilon} \Bigg\}.
\end{equation}
The other key ingredient is \cite[Theorem~3.25]{QuenchedBiasedRWRC}. Indeed, from there we obtain 
\begin{align*}
    \mathbb{P}_{0, 0}&(j_m > n,j_1 \le n, \dots, j_{m-1}\le n) \\
    &\ge \sum_{U_1,U_2}\mathbb{P}_{0, 0}(j_1 \le n, \dots, j_{m-1}\le n, X^{1}_{T^1_{\psi_m}},X^{2}_{T^2_{\psi_m}}) \mathbb{P}^K_{U_1, U_2}\left( G(n) \big| \,\,\mathcal{D}^{\bullet}=+\infty\right).
\end{align*}
Observing that $U_1-U_2 \in \mathcal{U}$ and applying Lemma~\ref{lemma:SeparationSameEnv} gives the result.
\end{proof}

\begin{proof}[Proof of Proposition~\ref{prop:FewCrossing}]
    The proof is a straightforward consequence of Proposition~\ref{prop:CrossingJRL}. Indeed, define the set of joint regeneration levels 
\begin{equation*}
    \mathrm{JRL}^>(n, \varepsilon) \coloneqq \left\{k \colon \mathcal{L}_k \le n \text{ and } \left\|X^{(1)}_{T^{(1)}_{\mathcal{L}_k}} - X^{(2)}_{T^{(2)}_{\mathcal{L}_k}}\right\|> n^{10\varepsilon}\right\}.
\end{equation*}
Recall the events $F_{n, \varepsilon} \cap L_{n, \varepsilon}$ defined at \eqref{eqn:UniformBoundBoxes}, \eqref{eqn:UniformBoundJointSlabs}. We define the event
\begin{equation}\label{eqn:CrossAfterFarJointReg}
    \mathrm{FC}_{n, \varepsilon}^c \coloneqq \left\{ \exists k \in \mathrm{JRL}^>(n, \varepsilon) \colon  \inf_{\substack{k_1 \in K_1 \\ k_2 \in K_2 }} \left\| X^{(1)}_{k_1} - X^{(2)}_{k_2} \right\| \le 3, K_i = [T^{(i)}_{\mathcal{L}_k}, T^{(i)}_{\mathcal{L}_{k+1}}] \cap \mathbb{N}, i = 1, 2   \right\}.
\end{equation}
One can easily check that $\mathrm{FC}_{n, \varepsilon}^c \subseteq F_{n, \varepsilon}^c \cup L_{n, \varepsilon}^c$ (see Lemma~\ref{lemma:GoodEventsBoxes} for the two events on the right). We also note that on the event $\mathrm{FC}_{n, \varepsilon} \cap F_{n, \varepsilon} \cap L_{n, \varepsilon}$ there cannot be a crossing in the regeneration slabs associated to $\mathrm{JRL}^>(n, \varepsilon)$. Indeed, the two walks start at a distance $n^{10\varepsilon}$ and can move at most $n^{\alpha \varepsilon}$ (recall that we only have the restriction $\alpha > d+3$).

Hence,
\begin{equation*}
    \mathbf{E}\left[ E^\omega_{0, 0}\left[ I_n \right] \right] \le \mathbf{E}\left[ E^\omega_{0, 0}\left[ I_n \mathds{1}_{\{\mathrm{FC}_{n, \varepsilon} \cap F_{n, \varepsilon} \cap L_{n, \varepsilon}\}}\right]\right] + \mathbf{E}\left[ E^\omega_{0, 0}\left[ I_n \mathds{1}_{\{\mathrm{FC}_{n, \varepsilon}\}^c}\right] \right] \le Cn^{1/2 + c\varepsilon} + Cn^{d\alpha - M},
\end{equation*}
Finally, we notice that we can first fix $\varepsilon>0$ such that $c\varepsilon<\delta$ and then $K_0$ such that for all $K \ge K_0$ $d\alpha - M$ is smaller than $1/2$.
\end{proof}

\section{Martingale difference and variance bound for the position}\label{sect:MartDiffPos}

The goal of this section is to prove \eqref{EquationBoundVarianceTraj}. To this end, we will employ a strategy that draws inspiration from the works \cite{Berger_Zeitouni, RassoulAghaSeppalainen}.

Recall that $T_{R} = \inf\{ n \ge 0: X_{n} \cdot \vec{\ell} \ge R\}.$ For two paths $\pi_1$ and $\pi_2$ of length $m$ and $k$ respectively, let $\pi_1 \circ \pi_2$ denote their concatenation, i.e.\ for $ 0 \le s \le m+k$ we set
\begin{equation}\label{eqn:ConcatenationPaths}
    \pi_1 \circ \pi_2(s) \coloneqq 
    \begin{cases}
    \pi_1(s) \quad &\text{if } s \le m,\\
    \pi_1(m) + \pi_{2}(s - m) \quad &\text{if } s \in \llbracket m, k+m \rrbracket.
    \end{cases}
\end{equation}
Let $\{X\}_{a}^{b}$ denote the trajectory of the random walk between times $a$ and $b$, for non-negative integers $a\leq b$.

\subsection{Martingale difference}

\begin{proof}[Proof of \eqref{EquationBoundVarianceTraj}] 
Throughout the proof we will assume, without loss of generality, that $F_2$ in the statement is bounded by $1$ and has Lipschitz constant $1$.

\vspace{0.3cm}

\noindent \textbf{Step 1: Reduction to a finite box.}
Let $E_n$ be a (good) event such that $\mathbb{P}_{0}\left( E_n \right) \ge 1 - Cn^{-c}$ for two constants $C, c>0$ and all $n \in \mathbb{N}$. Then, using Jensen's inequality, we readily see that for any such event
\begin{align*}
    \mathbf{Var}\left(E_{0}^{\omega}[F_2(Z_{n})]\right) &= \mathbf{E}\left[E_{0}^{\omega}[F_2(Z_{n})]^2\right] -  \mathbf{E}\left[E_{0}^{\omega}[F_2(Z_{n})]\right]^2\\
    &\le \mathbf{E}\left[E_{0}^{\omega}[F_2(Z_{n})\mathds{1}_{\{E_n\}}]^2\right] -  \mathbf{E}\left[E_{0}^{\omega}[F_2(Z_{n})\mathds{1}_{\{E_n\}}]\right]^2 \\
    &+ 2\mathbf{E}\left[E_{0}^{\omega}[F_2(Z_{n}) \mathds{1}_{\{E_n^c\}}]^2\right] + 2  \mathbf{E}\left[E_{0}^{\omega}[F_2(Z_{n})\mathds{1}_{\{E^c_n\}}] E_{0}^{\omega}[F_2(Z_{n})\mathds{1}_{\{E_n\}}]\right] \\
&\hspace{1cm}+2\mathbf{E}\left[E_{0}^{\omega}[F_2(Z_{n})\mathds{1}_{\{E_n\}}]\right]\mathbf{E}\left[E_{0}^{\omega}[F_2(Z_{n})\mathds{1}_{\{E^c_n\}}]\right]\\
    & \le \mathbf{E}\left[E_{0}^{\omega}[F_2(Z_{n}) \mathds{1}_{\{E_n\}}]^2\right] -  \mathbf{E}\left[E_{0}^{\omega}[F_2(Z_{n})\mathds{1}_{\{E_n\}}]\right]^2 + 5Cn^{-c}\\
    & = \mathbf{Var}\left(E_{0}^{\omega}[\widetilde{F}_2(Z_{n})]\right) + 5Cn^{-c},
\end{align*}
where we set $\widetilde{F}_2(Z_{n}) \coloneqq F_2(Z_{n}) \mathds{1}_{\{E_n\}}$. From now on, we will talk about the `variance of $E_{0}^{\omega}[F_2(Z_{n})]$ on $E_n$' referring to the variance of $E_{0}^{\omega}[\widetilde{F}_2(Z_{n})]$; this is justified by the above estimate.

Define $\tilde{c} \coloneqq \vec{v} \cdot \vec{\ell}$. We introduce the events 
\begin{equation}\label{eqn:NotTooFar}
    K_{n} \coloneqq \{X_{\tau_{n}} \cdot \vec{\ell} \le 3 \tilde{c}n\}  \text{ and } J_n \coloneqq \{ T_{\partial \mathcal{B}(3 \tilde{c} n, (3 \tilde{c} n)^\alpha)} \ge T_{\partial^+ \mathcal{B}(3 \tilde{c} n, (3 \tilde{c} n)^\alpha)} \}.
\end{equation}
First of all, using the decomposition
\[(X_{\tau_n} -X_{\tau_{1}}) \cdot \vec{\ell}= \sum_{i = 2}^n (X_{\tau_i} -X_{\tau_{i-1}}) \cdot \vec{\ell}\]
we split
\begin{align*}
    \mathbb{P}_{0}\left(K_{n}^c \right) 
    &\le \mathbb{P}_{0}\left(X_{\tau_{1}} \cdot \vec{\ell} > \tilde{c}n \right) + \mathbb{P}_{0}\left((X_{\tau_n} -X_{\tau_{1}}) \cdot \vec{\ell} > 2\tilde{c}n \right) \\
    &\le C n^{-M} + \mathbb{P}_{0}\left(\sum_{i = 2}^n (X_{\tau_i} -X_{\tau_{i-1}}) \cdot \vec{\ell} > 2\tilde{c}n \right),
\end{align*}
where we applied Theorem~\ref{theo:FKRegStruct} to bound the probability that $X_{\tau_{1}} \cdot \vec{\ell}$ is greater than $\tilde{c}n$. For the second probability above, note that the $(X_{\tau_i} -X_{\tau_{i-1}}) \cdot \vec{\ell}$ are i.i.d.\ random variables with finite $M$-th moment, for arbitrary $M>0$. 
Furthermore, by definition, their common mean is $\tilde{c}$. 

In order to simplify the notation, let us denote $Y_i \coloneqq (X_{\tau_i} -X_{\tau_{i-1}}) \cdot \vec{\ell}$ so that $\mathbb{E}_{0}[Y_i] = \tilde{c}$. Hence we see that
\begin{align*}
    \mathbb{P}_{0}\left(\sum_{i = 2}^n (X_{\tau_i} -X_{\tau_{i-1}}) \cdot \vec{\ell} > 2\tilde{c}n \right) 
    \le \mathbb{P}_{0}\left(\sum_{i = 2}^n \left(Y_i - \mathbb{E}_{0}[Y_1] \right) > \tilde{c} n \right) \le Cn^{-M},
\end{align*}
where we applied Lemma~\ref{lemma:PolynomialBound} of the Appendix. All in all, we arrive at $\mathbb{P}_0(K^c_n)\leq Cn^{-M}$. Moreover, from \cite[Theorem~3.1]{Kious_Frib} we obtain $\mathbb{P}_{0}(J_n^c)\le Ce^{- c n}$ and hence, defining $E_{n, \varepsilon} \coloneqq F_{n, \varepsilon} \cap K_n \cap J_n$, it follows from (\ref{eqn:FNepsi}) together with the last two estimates that $\mathbb{P}_0((E_{n, \varepsilon})^c) \le C n^{-M}$. 

We highlight that, on the event $E_{n, \varepsilon}$, the trajectory of the random walk $\{X_k\}_{k = 1}^{\tau_n}$ depends only on the conductances inside the tilted box $\mathcal{B}(3 \tilde{c} n, (3 \tilde{c} n)^\alpha)$.

\vspace{0.3cm}

\noindent \textbf{Step 2: Martingale difference.}
Let us consider the box $\mathcal{B}(c_1 n, (c_1 n)^\alpha)$ for $c_1 \coloneqq 3 \tilde{c}$ and $n \in \mathbb{N}$. Let $\{\mathrm{ord}(z): z \in \mathcal{B}(c_1 n, (c_1 n)^\alpha)\}$ denote an arbitrary, deterministic ordering of the vertices in $\mathbb{Z}^d \cap \mathcal{B}(c_1 n, (c_1 n)^\alpha)$ such that, for all $x, y \in \mathcal{B}(c_1 n, (c_1 n)^\alpha)$, we have $\mathrm{ord}(z) \ge \mathrm{ord}(y)$ whenever $z \cdot \vec{\ell} \ge y \cdot \vec{\ell} $.
We write $p(z)$ for the \textit{predecessor} of vertex $z$ in such ordering. 

Denote by $\mathcal{G}_{z}$ the $\sigma$-algebra generated by the conductances on those edges which are incident to the sites $x\in \mathbb{Z}^d$ which comes before $z$ in our order; formally, we let
\begin{equation}\label{eqn:SigmaAlgebraRevealment}
    \mathcal{G}_{z} \coloneqq \sigma\left(\left\{ c^*(e), e \in E(\mathbb{Z}^d) \colon e \sim x \text{ for some } x \in \mathbb{Z}^d \text{ with } \mathrm{ord}(x)\le \mathrm{ord}(z)\right\}\right).
\end{equation}
To highlight the fact that we will be focusing on the trajectory of the random walk between times $0$ and $\tau_n$, we write $F^n(\{X\}_{0}^{\tau_n})$ for the function $\tilde{F}_2(Z_{n})$. Then we can write, on the event $E_{n, \varepsilon}$,
\begin{align}
    \nonumber E_0^\omega\left[ F^n(\{X\}_{0}^{\tau_n}) \right] &- \mathbf{E}\left[ E_0^\omega\left[ F^n(\{X\}_{0}^{\tau_n}) \right] \right]\\
    &= \sum_{z \in \mathcal{B}_{0}(c_1 n, (c_1 n)^\alpha)} \mathbf{E}\left[ E_0^\omega\left[ F^n(\{X\}_{0}^{\tau_n}) \right] | \mathcal{G}_{z} \right] - \mathbf{E}\left[ E_0^\omega\left[ F^n(\{X\}_{0}^{\tau_n}) \right] | \mathcal{G}_{p(z)} \right] \nonumber \\
    & = \sum_{k = -nc_1}^{nc_1} \sum_{z \in \hat{H}_k} \Delta_{z}^n, \label{eqn:SplitMartingale}
\end{align}
where we denote by $\hat{H}_k$ the set of sites whose inner product with $\vec{\ell}$ lies in $[k,k+1)$, i.e.\ $\hat{H}_k \coloneqq \{z \in \mathbb{Z}^d \colon z \cdot \vec{\ell} \in [k, k+1) \}$, whereas 
\begin{equation*}
    \Delta_{z}^n \coloneqq \mathbf{E}\left[ E_0^\omega\left[ F^n(\{X\}_{0}^{\tau_n}) \right] | \mathcal{G}_{z} \right] - \mathbf{E}\left[ E_0^\omega\left[ F^n(\{X\}_{0}^{\tau_n}) \right] | \mathcal{G}_{p(z)} \right].
\end{equation*}
Since martingale increments are uncorrelated, we readily obtain the identity
\begin{equation}\label{eqn:BoundVarIntersection}
    \textbf{Var}\left( E_0^\omega\left[ F^n(\{X\}_{0}^{\tau_n}) \right] \right) = \sum_{k = -nc_1}^{nc_1} \sum_{z \in \hat{H}_k} \mathbf{E}\left[ \left(\Delta_{z}^n \right)^2\right].
\end{equation}

\vspace{3px}

\vspace{0.3cm}

\noindent \textbf{Step 3: The approximating quantity and inserting the good event.}
We aim to bound the quantity $\mathbf{E}\left[ \left(\Delta_{z}^n\right)^2\right]$ on the right-hand side of (\ref{eqn:BoundVarIntersection}). To this end, we start by splitting 
\begin{equation*}
    \mathbf{E}\left[ E_0^\omega\left[ F^n(\{X\}_{0}^{\tau_n}) \right] | \mathcal{G}_{z} \right] = \mathbf{E}\left[ E_0^\omega\left[ F^n(\{X\}_{0}^{\tau_n}) \mathds{1}_{\{ X \text{ visits } \mathcal{V}_z \}} \right] | \mathcal{G}_{z} \right] + \mathbf{E}\left[ E_0^\omega\left[ F^n(\{X\}_{0}^{\tau_n}) \mathds{1}_{\{ X \text{ visits } \mathcal{V}_z \}^c} \right] | \mathcal{G}_{z} \right].
\end{equation*}
Note that for any function $G :(\mathbb{Z}^d)^{\mathbb{N}} \to \mathbb{R}_+$, we have 
\begin{align*}
     \mathbf{E}\left[ E_0^\omega\left[ G(\{X\}_{\ge 0}) \mathds{1}_{\{ X \text{ visits } \mathcal{V}_z \}^c} \right] | \mathcal{G}_{z} \right] = \mathbf{E}\left[ E_0^\omega\left[ G(\{X\}_{\ge 0}) \mathds{1}_{\{ X \text{ visits } \mathcal{V}_z \}^c} \right] | \mathcal{G}_{p(z)} \right].
\end{align*}
In particular,
\begin{equation*}
    \mathbf{E}\left[ E_0^\omega\left[ F^n(\{X\}_{0}^{\tau_n}) \mathds{1}_{\{ X \text{ visits } \mathcal{V}_z \}^c} \right] | \mathcal{G}_{z} \right] = \mathbf{E}\left[ E_0^\omega\left[ F^n(\{X\}_{0}^{\tau_n}) \mathds{1}_{\{ X \text{ visits } \mathcal{V}_z \}^c} \right] | \mathcal{G}_{p(z)} \right].
\end{equation*}
We deduce that
\begin{align*}
\Delta_{z}^n= 
    &\mathbf{E}\left[ E_0^\omega\left[ F^n(\{X\}_{0}^{\tau_n}) \mathds{1}_{\{ X \text{ visits } \mathcal{V}_z \}} \right] | \mathcal{G}_{z} \right] - \mathbf{E}\left[ E_0^\omega\left[ F^n(\{X\}_{0}^{\tau_n}) \mathds{1}_{\{ X \text{ visits } \mathcal{V}_z \}} \right] | \mathcal{G}_{p(z)} \right].
\end{align*}
From now on, with the purpose of simplifying notation let us write $\mathbf{E}_z\left[ \cdot  \right]$ for $\mathbf{E}\left[ \cdot \, |\mathcal{G}_{z} \right]$ (and similarly for $p(z)$).

Consider any bounded function $\hat{F}$ of the trajectory such that
\begin{align*}
    \mathbf{E}_z\left[ E^\omega_0[\hat{F} \mathds{1}_{\{ X \text{ visits } \mathcal{V}_z \}}] \right] = \mathbf{E}_{p(z)}\left[ E^\omega_0[\hat{F}\mathds{1}_{\{ X \text{ visits } \mathcal{V}_z \}}] \right].
\end{align*}
Then, using the identity $(a+b)^2 \le 2(a^2 + b^2)$ and the boundedness of all the function considered 
\begin{align*}
    \mathbf{E}\left[ \left(\Delta_{z}^n\right)^2\right] &= \mathbf{E}\Bigg[ \bigg( \mathbf{E}_z\left[ E^\omega_0[F^n(\{X\}_{0}^{\tau_n}) \mathds{1}_{\{ X \text{ visits } \mathcal{V}_z \}}] \right] - \mathbf{E}_z\left[ E^\omega_0[\hat{F}\mathds{1}_{\{ X \text{ visits } \mathcal{V}_z \}}] \right] \\
    & + \mathbf{E}_{p(z)}\left[ E^\omega_0[\hat{F}\mathds{1}_{\{ X \text{ visits } \mathcal{V}_z \}}] \right] - \mathbf{E}_{p(z)}\left[ E^\omega_0[F^n(\{X\}_{0}^{\tau_n}) \mathds{1}_{\{ X \text{ visits } \mathcal{V}_z \}}] \right]\bigg)^2\Bigg] \\
    & \le 2 \mathbf{E}\Bigg[ \bigg( \mathbf{E}_z\left[ E^\omega_0[F^n(\{X\}_{0}^{\tau_n}) \mathds{1}_{\{ X \text{ visits } \mathcal{V}_z \}}] \right] - \mathbf{E}_z\left[ E^\omega_0[\hat{F} \mathds{1}_{\{ X \text{ visits } \mathcal{V}_z \}}\mathds{1}_{E_{n, \varepsilon}}] \right] \\
    & + \mathbf{E}_{p(z)}\left[ E^\omega_0[\hat{F}\mathds{1}_{\{ X \text{ visits } \mathcal{V}_z \}} \mathds{1}_{E_{n, \varepsilon}}] \right] - \mathbf{E}_{p(z)}\left[ E^\omega_0[F^n(\{X\}_{0}^{\tau_n}) \mathds{1}_{\{ X \text{ visits } \mathcal{V}_z \}}] \right]\bigg)^2\Bigg]\\
    & + 8 \mathbf{E}\left[ E^\omega_0[\mathds{1}_{E_{n, \varepsilon}^c}]^2\right].
\end{align*}
Finally notice that 
\begin{align*}
    \sum_{z \in \mathcal{B}_{0}(c_1 n, (c_1 n)^\alpha)} \mathbf{E}\left[ E^\omega_0[\mathds{1}_{E_{n, \varepsilon}^c}]^2\right] \le c_1^{\alpha d}n^{2d \alpha} 4 \mathbb{P}_0\left( E_{n, \varepsilon}^c \right)\le C n^{-M/2},
\end{align*}
if we choose $M$ large enough. We use the computation above to insert the good event in our approximating quantity. The minor issue here is that $\mathds{1}_{E_{n, \varepsilon}}$ does depend on the conductances in $\mathcal{E}_z$, hence  $\mathbf{E}_z\left[ E^\omega_0[\mathds{1}_{E_{n, \varepsilon}}] \right] \neq \mathbf{E}_{p(z)}\left[ E^\omega_0[\mathds{1}_{E_{n, \varepsilon}}] \right]$. However, the probability of $E_{n, \varepsilon}$ is sufficiently large that we are able to always work on it.

\vspace{0.3cm}

\noindent \textbf{Step 4: Uniform bound on bad areas.}
We introduce
\begin{itemize}
    \item $\tau_{j^+} \coloneqq \tau^+_z \coloneqq \inf\{ \tau_{k} \colon X_{\tau_{k}} \cdot \vec{\ell} > z \cdot \vec{\ell}\}$. Here $j^+$ is the random index associated with $\tau_{j^+}$.
    \item $\tau^-_z \coloneqq \sup\{ k \le T_{(z-e_1) \cdot \vec{\ell}} \,  \colon D\circ \theta_k + k > T_{(z-e_1) \cdot \vec{\ell})} \}$ with $T_{(z-e_1) \cdot \vec{\ell}} \coloneqq \inf\{ k \in \mathbb{N} \colon X_k \cdot \vec{\ell} \ge (z-e_1) \cdot \vec{\ell}) \}$. Clearly, $T_{(z-e_1) \cdot \vec{\ell}} \le T_{\mathcal{V}_z}$.
\end{itemize}  
In words $\tau^-_z$ is the last time before hitting the set $\mathcal{H}^+_{z-e_1}$, which is compatible to be a regeneration time. Note that, by construction of regeneration times, $\tau^+_z > \tau^-_z$. We also highlight that $\tau^-_z$ is not necessarily a regeneration time while $\tau^+_z$ is. We note that, by almost sure transience in the direction $\vec{\ell}$, it makes sense to consider $\tau^-_z$ to be a finite quantity. See Figure~\ref{fig:FigSplit} for a graphical representation of the definitions given above.

\begin{figure}[H]
    \centering
    \includegraphics[width=0.7\linewidth]{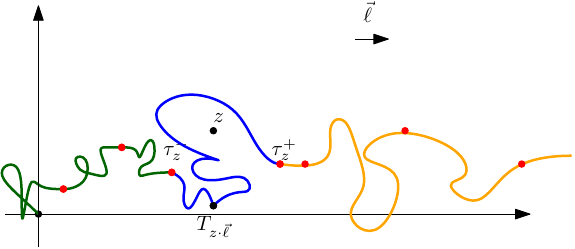}
    \caption{The construction of the approximating quantity. The green path is the one before $\tau_z^-$, the blue one is the one that we erase. The orange path is the one after $\tau_z^+$. In red the candidate regeneration points in the green path and the actual regeneration points in the orange one. The approximating path is the gluing of the green and the orange parts.}
    \label{fig:FigSplit}
\end{figure}

By using the notation specified in \eqref{eqn:ConcatenationPaths}, we split the path of $X$ and rewrite
\begin{equation*}
    F^n(\{X\}_{0}^{\tau_n}) \mathds{1}_{\{ X \text{ visits } \mathcal{V}_z \}} = F^n\left(\{X\}_{0}^{\tau^-_z} \circ \left\{ \{X\}_{\tau^-_z}^{\tau^+_z} - X_{\tau^-_z} \right\} \circ \left\{ \{X\}_{\tau^+_z}^{\tau_{n}} - X_{\tau^+_z} \right\} \right)\mathds{1}_{\{ X \text{ visits } \mathcal{V}_z \}}.
\end{equation*}
Define
\begin{equation*}
    \widetilde{F}^n(\{X\}_{0}^{\tau_n}) \coloneqq F^n\left(\{X\}_{0}^{\tau^-_z} \circ \left\{ \{X\}_{\tau^+_z}^{\tau_{n}} - X_{\tau^+_z} \right\} \right)\mathds{1}_{\{ X \text{ visits } \mathcal{V}_z \}},
\end{equation*}
where by $\{X\}_{0}^{\tau^-_z}$ we mean that we consider, under the relevant measure that generates the trajectory, the linear interpolation at candidate regeneration points. Finally, the index $\tilde{j}^+$ such that $\tau^+_{z} = \tau_{\tilde{j}^+}$ is the number of candidate regeneration points in $\{X\}_{0}^{\tau^-_z}$. Note that $\tilde{j}^+ \neq j^+$; more precisely, we always have $\tilde{j}^+ \geq j^+$. We make a few observations:
\begin{itemize}
    \item $\{X\}_{0}^{\tau^-_z}$ does not depend on $\mathcal{E}_z$, as this quantity is measurable w.r.t.\ the information generated by $\{X\}_{0}^{T_{(z-e_1)\cdot \vec{\ell}}}$.
    \item $\mathds{1}_{\{ X \text{ visits } \mathcal{V}_z \}}$ does not depend on $\mathcal{E}_z$.
    \item $\left\{\{X\}_{\tau^+_z}^{\tau_{n}} - X_{\tau^+_z} \right\}$ also does not depend on $\mathcal{E}_z$. Under both $\mathbf{E}_z\left[P^\omega_0(\cdot)\right]$ and $\mathbf{E}_{p(z)}\left[P^\omega_0(\cdot)\right]$ the increments $\left\{\{X\}_{\tau_k}^{\tau_{k+1}} - X_{\tau_k} \right\}_{k = \tilde{j}^+}^{n}$ are i.i.d.\ random variables distributed as $\left\{\{X\}_{\tau_1}^{\tau_{2}} - X_{\tau_1} \right\}$ under $\mathbb{P}_0$. Observe that $j^+$ does depend on $\mathcal{E}_z$ but $\tilde{j}^+$ does not, that is why we must distinguish between the two quantities.
\end{itemize}
Using all the above observations we obtain
\begin{equation*}
    \mathbf{E}_z\left[E^\omega_0\left[\widetilde{F}^n(\{X\}_{0}^{\tau_n})\right]\right] =\mathbf{E}_{p(z)}\left[E^\omega_0\left[\widetilde{F}^n(\{X\}_{0}^{\tau_n})\right]\right].
\end{equation*}
We also note that under the two measures $\mathbf{E}_z\left[P^\omega_0(\cdot)\right],\mathbf{E}_{p(z)}\left[P^\omega_0(\cdot)\right]$ and on the event $E_{n, \varepsilon}$, the index $j^+ = 1, \dots, n$ such that $\tau^+_z = \tau_{j^+}$ can differ by at most $Cn^\varepsilon$ with respect to the one assigned by our construction $\tilde{j}^+$. Indeed, the trajectory $\{X\}_{0}^{\tau^-_z}$ is the same under the two measures (they can be perfectly coupled). However, since we are working on $E_{n, \varepsilon}$, the walk cannot backtrack more than $n^\varepsilon$ after the time $\tau^-_z$. In a slab in direction $\vec{\ell}$ of width $n^\varepsilon$ there are at most $Cn^\varepsilon$ potential regeneration points that can be ``erased'', hence the index of $\tau^+_z$ might differ by at most $Cn^\varepsilon$, i.e.\ $\tilde{j}^+ \leq j^+ + Cn^{\varepsilon}$. 

Moreover, similarly to our previous observation, under both $\mathbf{E}_z\left[P^\omega_0(\cdot)\right]$ and $\mathbf{E}_{p(z)}\left[P^\omega_0(\cdot)\right]$ the increments $\left\{\{X\}_{\tau_k}^{\tau_{k+1}} - X_{\tau_k} \right\}_{k = j^+}^{n}$ are i.i.d.\ random variables distributed as $\left\{\{X\}_{\tau_1}^{\tau_{2}} - X_{\tau_1} \right\}$ under $\mathbb{P}_0$. Hence we obtain 
\begin{align*}
    \mathbf{E}_{p(z)}&\left[E^\omega_0\left[ \left\| \left\{\{X\}_{\tau_k}^{\tau_{k+1}} - X_{\tau_k} \right\}_{k = j^+}^{n} - \left\{\{X\}_{\tau_k}^{\tau_{k+1}} - X_{\tau_k} \right\}_{k = \tilde{j}^+}^{n} \right\|_\infty \right]\right] \\
    &\le \mathbb{E}_0\left[\sup_{k \in [0, n]}\left\| \{X\}^{\tau_{k + Cn^\varepsilon}}_{\tau_k} - X_{\tau_k} \right\|_{\infty} \mid D = \infty\right],
\end{align*}
and the same is true under $\mathbf{E}_{z}$. Applying \eqref{eqn:FNepsi}, we immediately observe that
\begin{equation}\label{eqn:ExtraTerm}
    \mathbb{E}_0\left[\sup_{k \in [0, n]}\left\|  \{X\}^{\tau_{k + Cn^\varepsilon}}_{\tau_k} - X_{\tau_k} \right\|_{\infty} \mid D = \infty\right] \le Cn^{2d\alpha\varepsilon}.
\end{equation}
On the event $E_{n, \varepsilon}$, we know that the trajectory during a regeneration interval moves at most $n^{\varepsilon\alpha}$ in each direction. Furthermore, between $\tau^-_z$ and $\tau^+_z$ there is at most one regeneration time. Hence, we see that for some $C>0$
\begin{equation}\label{eqn:CoreTerm}
    \left\|  \{X\}_{\tau^-_z}^{\tau^+_z} - X_{\tau^-_z} \right\|_{\infty} \le Cn^{2d\varepsilon\alpha}.
\end{equation}
We now focus on the two terms 
\begin{align*}
    \mathbf{E}_{p(z)}&\left[ E^\omega_0[\widetilde{F}^n(\{X\}_{0}^{\tau_n})\mathds{1}_{\{ X \text{ visits } \mathcal{V}_z \}} \mathds{1}_{E_{n, \varepsilon}}] \right] - \mathbf{E}_{p(z)}\left[ E^\omega_0[F^n(\{X\}_{0}^{\tau_n}) \mathds{1}_{\{ X \text{ visits } \mathcal{V}_z \}}\mathds{1}_{E_{n, \varepsilon}}] \right] \\
    \mathbf{E}_{z}&\left[ E^\omega_0[\widetilde{F}^n(\{X\}_{0}^{\tau_n})\mathds{1}_{\{ X \text{ visits } \mathcal{V}_z \}} \mathds{1}_{E_{n, \varepsilon}}] \right] - \mathbf{E}_{z}\left[ E^\omega_0[F^n(\{X\}_{0}^{\tau_n}) \mathds{1}_{\{ X \text{ visits } \mathcal{V}_z \}}\mathds{1}_{E_{n, \varepsilon}}] \right];
\end{align*}
the argument is symmetric for these two, so we will just write it once. 

By the Lipschitz property of $F_2$ (and hence of $\tilde{F}_2$), recalling the normalization by $n^{-1/2}$ in the definition of $Z_n$ (see \ref{EqnThreeMainQuantities}), and applying \eqref{eqn:ExtraTerm} and \eqref{eqn:CoreTerm} together with the triangle inequality
\begin{equation} \label{eqn:ErasingMiddle}
\begin{split}
    &\Big|  \mathbf{E}_{p(z)}\left[ E^\omega_0[\widetilde{F}^n(\{X\}_{0}^{\tau_n})\mathds{1}_{\{ X \text{ visits } \mathcal{V}_z \}} \mathds{1}_{E_{n, \varepsilon}}] \right] - \mathbf{E}_{p(z)}\left[ E^\omega_0[F^n(\{X\}_{0}^{\tau_n}) \mathds{1}_{\{ X \text{ visits } \mathcal{V}_z \}}\mathds{1}_{E_{n, \varepsilon}}] \right] \Big| \\
    & \le n^{-1/2} \mathbf{E}_{p(z)}\left[E^\omega_0\left[\mathds{1}_{\{ X \text{ visits } \mathcal{V}_z \}} n^{2d\alpha \varepsilon}\right]\right] \\
    &+ n^{-1/2}  \mathbf{E}_{p(z)}\left[E^\omega_0\left[\mathds{1}_{\{ X \text{ visits } \mathcal{V}_z \}}\right]\right] \mathbb{E}_0\left[\sup_{k \in [0, n]}\left\|  \{X\}^{\tau_{k + Cn^\varepsilon}}_{\tau_k} - X_{\tau_k} \right\|_{\infty} \mid D = \infty\right]\\
    & \le n^{-1/2+ 2d\alpha\varepsilon} \mathbf{E}_{p(z)}\left[E^\omega_0\left[\mathds{1}_{\{ X \text{ visits } \mathcal{V}_z \}}\right]\right].
\end{split}
\end{equation}
Using the estimate \eqref{eqn:ErasingMiddle} and Jensen's inequality with some minor manipulation we obtain
\begin{align}
        \nonumber\mathbf{E}&\left[ \left(\Delta_{z}^n\right)^2\right] \\&\le 3\mathbf{E}\left[\left(\mathbf{E}_{p(z)}\left[ E^\omega_0[\widetilde{F}^n(\{X\}_{0}^{\tau_n})\mathds{1}_{\{ X \text{ visits } \mathcal{V}_z \}} \mathds{1}_{E_{n, \varepsilon}}] \right] - \mathbf{E}_{p(z)}\left[ E^\omega_0[F^n(\{X\}_{0}^{\tau_n}) \mathds{1}_{\{ X \text{ visits } \mathcal{V}_z \}}\mathds{1}_{E_{n, \varepsilon}}] \right]\right)^2\right] \nonumber \\
        &+ 3\mathbf{E}\left[\left(\mathbf{E}_{z}\left[ E^\omega_0[\widetilde{F}^n(\{X\}_{0}^{\tau_n})\mathds{1}_{\{ X \text{ visits } \mathcal{V}_z \}} \mathds{1}_{E_{n, \varepsilon}}] \right] - \mathbf{E}_{z}\left[ E^\omega_0[F^n(\{X\}_{0}^{\tau_n}) \mathds{1}_{\{ X \text{ visits } \mathcal{V}_z \}}\mathds{1}_{E_{n, \varepsilon}}] \right]\right)^2\right] \nonumber\\
        & \le Cn^{-1 + 4 d \alpha \varepsilon} \mathbf{E}\left[ E_0^\omega\left[\mathds{1}_{\{ X \text{ visits } \mathcal{V}_z \}} \right]^2 \right].\label{eqn:BoundDeltaz}
\end{align}
\vspace{0.3cm}

\noindent \textbf{Step 5: Reduction to random walks intersections.}
Let $X^{(1)}, X^{(2)}$ be two copies of $X$ evolving in the same environment. Then we can write
\begin{equation*}
    \mathbf{E}\left[ E_0^\omega\left[\mathds{1}_{\{ X \text{ visits } \mathcal{V}_z \}} \right]^2 \right] = \mathbf{E}\left[ E_{0, 0}^\omega\left[\mathds{1}_{\{ X^{(1)} \text{ visits } \mathcal{V}_z \}} \mathds{1}_{\{ X^{(2)} \text{ visits } \mathcal{V}_z \}} \right] \right].
\end{equation*}
Combining \eqref{eqn:BoundVarIntersection} and \eqref{eqn:BoundDeltaz} we obtain
\begin{equation*}
    \textbf{Var}\left( E_0^\omega\left[ F^n(\{X\}_{0}^{\tau_n}) \right] \right) \le Cn^{4d\varepsilon\alpha - 1} \mathbf{E}\left[ \sum_{\substack{z \in \mathcal{B}_{0}(c_1 n, (c_1 n)^\alpha) }} E_{0, 0}^\omega\left[\mathds{1}_{\{ X^1 \text{ visits } \mathcal{V}_z \}} \mathds{1}_{\{ X^2 \text{ visits } \mathcal{V}_z \}} \right] \right].
\end{equation*}
The result follows from Proposition~\ref{prop:FewCrossing} as $\delta$ there and $\varepsilon$ above can be chosen arbitrarily small.  
\end{proof}

\section{The joint process}\label{sect:Joint}
The goal of this section is to prove statement \eqref{EquationBoundVarianceClock}. To do this we will employ a martingale difference argument as in the previous section. 
\begin{proof}[Proof of \eqref{EquationBoundVarianceClock}] 
We will use the setting and notation of Section~\ref{sect:MartDiffPos}. Recall the definition of $W^*_n$ in \eqref{ShiftedClock}, and let $(Z^*_n, S^*_{n})$ be its building blocks. We assume that the hypotheses of Theorem~\ref{TheoremVarianceClockProcess} are satisfied. In what follows we will write $F(W^*_n) \coloneqq F_1(W^*_n(t_1), \dots, W^*_n(t_m))$ to lighten the notation. We also assume, without loss of generality, that $\|F_1\|_\infty \le 1$ and $F_1$ is $1$-Lipschitz.\\

\noindent \textbf{Step 1: Reduction to a finite box.}
The steps are identical to the ones for the position. We can insert the good event $E_{n, \varepsilon}$ to reduce the variance to the one accumulated inside the finite box $\mathcal{B}(c_1 n, (c_1 n)^\alpha)$. \\

\noindent \textbf{Step 2: Martingale difference.}
The steps are identical. At the end we find
\begin{equation}\label{eqn:BoundVarIntersectionJoint}
    \textbf{Var}\left( E_0^\omega\left[ F(W^*_n) \right] \right) = \sum_{k = -nc_1}^{nc_1} \sum_{z \in \hat{H}_k} \mathbf{E}\left[ \left(\Delta_{z}^n \right)^2\right],
\end{equation}
where
\begin{equation*}
    \Delta_{z}^n \coloneqq \mathbf{E}\left[ E_0^\omega\left[ F(W^*_n) \mathds{1}_{E_{n, \varepsilon}}\mathds{1}_{\{X \text{ visits } \mathcal{V}_z\}} \right] | \mathcal{G}_{z} \right] - \mathbf{E}\left[ E_0^\omega\left[ F(W^*_n)\mathds{1}_{E_{n, \varepsilon}}\mathds{1}_{\{X \text{ visits }\mathcal{V}_z\}} \right] | \mathcal{G}_{p(z)} \right].\\
\end{equation*}

\noindent \textbf{Step 3: The approximating quantity and inserting the good event.}
Let $z \in \mathbb{Z}^d$; we start by introducing the approximating quantity. Recall that $\tau_{j^+} = \tau^+_z \coloneqq \inf\{ \tau_{k} \colon X_{\tau_{k}} \cdot \vec{\ell} > z \cdot \vec{\ell}\}$ and $\tau^-_z \coloneqq \sup\{ k \le T_{(z-e_1) \cdot \vec{\ell}} \, \colon D\circ \theta_k + k > T_{(z-e_1) \cdot \vec{\ell}} \}$. Also recall that $j^+$ is the index of $\tau^+_z$ and $\tilde{j}^+$ is the one associated to $\tau^+_z$ when we count the potential regeneration points in $\{X\}_{0}^{\tau_z^-}$. We define the function
\begin{equation}\label{eqn:ApproxFunctionJoint}
    \tilde{F}\left(W^*_n\right) \coloneqq F(\{\tilde{W}^*_{n}\})\mathds{1}_{\{X \text{ visits }\mathcal{V}_z\}},
\end{equation}
where $\tilde{W}^*_{n}$ is the same as $W^*_n$ but is generated using the modified trajectory 
\begin{equation*}
    \{X\}_{0}^{\tau^-_z} \circ \left\{ \{X\}_{\tau^+_z}^{\tau_{n}} - X_{\tau^+_z}\right\}.
\end{equation*}
Once again we note that 
\begin{enumerate}
    \item $\{X\}_{0}^{\tau^-_z} \circ \left\{ \{X\}_{\tau^+_z}^{\tau_{n}} - X_{\tau^+_z}\right\}$ and as a consequence $F(\{\tilde{W}^*_{n}\})$ does not depend on the environment at $\mathcal{E}_z$.
    \item $\mathds{1}_{\{X \text{ visits }\mathcal{V}_z\}}$ does not depend on the environment at $\mathcal{E}_z$ as it is measurable with respect to the environment seen before $T_{(z-e_1) \cdot \vec{\ell}} \le T_{\mathcal{V}_z}$.
\end{enumerate}
Hence, we can proceed as in the corresponding step of Section~\ref{sect:MartDiffPos} and reduce the problem to bounding the quantities
\begin{align*}
     &\mathbf{E}_z\left[ E_0^\omega\left[ F(\{W^*_{n}\})\mathds{1}_{\{X \text{ visits }\mathcal{V}_z\}} \mathds{1}_{E_{n, \varepsilon}} \right] \right] - \mathbf{E}_z\left[ E_0^\omega\left[ F(\{\tilde{W}^*_{n}\})\mathds{1}_{\{X \text{ visits }\mathcal{V}_z\}}\mathds{1}_{E_{n, \varepsilon}} \right] \right]\\
     &\mathbf{E}_{p(z)}\left[ E_0^\omega\left[ F(\{W^*_{n}\})\mathds{1}_{\{X \text{ visits }\mathcal{V}_z\}}\mathds{1}_{E_{n, \varepsilon}} \right] \right] - \mathbf{E}_p(z)\left[ E_0^\omega\left[ F(\{\tilde{W}^*_{n}\})\mathds{1}_{\{X \text{ visits }\mathcal{V}_z\}}\mathds{1}_{E_{n, \varepsilon}} \right] \right].
\end{align*}

\noindent \textbf{Step 4: Separation of position and clock.}
Recall that $F_1$ evaluates its argument as a $m$-point function, $m \in \mathbb{N}$ arbitrary but fixed. Furthermore, its argument is a $(d+1)$-vector valued function. Hence by the triangle inequality, the lipschitz property and boundedness of $F_1$ we obtain
\begin{equation*}
    |F(\{W^*_{n}\}) - F(\{\tilde{W}^*_{n}\})| \le m\Big(\sup_{j = 1, \dots, m} \| Z^*_n(t_j) - \tilde{Z}^*_{n}(t_j) \| + \sup_{j = 1, \dots, m} \left| S^*_{n}(t_j) - \tilde{S}^*_{n}(t_j) \right| \Big) \wedge 1.
\end{equation*}
Thus we need to bound the following four terms:
\begin{equation}\label{eqn:FourTerms}
\begin{split}
    &\mathbf{E}_{p(z)}\left[ E_0^\omega\left[ \Big(\sup_{j = 1, \dots, m} \| Z^*_n(t_j) - \tilde{Z}^*_{n}(t_j) \|\wedge 1 \Big)\mathds{1}_{\{X \text{ visits }\mathcal{V}_z\}}\mathds{1}_{E_{n, \varepsilon}} \right] \right]\\
     &\mathbf{E}_z\left[ E_0^\omega\left[ \Big(\sup_{j = 1, \dots, m} \| Z^*_n(t_j) - \tilde{Z}^*_{n}(t_j) \| \wedge 1 \Big) \mathds{1}_{\{X \text{ visits }\mathcal{V}_z\}}\mathds{1}_{E_{n, \varepsilon}} \right] \right] \\
     &\mathbf{E}_{p(z)}\left[ E_0^\omega\left[ \Big(\sup_{j = 1, \dots, m} \left| S^*_{n}(t_j) - \tilde{S}^*_{n}(t_j) \right|\wedge 1 \Big)\mathds{1}_{\{X \text{ visits }\mathcal{V}_z\}}\mathds{1}_{E_{n, \varepsilon}} \right] \right]\\
     &\mathbf{E}_z\left[ E_0^\omega\left[ \Big(\sup_{j = 1, \dots, m} \left| S^*_{n}(t_j) - \tilde{S}^*_{n}(t_j) \right| \wedge 1 \Big) \mathds{1}_{\{X \text{ visits }\mathcal{V}_z\}}\mathds{1}_{E_{n, \varepsilon}} \right] \right].
\end{split}
\end{equation}
It is rather clear that for $0 \le t_1 \le \dots \le t_m \le 1 $
\begin{equation*}
    \sup_{j = 1, \dots, m} \| Z^*_n(t_j) - \tilde{Z}^*_{n}(t_j) \| \le \sup_{t \in [0, 1]} \| Z^*_n(t) - \tilde{Z}^*_{n}(t) \|.
\end{equation*}
Then, proceeding in the same way as we did to obtain \eqref{eqn:ErasingMiddle}, we see that the first term in \eqref{eqn:FourTerms} is such that 
\begin{align}
    \mathbf{E}_{p(z)}&\left[ E_0^\omega\left[ \Big(\sup_{j = 1, \dots, m} \| Z^*_n(t_j) - \tilde{Z}^*_{n}(t_j) \|\wedge 1 \Big)\mathds{1}_{\{X \text{ visits }\mathcal{V}_z\}}\mathds{1}_{E_{n, \varepsilon}} \right] \right]\nonumber \\
    &\le n^{-1/2+ 2d\alpha\varepsilon} \mathbf{E}_{p(z)}\left[E^\omega_0\left[\mathds{1}_{\{ X \text{ visits } \mathcal{V}_z \}}\right]\right]\label{eqn:PositionBoundJoint}
\end{align}
and the same bound holds for $\mathbf{E}_z$. The rest of the proof follows verbatim.\\

\noindent \textbf{Step 5: Non-uniform bound on bad areas for the clock.}
As already mentioned, the bound for the clock process is more involved. The first thing that we need to understand is what constitutes the difference $S^*_{n}(t_k) - \tilde{S}^*_{n}(t_k)$ for all $k = 1, \dots, m$. To begin, we introduce some indicator functions. Let us define
\begin{align*}
    \mathds{1}_{\{z\}} \coloneqq \mathds{1}_{\{X \text{ visits } \mathcal{V}_z\}} \quad \quad \text{and } \quad \quad \mathds{1}_{\{z, j\}} \coloneqq \mathds{1}_{\{\{X_k\}_{k = \tau_j + 1}^{\tau_{j+1}} \text{ visits } \mathcal{V}_z\}}.
\end{align*}
Crucially, observe that, for all $z$ in the sum \eqref{eqn:BoundVarIntersectionJoint} (using the fact that the events inside $\mathds{1}_{\{z, j\}}$ are mutually exclusive)
\begin{equation*}
    \mathds{1}_{\{z\}} = \sum_{j = 0}^{dc_1 n} \mathds{1}_{\{z, j\}}.
\end{equation*}
Recall that here $c_1$ ensures that, on $E_{n, \varepsilon}$, if $\mathcal{V}_z$ is hit at all it is hit among the first $dc_1 n$ regeneration times. 

We make several observations:
\begin{enumerate}
    \item Whenever the $j$ such that $\mathds{1}_{\{z, j\}} = 1$ is such that $j > t_k n$, then $S^*_{n}(t_k) = \tilde{S}^*_{n}(t_k)$.
    \item Whenever the $j$ such that $\mathds{1}_{\{z, j\}} = 1$ is such that $1 \le j \le t_k n$, then certainly $S^*_{n}(t_k) - \tilde{S}^*_{n}(t_k)$ contains $\tilde{\tau}_j$. Indeed, we recall the fact that by construction $\tau_z^{-} \ge \tau_{j}$ on the event $\{z, j\}$ and $\tau_{z}^+ = \tau_{j+1}$ on the same event. 
    \item Furthermore, our construction of $\{X\}_{0}^{\tau^-_z} \circ \left\{ \{X\}_{\tau^+_z}^{\tau_{n}} - X_{\tau^+_z}\right\}$ shifts the indexing of regeneration times by $0 \le \tilde{j}^+ - j^+ \le Cn^{\varepsilon}$. The inequality follows from $E_{n, \varepsilon}$. We also remark that on $\mathds{1}_{\{z, j\}} = 1$ we have $j^+ = j+1$.
    \item On the event $\mathds{1}_{\{z, 0\}}$, $S^*_{n}(t_k) - \tilde{S}^*_{n}(t_k)$ contains only $\tau_z^{-} - \tau_z^\mathrm{ini}$ where $\tau_z^\mathrm{ini} \coloneqq \inf\{ 0 \le k \le T_{(z-e_1) \cdot \vec{\ell}} \colon D\circ \theta_k + k > T_{(z-e_1) \cdot \vec{\ell}} \}$.
\end{enumerate}
Putting these observations together and using the triangle inequality we obtain (using the scaling by $\mathrm{Inv}(n) \approx n^{1/\gamma}$) for $j \ge 1$
\begin{equation*}
    \left|S^*_{n}(t_k) - \tilde{S}^*_{n}(t_k)\right| \mathds{1}_{\{z, j\}} \le \frac{1}{\mathrm{Inv}(n)} \tilde{\tau}_{j} \mathds{1}_{\{z, j\}} + \frac{1}{\mathrm{Inv}(n)} \left(S^*_{n}\left(t_k + \frac{\tilde{j}^+ - j^+}{n}\right) - S^*_{n}(t_k) \right)\mathds{1}_{\{z, j\}}.
\end{equation*}
Similarly,
\begin{equation*}
    \left|S^*_{n}(t_k) - \tilde{S}^*_{n}(t_k)\right| \mathds{1}_{\{z, 0\}} \le \frac{1}{\mathrm{Inv}(n)} (\tau_z^{-} - \tau_z^\mathrm{ini}) \mathds{1}_{\{z, 0\}} + \frac{1}{\mathrm{Inv}(n)} \left(S^*_{n}\left(t_k + \frac{\tilde{j}^+ - j^+}{n}\right) - S^*_{n}(t_k) \right)\mathds{1}_{\{z, 0\}}.
\end{equation*}
We observe that, under both $\mathbf{E}_{p(z)}$ and $\mathbf{E}_z$, the regeneration times in $S^*_{n}(t_k + \frac{\tilde{j}^+ - j^+}{n}) - S^*_{n}(t_k)$ happen in the part of the environment that is independent of $\mathcal{G}_{p(z)}$ (resp.\ $\mathcal{G}_{z}$). Hence they are distributed as $\tilde{\tau}_2$ under the measure $\mathbb{P}_{0}(\cdot | \, D = \infty)$. Thus we arrive at
\begin{equation}
\begin{split}
    \mathbf{E}_{p(z)}&\left[ E_0^\omega\left[ \Big(\sup_{j = 1, \dots, m} \left| S^*_{n}(t_j) - \tilde{S}^*_{n}(t_j) \right|\wedge 1 \Big)\mathds{1}_{\{X \text{ visits }\mathcal{V}_z\}}\mathds{1}_{E_{n, \varepsilon}} \right] \right] \label{eqn:BoundClockJoint} \\
    &\le \mathbf{E}_{p(z)}\left[ E_0^\omega\left[ \left(\frac{(\tau_z^{-} - \tau_z^\mathrm{ini})}{\mathrm{Inv}(n)} \wedge 1 \right) \mathds{1}{\{z, 0\}} \mathds{1}_{E_{n, \varepsilon}} \right] \right] + \mathbf{E}_{p(z)}\left[ E_0^\omega\left[\sum_{j = 1}^n \left(\frac{\tilde{\tau}_j}{\mathrm{Inv}(n)} \wedge 1 \right) \mathds{1}{\{z, j\}} \mathds{1}_{E_{n, \varepsilon}} \right] \right] \\
    &+ m \mathbf{E}_{p(z)}\left[E^\omega_0\left[\mathds{1}_{\{ X \text{ visits } \mathcal{V}_z \}}\right]\right] \mathbb{E}^K_0\left[\left(\frac{\tau_{Cn^\varepsilon}}{\mathrm{Inv}(n)} \wedge 1\right) \big| D = \infty\right].
\end{split}
\end{equation}
Note that, to obtain the third term, we used the fact that the regeneration time $\tau_{j^+}$ happens on the part of the environment which is independent of the part of the environment used by the random variable $\mathds{1}_{\{ X \text{ visits } \mathcal{V}_z \}}$. Furthermore, we have used the fact that, under $\mathbb{P}^K_{0}(\cdot | \, D = \infty)$, it holds that $\tau_{u} = \tau_{u + k} - \tau_{k}$ (in distribution) for arbitrary $u, k \in \mathbb{N}$. 

Moreover we notice, by the intermediate step \eqref{eqn:MeanTruncated} in the proof of Proposition~\ref{prop:MomentsSumHT}, that
\begin{equation}\label{eqn:TruncJoint}
    \mathbb{E}^K_0\left[\left(\frac{\tau_{Cn^\varepsilon}}{\mathrm{Inv}(n)} \wedge 1\right) \big| D = \infty\right] \le n^{-1+ c_3\varepsilon},
\end{equation}
for an absolute constant $c_3>0$. To bound $\mathbf{E}\left[ \left(\Delta_{z}^n \right)^2\right]$ from above we use \eqref{eqn:FourTerms}, the identity $(a+b)^2 \le 2(a^2+b^2)$ repeatedly, \eqref{eqn:PositionBoundJoint} and \eqref{eqn:TruncJoint} to obtain, for some constant $C>0$,
\begin{equation}
    \begin{split}\label{eqn:ThreeTerms}
         \mathbf{E}\left[ \left(\Delta_{z}^n \right)^2\right] &\le Cn^{-1 + c_4 \varepsilon} \mathbf{E}\left[ E_0^\omega\left[\mathds{1}_{\{ X \text{ visits } \mathcal{V}_z \}} \right]^2 \right]+C\mathbf{E}\left[ E_0^\omega\left[ \left(\frac{(\tau_z^{-} - \tau_z^\mathrm{ini})}{\mathrm{Inv}(n)} \wedge 1 \right) \mathds{1}{\{z, 0\}} \mathds{1}_{E_{n, \varepsilon}} \right]^2\right]\\
         &+C\mathbf{E}\left[ E^\omega_0\left[ \sum_{j = 1}^{c_2n} \left(\frac{\tilde{\tau}_j}{\mathrm{Inv}(n)} \wedge 1\right) \mathds{1}_{\{z, j\}}\mathds{1}_{E_{n, \varepsilon}}\right]^2  \right].
    \end{split}
\end{equation}
Firstly, we note that we already bounded the first term in Step~5 of Section~\ref{sect:MartDiffPos} and obtained
\begin{equation}
    \sum_{z \in \mathcal{B}_{0}(c_1 n, (c_1 n)^\alpha)}Cn^{-1 + c_4 \varepsilon} \mathbf{E}\left[ E_0^\omega\left[\mathds{1}_{\{ X \text{ visits } \mathcal{V}_z \}} \right]^2 \right] \le Cn^{-c},
\end{equation}
for some constant $c>0$.\\

\noindent \textbf{Step 6: The first regeneration.}
We deal now with the second term in \eqref{eqn:ThreeTerms}. We begin by noticing that
\begin{multline*}
    \sum_{z \in \mathcal{B}_{0}(c_1 n, (c_1 n)^\alpha)} \mathbf{E}\left[ E_0^\omega\left[ \left(\frac{(\tau_z^{-} - \tau_z^\mathrm{ini})}{\mathrm{Inv}(n)} \wedge 1 \right) \mathds{1}_{\{z, 0\}} \mathds{1}_{E_{n, \varepsilon}} \right]^2\right]\\
    \le Cn^{d\alpha\varepsilon} \sup_{0 \le |z \cdot \vec{\ell}| \le n^\varepsilon} \mathbb{E}_{0}\left[ \left(\frac{(\tau_z^{-} - \tau_z^\mathrm{ini})}{\mathrm{Inv}(n)} \wedge 1 \right)^2 \right] + Cn^{-M}.
\end{multline*}
Here, we exploited the fact that $\mathbb{P}_{0}(\{z, 0\}) \le Cn^{-M}$ for all $z$ with $|z \cdot \vec{\ell}| \ge n^{\varepsilon}$. From Proposition~\ref{prop:FirstReg} we obtain 
\begin{equation*}
    Cn^{d\alpha\varepsilon} \sup_{0 \le |z \cdot \vec{\ell}| \le n^\varepsilon} \mathbb{E}_{0}\left[ \left(\frac{(\tau_z^{-} - \tau_z^\mathrm{ini})}{\mathrm{Inv}(n)} \wedge 1 \right)^2 \right] \le Cn^{-c},
\end{equation*}
which makes the term negligible. Note that in applying Proposition~\ref{prop:FirstReg} we used the fact that $\mathrm{Inv}(n) \ge cn^{1/\gamma - \varepsilon}$ for any $\varepsilon>0$.\\

\noindent \textbf{Step 7: Reduction to increments of regeneration times during crossings.}
Finally, we are only left to deal with the third term in \eqref{eqn:ThreeTerms}, that is
\begin{equation*}
    \sum_{z \in \mathcal{B}_{0}(c_1 n, (c_1 n)^\alpha)} \mathbf{E}\left[ E^\omega_0\left[ \sum_{j = 1}^{c_2n} \left(\frac{\tilde{\tau}_j}{\mathrm{Inv}(n)} \wedge 1\right) \mathds{1}_{\{z, j\}} \mathds{1}_{E_{n, \varepsilon}}\right]^2  \right].
\end{equation*}
We use the two random walks evolving in the same environment $X^{(1)}$ and $X^{(2)}$. Then the quantity above becomes
\begin{equation}\label{eqn:2WalksIncrements}
    \sum_{z \in \mathcal{B}_{0}(c_1 n, (c_1 n)^\alpha)}\mathbf{E}\left[ E_0^\omega\left[ \sum_{j = 1}^n \mathds{1}^{(1)}_{\{z, j\}} \mathds{1}_{E_{n, \varepsilon}^{(1)}}\left(\frac{\tilde{\tau}^{(1)}_{j}}{\mathrm{Inv}(n)} \wedge 1\right) \right] E_{0}^\omega\left[ \sum_{k = 1}^n \mathds{1}^{(2)}_{\{z, k\}} \mathds{1}_{E_{n, \varepsilon}^{(2)}} \left(\frac{\tilde{\tau}^{(2)}_{k}}{\mathrm{Inv}(n)} \wedge 1\right) \right]\right].
\end{equation} 
Before proceeding, we define the random sets 
\begin{equation}\label{eqn:RandomIndeces}
	\begin{split}
		J^{(1)}_n &\coloneqq \left\{j = 1, \dots ,n \colon \{X^{(1)}\}_{\tau_j}^{\tau_{j+1} - 1} \cap \{X^{(2)}\}_{0}^{\tau_n} \neq \emptyset\right\} \\
		J^{(2)}_n &\coloneqq \left\{j = 1, \dots ,n \colon \{X^{(2)}\}_{\tau_j}^{\tau_{j+1} - 1} \cap \{X^{(1)}\}_{0}^{\tau_n} \neq \emptyset\right\}.
	\end{split}
\end{equation}
We observe that:
\begin{enumerate}
	\item Two indices $j, k$ in the sum \eqref{eqn:2WalksIncrements} are present if and only if there exists $z \in \mathcal{B}_{0}(c_1 n, (c_1 n)^\alpha)$ on which the two walks meet.
	\item A pair of indices $j \in J^{(1)}_n$ and $k \in J^{(2)}_n$ is counted at most $n^{\varepsilon}$ times when summing over $z \in \mathcal{B}_{0}(c_1 n, (c_1 n)^\alpha)$. Indeed, this is guaranteed by the good event $E^{(1)}_{n, \varepsilon} \cap E^{(2)}_{n, \varepsilon}$.
\end{enumerate}
Thus, we can bound from above the sum \eqref{eqn:2WalksIncrements} by 
\begin{equation*}
	n^{\varepsilon}\mathbf{E}\left[ E_{0, 0}^\omega\left[ \sum_{j \in J^{(1)}_n, k \in J^{(2)}_n}\left(\frac{\tilde{\tau}^{(1)}_{j}}{\mathrm{Inv}(n)} \wedge 1\right) \left(\frac{\tilde{\tau}^{(2)}_{k}}{\mathrm{Inv}(n)} \wedge 1\right) \right]\right].
\end{equation*} 
This fact, together with the estimate $\mathrm{Inv}(n) \ge c n^{1/\gamma - \varepsilon}$, yields the final bound
\begin{equation}\label{eqn:FinalBoundClockVar}
	n^{3\varepsilon}\mathbf{E}\left[ E_{0, 0}^\omega\left[ \sum_{j \in J^{(1)}_n, k \in J^{(2)}_n}\left(\frac{\tilde{\tau}^{(1)}_{j}}{n^{1/\gamma}} \wedge 1\right) \left(\frac{\tilde{\tau}^{(2)}_{k}}{n^{1/\gamma}} \wedge 1\right) \right]\right].
\end{equation} 
Finally, the polynomial upper bound for \eqref{eqn:BoundVarIntersectionJoint} can be deduced by applying Cauchy-Schwarz inequality to \eqref{eqn:FinalBoundClockVar} above together with Proposition~\ref{prop:KeyBoundClock}. The proof is now concluded.
\end{proof}

\begin{remark1}
    In our bounding procedure we seem to lose quite a bit since, in principle, it is sufficient to take the product of regeneration times that cross each other, while we take the product of the sums which is clearly a larger quantity. However, we are not losing much with this simplification because of the nature of heavy tailed random variables. Indeed, one can expect the sum over $j \in J^{(1)}_n$ to be of the same order of magnitude of the maximum. But then, since we have a bad control of the correlation between $\tilde{\tau}^{(1)}_j, j \in J^{(1)}_n$ and $\tilde{\tau}^{(2)}_k, k \in J^{(2)}_n$ when a crossing occurs (and we should not expect to have a good one), we will need to use Cauchy-Schwarz to obtain an upper bound. Thus, the upper bound above is, in some sense, tight or at least tight enough.
\end{remark1}

\section{The tail of regeneration times associated to intersections} \label{sect:Cross}
The main goal of this section is to prove Proposition~\ref{prop:KeyBoundClock}, which is the key ingreadient to bound \eqref{eqn:FinalBoundClockVar}.

We define the event 
\begin{equation}\label{eqn:DefLT}
    LT(t) \coloneqq \left\{ \text{there exists } e \in \mathbb{Z}^d \colon c_*^{\omega}(e) \ge t, T_{e}< \tau_1\right\},
\end{equation}
where we recall that $T_e$ is the hitting time of the edge $e$ seen as a set. We slightly extend this definition to
\[LT_i(t) \coloneqq \left\{ \text{there exists } e \in \mathbb{Z}^d \colon c_*^{\omega}(e) \ge t, \tau_i\le T_{e}< \tau_{i+1}\right\}.\]
Furthermore, we define $k_{j}^{(i)} \in \mathbb{N}, i=1,2$ to be the unique index such that $\tau^{(i)}_{k_{j}^{(i)}} = T^{(i)}_{\mathcal{L}_j}$.

\begin{lemma}\label{lemma:SmallProb}
Uniformly over all $t\in[0, n^{1/\gamma}]$ and $U_1, U_2 \in \mathcal{U}$, for all $\varepsilon>0$ there exists $K_0>0$ such that for all $K \ge K_0$ we have, for $i = 1, 2$
\begin{equation*}
    \mathbb{P}^K_{U_1, U_2}\left( \bigcup_{j = 0}^{k^{(i)}_1} LT_j^{(i)}(t)\mid  \mathcal{D}^\bullet = \infty \right) \le Cn^{\varepsilon/2}\mathbb{P}^K_{0}\left( LT(t) \mid D = \infty\right) \le Cn^{\varepsilon} \mathbf{P}\left( c_* \ge t \right),
\end{equation*}
for a constant $C = C(K, \lambda, \vec{\ell}, d)$.
\end{lemma}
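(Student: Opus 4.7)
The plan is to prove the two inequalities separately. For the first, I reduce the union over $j \in [0, k_1^{(i)}]$ to a uniform single-regeneration estimate. Since Part~2 of Definition~\ref{def:RegTimesWellDef} forces each regeneration to advance $X^{(i)}\cdot\vec{\ell}$ by at least $2/\sqrt{d}$, we have $k_1^{(i)} \le C(\mathcal{L}_1 + 1)$; Part~4 of Definition~\ref{def:JointRegLevWellDef} gives $\mathbb{P}^K_{U_1,U_2}(\mathcal{L}_1 > n^{\varepsilon/2}) \le Cn^{-M}$ for $K$ large, and combining this with $\mathbb{P}^K_{U_1,U_2}(\mathcal{D}^\bullet = \infty) \ge \eta$ (Part~1 of Definition~\ref{def:JointRegLevWellDef}) a union bound reduces the first inequality to the uniform estimate
\[
\mathbb{P}^K_{U_1,U_2}\!\left(LT^{(i)}_j(t) \,\Big|\, \mathcal{D}^\bullet = \infty\right) \le C\,\mathbb{P}^K_0(LT(t) \mid D = \infty), \qquad j \le n^{\varepsilon/2},\ U_1, U_2 \in \mathcal{U}.
\]

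To prove this uniform estimate I would chain the following: drop the conditioning at cost $1/\eta$; use the inclusion $\{\mathcal{D}^\bullet = \infty\} \subseteq \{D^{(i)} = \infty\}$ (noted right after Theorem~\ref{theo:FLSRegStruct}) to insert the event $\{D^{(i)} = \infty\}$; marginalise over the other walker, since $LT^{(i)}_j$ is measurable with respect to $(X^{(i)}, \omega)$ alone, to reduce to $\mathbb{P}^K_{U_i}(LT_j(t), D = \infty)$; and translate by $-U_i$ using the iid product structure of $\mathbf{P}$ to place the starting point at the origin. For $j = 0$ the identity $LT_0 = LT$ and $\mathbb{P}^K_0(LT(t), D=\infty) \le \mathbb{P}^K_0(LT(t)\mid D=\infty)$ close the argument. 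For $j \ge 1$, Part~3 of Definition~\ref{def:RegTimesWellDef} identifies the law of the shifted walk after $\tau_j$ with that of a walk under $\mathbb{P}^K_0(\cdot \mid D = \infty)$, which yields $\mathbb{P}^K_0(LT_j(t)) = \mathbb{P}^K_0(LT(t) \mid D = \infty)$, and the bound follows by simply dropping the event $\{D = \infty\}$.

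For the second inequality I first remove the conditioning at cost $1/\eta$ (using Part~1 of Definition~\ref{def:RegTimesWellDef}) and split on $\{\chi_1 \le R\}$ with $R = n^{\varepsilon/(2d\alpha)}$. On $\{\chi_1 \le R\}$ the trajectory up to $\tau_1$ is contained in the tilted box $\mathcal{B}_0(R, R^\alpha)$, which has at most $CR^{d\alpha} = Cn^{\varepsilon/2}$ edges; an annealed union bound combined with the stationarity identity $\mathbb{P}^K_0(c_*(e) \ge t, T_e < \tau_1) \le \mathbf{P}(c_* \ge t)$ (from the iid structure of $\mathbf{P}$) bounds the main contribution by $Cn^{\varepsilon/2}\mathbf{P}(c_* \ge t)$. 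The complementary event $\{\chi_1 > R\}$ has probability $O(n^{-M\varepsilon/(2d\alpha)})$ by Part~4 of Definition~\ref{def:RegTimesWellDef}, negligible compared to $\mathbf{P}(c_* \ge t) \ge cn^{-1-o(1)}$ (valid for $t \le n^{1/\gamma}$ by slow variation of $L$) once $K$ is chosen large. The main conceptual obstacle is the regeneration identity for $j \ge 1$ in the second paragraph; the $j = 0$ case requires separate treatment because the unconditional first-interval probability $\mathbb{P}^K_0(LT(t))$ is not a priori comparable to $\mathbb{P}^K_0(LT(t)\mid D=\infty)$, and is handled by working with the joint probability $\mathbb{P}^K_0(LT(t), D=\infty)$ and exploiting the factorisation $\mathbb{P}(A, B) = \mathbb{P}(A\mid B)\mathbb{P}(B) \le \mathbb{P}(A\mid B)$.
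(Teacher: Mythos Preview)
Your proposal is correct and follows essentially the same route as the paper: control $k_1^{(i)}$ through the tail of $\mathcal{L}_1$ (Part~4 of Definition~\ref{def:JointRegLevWellDef}), union-bound over the resulting $O(n^{\varepsilon'})$ regeneration intervals, use the regeneration structure (Part~3 of Definition~\ref{def:RegTimesWellDef}) to identify each $LT_j$ with a fresh copy of $LT$ under $\mathbb{P}^K_0(\cdot\mid D=\infty)$, and bound the single-interval probability by confining to the box of size $\chi_1$ and union-bounding over its $O(n^{\varepsilon'})$ edges. The only difference is organisational: you prove the two inequalities in the display separately, whereas the paper bypasses the intermediate quantity $\mathbb{P}^K_0(LT(t)\mid D=\infty)$ and bounds everything directly by $Cn^{2\varepsilon'}\mathbf{P}(c_*\ge t)$; since only this final bound is used downstream (in Lemma~\ref{lemma:WhenCrossNotLarge}), the paper's shortcut loses nothing.
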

\begin{proof} We write the proof for the first walk, that is we fix $i = 1$; the argument for $i = 2$ is the same. Recalling Lemma~\ref{lemma:GoodEventsBoxes}, for $M>0$ arbitrarily large we obtain
    \begin{align*}
        \mathbb{P}^K_{U_1, U_2}\left( LT^{(1)}(t)| \mathcal{D}^\bullet = \infty \right) &= \mathbb{P}^K_{U_1, U_2}\left( LT^{(1)}(t), \, \chi > n^{\varepsilon} | \mathcal{D}^\bullet = \infty \right) + \mathbb{P}^K_{U_1, U_2}\left( LT^{(1)}(t), \, \chi \le n^{\varepsilon}| \mathcal{D}^\bullet = \infty \right) \\
        &\le Cn^{-M} + \mathbb{P}^K_{U_1, U_2}\left( \exists e \in \mathcal{B}_{U_1}(n^{\varepsilon}, n^{\varepsilon\alpha}) \colon c_*(e) \ge t | \mathcal{D}^\bullet = \infty \right)\\
        &\le Cn^{-M} + n^{\alpha d \varepsilon} \mathbf{P}(c_* \ge t)\\
        &= Cn^{-M} + n^{\varepsilon'} \mathbf{P}(c_* \ge t) \le C n^{\varepsilon'} \mathbf{P}(c_* \ge t).
    \end{align*}
    The second inequality follows from the translation invariance of the environment and a union bound. With the same argument we obtain, uniformly over $U_1 \in \mathbb{Z}^d$,
    \begin{equation*}
        \mathbb{P}^K_{U_1}\left( LT^{(1)}(t)| D^{(1)} = \infty \right)  \le C n^{\varepsilon'} \mathbf{P}(c_* \ge t).
    \end{equation*}
    Furthermore, let us consider the event $\{k^{(1)}_1 \le n^{\varepsilon'}\}$; using that for all $L \ge 0$ there are at most $Ld$ regeneration times such that $0 \le X_{\tau_j} \le L$, together with Lemma~\ref{lemma:GoodEventsBoxes}, we obtain  
    \begin{equation*}
        \mathbb{P}^K_{U_1, U_2}\left( k^{(1)}_1 > n^{\varepsilon'}| \mathcal{D}^\bullet = \infty \right) \le \mathbb{P}^K_{U_1, U_2}\left( \mathcal{L}_1 > \frac{1}{d}n^{\varepsilon'}| \mathcal{D}^\bullet = \infty \right) \le Cn^{-M}.
    \end{equation*}
    Then we write
    \begin{align*}
        \mathbb{P}^K_{U_1, U_2}\left( \bigcup_{j = 0}^{k^{(1)}_1} LT_j^{(1)}(t)| \mathcal{D}^\bullet = \infty \right) &=\mathbb{P}^K_{U_1, U_2}\left( \bigcup_{j = 0}^{k^{(1)}_1} LT_j^{(1)}(t), k^{(1)}_1 \le n^{\varepsilon'}| \mathcal{D}^\bullet = \infty \right) + \mathbb{P}^K_{U_1, U_2}\left( k^{(1)}_1 > n^{\varepsilon'}| \mathcal{D}^\bullet = \infty \right)\\
        &\le \mathbb{P}^K_{U_1, U_2}\left( \bigcup_{j = 0}^{n^{\varepsilon'}} LT_j^{(1)}(t)| \mathcal{D}^\bullet = \infty \right) + Cn^{-M}.
    \end{align*}
    By the properties of the regeneration structure given in Definition~\ref{def:RegTimesWellDef} we obtain
    \begin{equation*}
        \mathbb{P}^K_{U_1, U_2}\left( \bigcup_{j = 0}^{n^{\varepsilon'}} LT_j^{(1)}(t)| \mathcal{D}^\bullet = \infty \right) \le \mathbb{P}^K_{U_1, U_2}\left( LT^{(1)}(t)| \mathcal{D}^\bullet = \infty \right) + \mathbb{P}^K_{0}\left(\bigcup_{j = 1}^{n^{\varepsilon'}} LT_j^{(1)}(t)| D = \infty \right).
    \end{equation*}
    Note that the first term on the r.h.s.\ takes care of the first regeneration time after the joint regeneration level and, for the second term on the r.h.s., we use the translation invariance of $\mathbb{P}^K_{x}$. We also highlight that the random walk $\{X_{\tau_1 + j}^{(1)} - X_{\tau_1}^{(1)} \}_{j \ge 0}$ under the measure $\mathbb{P}^K_{U_1, U_2}\left( \cdot | D = \infty \right)$ is distributed as $\{X_{j}\}_{j \ge 0}$ under $\mathbb{P}^K_{0}$. Using a union bound and the estimates given at the beginning of the proof we obtain 
    \begin{equation*}
        \mathbb{P}^K_{U_1, U_2}\left( LT^{(1)}(t)| \mathcal{D}^\bullet = \infty \right) + \mathbb{P}^K_{0}\left(\bigcup_{j = 1}^{n^{\varepsilon'}} LT_j^{(1)}(t)| D = \infty \right) \le C n^{2\varepsilon'} \mathbf{P}(c_* \ge t).
    \end{equation*}
    The proof is finished as we can choose $\varepsilon'$ to be arbitrarily small by tuning $M$ through our choice of $K_0$. 
\end{proof}
Let us introduce the set of indices associated to the increments of regeneration time where no large conductance is met. For $i = 1, 2$
    \begin{equation}\label{eqn:DefLsmall}
        L^{(i)}_{\text{small}}(n) \coloneqq \left\{j = 1, \dots, n \colon LT^{(i)}_j(n^{\frac{3}{4\gamma}} )^c\right\}.
    \end{equation}
The next lemma shows that the set of indices $L^{(1)}_{\text{small}}(n)$ contains, with high probability, our set of interest $J^{(1)}(n)$.
\begin{lemma}\label{lemma:WhenCrossNotLarge}
For some absolute constant $C>0$, we have that
    \begin{equation*}
        \mathbb{P}_{0, 0}\left( J^{(1)}(n) \subseteq L^{(1)}_{\text{small}}(n)\right) \ge 1 - Cn^{-1/8}.
    \end{equation*}
    The same result holds for $J^{(2)}(n)$.
\end{lemma}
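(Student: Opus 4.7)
The plan is to bound the probability of the complementary event $\{J^{(1)}(n) \not\subseteq L^{(1)}_{\mathrm{small}}(n)\}$, which requires that some regeneration interval of $X^{(1)}$ indexed by $j \le n$ both crosses the trajectory of $X^{(2)}$ and visits an edge with conductance $\ge n^{3/(4\gamma)}$. The idea is to combine Proposition~\ref{prop:CrossingJRL}, which says that there are at most $n^{1/2 + O(\varepsilon)}$ joint regeneration levels at which the two walks are close (within $n^{10\varepsilon}$), with Lemma~\ref{lemma:SmallProb}, which controls the probability of a large conductance inside a single joint regeneration slab; the decorrelation between the geometric ``crossing'' event and the appearance of a single large conductance is provided by the Markov property at joint regeneration levels, exactly as advertised in the introduction.

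I would first restrict to the high-probability good event $G_n \coloneqq F^{(1)}_{n,\varepsilon} \cap F^{(2)}_{n,\varepsilon} \cap L_{n,\varepsilon} \cap \mathrm{FC}_{n,\varepsilon}$ introduced in Lemma~\ref{lemma:GoodEventsBoxes} and (the proof of) Proposition~\ref{prop:FewCrossing}; on $G_n$ all regeneration boxes have diameter at most $n^{\alpha\varepsilon}$, joint regeneration slabs have width at most $n^\varepsilon$ in direction $\vec{\ell}$, and no crossing takes place inside a joint regeneration slab whose two starting points lie at distance more than $n^{10\varepsilon}$. In particular, any $j \in J^{(1)}(n)$ must sit inside a joint regeneration slab indexed by some $k \in \mathrm{JRL}^\le(n, \varepsilon)$. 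Writing $\tilde L_k$ for the event that $X^{(1)}$ visits an edge of conductance $\ge n^{3/(4\gamma)}$ during its $k$-th joint regeneration slab, this yields the inclusion
\begin{equation*}
\{J^{(1)}(n) \not\subseteq L^{(1)}_{\mathrm{small}}(n)\} \cap G_n \,\subseteq\, \bigcup_{k\ge 1}\{k \in \mathrm{JRL}^\le(n, \varepsilon)\} \cap \tilde L_k.
\end{equation*}
A union bound together with the Markov property \eqref{eqn:MarkovJRL} at the joint regeneration points $U^{(i)}_k$ reduces the probability of the right hand side to
\begin{equation*}
\mathbb{E}_{0,0}\Bigg[\sum_{k\ge 1}\mathds{1}_{\{k \in \mathrm{JRL}^\le(n, \varepsilon)\}}\, \mathbb{P}^K_{U^{(1)}_k, U^{(2)}_k}\bigl(\tilde L_0 \,\big|\, \mathcal{D}^\bullet=\infty\bigr)\Bigg].
\end{equation*}
Since $\|U^{(1)}_k - U^{(2)}_k\| \le n^{10\varepsilon}$ on the indicator, a direct variant of Lemma~\ref{lemma:SmallProb} (in which the box threshold $n^\varepsilon$ in the proof is inflated to $n^{10\varepsilon}$, at the cost of an extra factor $n^{O(\varepsilon)}$) bounds the inner probability by $C n^{O(\varepsilon)} \mathbf{P}(c_* \ge n^{3/(4\gamma)}) \le C n^{-3/4 + O(\varepsilon)}$, using the regularly varying tail of $\mu$. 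Combining with $\mathbb{E}_{0,0}[|\mathrm{JRL}^\le(n, \varepsilon)|] \le C n^{1/2 + c\varepsilon}$ from Proposition~\ref{prop:CrossingJRL} produces the total bound $C n^{-1/4 + O(\varepsilon)}$, which is $\le C n^{-1/8}$ for $\varepsilon$ small enough (and $K$ correspondingly large); the negligible $G_n^c$ contribution is absorbed.

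The main obstacle I expect is the combinatorial step used in the middle paragraph: certifying that on $G_n$ every crossing really lives in a joint regeneration slab indexed by some $k \in \mathrm{JRL}^\le(n, \varepsilon)$. Making this precise requires combining the spatial containment from the box-diameter bounds $F^{(i)}_{n,\varepsilon}$, the width bound $L_{n,\varepsilon}$ on joint regeneration slabs, and the no-long-range-crossing mechanism $\mathrm{FC}_{n,\varepsilon}$ (whose verification is the content of the proof of Proposition~\ref{prop:FewCrossing}). Once this is in hand, the rest is essentially book-keeping, and the numerology is tight: the threshold $n^{3/(4\gamma)}$ is chosen precisely so that the tail $n^{-3/4}$ beats the count $n^{1/2}$ of close joint regeneration levels, leaving a polynomial margin that comfortably delivers $n^{-1/8}$.
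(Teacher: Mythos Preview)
Your proposal is correct and follows essentially the same route as the paper's proof: restrict to the good event, observe that on it every crossing regeneration interval lies in a joint regeneration slab indexed by some $k \in \mathrm{JRL}^\le(c_1 n,\varepsilon)$, apply the Markov property at the $k$-th joint regeneration level together with Lemma~\ref{lemma:SmallProb}, and then sum using Proposition~\ref{prop:CrossingJRL} to obtain $n^{-1/4+O(\varepsilon)}$. One small remark: your concern about needing a ``variant'' of Lemma~\ref{lemma:SmallProb} for starting points at distance $\le n^{10\varepsilon}$ is unnecessary, since the proof of that lemma only uses the regeneration box of the single walk $X^{(1)}$ and the tail of $\mathcal{L}_1$, neither of which depends on the orthogonal distance between $U_1$ and $U_2$.
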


\begin{proof}
We will write the proof only for the walk $X^{(1)}$, for simplicity let $L_{\text{small}}(n) = L^{(1)}_{\text{small}}(n)$. Consider the set $ \mathrm{JRL}^\le(n, \varepsilon)$ defined in \eqref{eqn:BadRegLevels}. Recall also the complement set of $\mathrm{JRL}^\le(n, \varepsilon)$ 
\begin{equation*}
    \mathrm{JRL}^>(n, \varepsilon) \coloneqq \left\{k \colon \mathcal{L}_k \le n \text{ and } \left\|X^{(1)}_{T^{(1)}_{\mathcal{L}_k}} - X^{(2)}_{T^{(2)}_{\mathcal{L}_k}}\right\|> n^{10\varepsilon}\right\}.
\end{equation*}
For $k = 1, \dots, n$ let us define the event 
\begin{equation}\label{eqn:CrossAfterJointReg}
    \mathrm{Bad}_n(k) \coloneqq \left\{ \exists j \in J^{(1)}(n) \colon T^{(1)}_{\mathcal{L}_k}\le \tau^{(1)}_{j} < T^{(1)}_{\mathcal{L}_{k+1}}, LT_j(n^{\frac{3}{4\gamma}}) \right\}.
\end{equation}
Note that, from their definition, the first $n$ regeneration times happen before $\mathcal{L}_n$, hence
\begin{equation*}
    \{J^{(1)}(n) \not\subseteq L_{\text{small}}(n) \} \subseteq \bigcup_{k=1}^n \mathrm{Bad}_n(k).
\end{equation*}
Recall the event $\mathrm{FC}_{c_1n, \varepsilon}^c$ that we defined in \eqref{eqn:CrossAfterFarJointReg} (we will characterize the constant $c_1>0$ shortly). By the observations already made in the proof of Proposition~\ref{prop:FewCrossing} we have $\mathrm{FC}_{c_1n, \varepsilon}^c \subseteq F_{n, \varepsilon}^c \cup L_{n, \varepsilon}^c$. On the event $\mathrm{FC}_{c_1n, \varepsilon}$ there is no $j \in J^{(1)}(n)$ such that $T^{(1)}_{\mathcal{L}_k} \le \tau^{(1)}_j \le T^{(1)}_{\mathcal{L}_{k+1}}$ whenever the index $k \in \mathrm{JRL}^>(c_1n, \varepsilon)$. Note that this occurs with probability $\mathbb{P}_{0, 0}(\mathrm{FC}_{c_1n, \varepsilon}^c) \le \mathbb{P}(F_{n, \varepsilon}^c \cup L_{n, \varepsilon}^c) \le Cn^{-M}$ by Lemma~\ref{lemma:GoodEventsBoxes}. We highlight the fact that $c_1$ is an absolute constant depending only on the parameters of the model and $K$ that ensures that $X^{(1)}_{\tau_n} \cdot \vec{\ell} \le c_1 n$. Note that we can fix $c_1$ in such a way that the event $X^{(1)}_{\tau_n} \cdot \vec{\ell} \le c_1 n$ happens with probability $\ge 1 - Cn^{-M}$ as we showed below \eqref{eqn:NotTooFar}. In formulas
\begin{align*}
    \mathbb{P}_{0, 0}&\left( \bigcup_{k=1}^n \mathrm{Bad}_n(k) \right) \\
    &\le \mathbb{P}_{0, 0}\left( \bigcup_{k=1}^n \mathrm{Bad}_n(k), F_{n, \varepsilon} \cap L_{n, \varepsilon} \cap \{X^{(1)}_{\tau_n} \cdot \vec{\ell} \le c_1 n\} \right) + \mathbb{P}_{0, 0}\left( F_{n, \varepsilon}^c \cup L_{n, \varepsilon}^c \cup \{X^{(1)}_{\tau_n} \cdot \vec{\ell}>c_1n\} \right)\\
    & \le \mathbb{P}_{0, 0}\left( \bigcup_{k=1}^n \mathrm{Bad}_n(k), F_{n, \varepsilon} \cap L_{n, \varepsilon} \cap \{X^{(1)}_{\tau_n} \cdot \vec{\ell} \le c_1 n\} \right) + Cn^{-M}.
\end{align*}
We also highlight that, by definition of the events and the observations we made above
\begin{equation*}
    \bigcup_{k=1}^n \mathrm{Bad}_n(k) \cap F_{n, \varepsilon} \cap L_{n, \varepsilon} \cap \{X^{(1)}_{\tau_n} \cdot \vec{\ell}\le c_1 n\} \subseteq \bigcup_{k=1}^n \mathrm{Bad}_n(k)\cap\{ k \in \mathrm{JRL}^\le(c_1n, \varepsilon)\}.
\end{equation*}
Putting things together we get the intermediate step 
\begin{align*}
        \mathbb{P}_{0, 0}\left( J^{(1)}(n) \not\subseteq L_{\text{small}}(n)\right) &\le \mathbb{P}_{0, 0}\left( \bigcup_{k=1}^n \mathrm{Bad}_n(k) \right) = \mathbb{E}_{0, 0}\left[ \mathds{1}_{\{\bigcup_{k=1}^n \mathrm{Bad}_n(k)\}} \right]\\
        &\le\mathbb{E}_{0, 0}\left[ \mathds{1}_{\{\bigcup_{k=1}^n \mathrm{Bad}_n(k)\cap\{ k \in \mathrm{JRL}^\le(c_1n, \varepsilon) \}\}} \right] + Cn^{-M}.
\end{align*}
Now consider the ordered indices in $\mathrm{JRL}^\le(c_1n, \varepsilon)$, we drop the constant $c_1$ for simplicity as it will be clear that the estimates will not depend on it. Note that the event $\{k \in \mathrm{JRL}^\le(n, \varepsilon) \}$ is measurable w.r.t.\ the sigma-algebra $\Sigma_k$ generated by the joint regeneration levels $\{\mathcal{L}_k,(X^{(1)}_{i})_{0 \le i \le T_{\mathcal{L}_k}},(X^{(2)}_{j})_{0 \le j \le T_{\mathcal{L}_k}}\}$.

Note that $\left\{ \exists j \colon T^{(1)}_{\mathcal{L}_k}\le \tau^{(1)}_{j} < T^{(1)}_{\mathcal{L}_{k+1}}, LT_j(n^{\frac{3}{4\gamma}}) \right\}$ is measurable w.r.t.\ what happens after $\mathcal{L}_k$. By \cite[Theorem~3.25]{QuenchedBiasedRWRC} and Lemma~\ref{lemma:SmallProb} we get that for any $k \in \mathbb{N}$
\begin{align*}
   \mathbb{E}_{0, 0}&\left[ \mathds{1}_{\{\mathrm{Bad}_n(k)\cap\{ k \in \mathrm{JRL}^\le(n, \varepsilon) \}\}} \right] = \mathbb{E}_{0, 0}\left[ \mathds{1}_{\{ k \in \mathrm{JRL}^\le(n, \varepsilon) \}}\mathbb{P}^K_{X^{(1)}_{T_{\mathcal{L}_k}},X^{(2)}_{T_{\mathcal{L}_k}}} \left(\mathrm{Bad}_n(k) | \, \mathcal{D}^\bullet =\infty\right) \right] \\
    &\le\mathbb{E}_{0, 0}\Bigg[ \mathds{1}_{\{ k \in \mathrm{JRL}^\le(n, \varepsilon) \}}\mathbb{P}^K_{X^{(1)}_{T_{\mathcal{L}_k}},X^{(2)}_{T_{\mathcal{L}_k}}} \Bigg( \bigcup_{j = 1}^{k^{(1)}_1} LT_j(n^{\frac{3}{4\gamma}}) | \, \mathcal{D}^\bullet =\infty\Bigg) \Bigg] \\
    &\le \mathbb{E}_{0, 0}\left[ \mathds{1}_{\{ k \in \mathrm{JRL}^\le(n, \varepsilon) \}} Cn^{2\varepsilon} \mathbf{P}\left( c^* \ge n^{\frac{3}{4\gamma}} \right) \right] \le \mathbb{E}_{0, 0}\left[ \mathds{1}_{\{ k \in \mathrm{JRL}^\le(n, \varepsilon) \}} \right] Cn^{-\frac{3}{4} + 3\varepsilon}.
\end{align*}
Here it is crucial that Lemma~\ref{lemma:SmallProb} is valid uniformly over the choice of the starting points. 
 We are now able to bound the remaining term
\begin{align*}
    \mathbb{E}_{0, 0}&\left[ \mathds{1}_{\{\bigcup_{k=1}^n \mathrm{Bad}_n(k)\cap\{ k \in \mathrm{JRL}^\le(n, \varepsilon) \}\}} \right] 
    \\&
    \le \mathbb{E}_{0, 0}\left[ \mathds{1}_{\{\bigcup_{k=1}^{n-1} \mathrm{Bad}_n(k)\cap\{ k \in \mathrm{JRL}^\le(n, \varepsilon) \}\}}  \right] + \mathbb{E}_{0, 0}\left[  \mathds{1}_{\{\mathrm{Bad}_n(n)\cap\{ n \in \mathrm{JRL}^\le(n, \varepsilon) \}\}}  \right]\\
    &\le \mathbb{E}_{0, 0}\left[ \mathds{1}_{\{\bigcup_{k=1}^{n-1} \mathrm{Bad}_n(k)\cap\{ k \in \mathrm{JRL}^\le(n, \varepsilon) \}\}}  \right] + \mathbb{E}_{0, 0}\left[ \mathds{1}_{\{n \in \mathrm{JRL}^\le(n, \varepsilon)\}} \right] Cn^{-\frac{3}{4} + 2\varepsilon}.
\end{align*}
Iterating this procedure we get to 
\begin{align*}
    \mathbb{E}_{0, 0}\left[ \mathds{1}_{\{\bigcup_{k=1}^n \mathrm{Bad}_n(k)\cap\{ k \in \mathrm{JRL}^\le(n, \varepsilon) \}\}} \right] \le \mathbb{E}_{0, 0}\left[ \sum_{k = 1}^n\mathds{1}_{\{k \in \mathrm{JRL}^\le(n, \varepsilon)\}} \right] Cn^{-\frac{3}{4} + 3\varepsilon}.
\end{align*}
By Proposition~\ref{prop:CrossingJRL} we get
\begin{equation*}
    \mathbb{E}_{0, 0}\left[ \sum_{k = 1}^n\mathds{1}_{\{k \in \mathrm{JRL}^\le(n, \varepsilon)\}} \right] \le n^{\frac{1}{2} + c\varepsilon}.
\end{equation*}
All together we have
\begin{equation}
    \mathbb{P}_{0, 0}\left( J^{(1)}(n) \not\subseteq L_{\text{small}}(n)\right) \le C n^{-\frac{1}{4} + c\varepsilon} \le Cn^{-1/8},
\end{equation}
as $\varepsilon>0$ is arbitrary and $c>0$ is an absolute constant, this concludes the proof. 
\end{proof}
\noindent We are now in the position to prove the main result of the section.
\begin{proposition} \label{prop:KeyBoundClock}
    We have that
    \begin{equation*}
        \mathbb{E}_{0, 0}\left[ \left( \sum_{j \in J^{(1)}} \left(\frac{\tilde{\tau}^{(1)}_j}{n^{1/\gamma}} \wedge 1\right) \right)^2\right] \le  Cn^{-c},
    \end{equation*}
    for two absolute constants $C, c>0$. The same result holds for $J^{(2)}(n)$.
\end{proposition}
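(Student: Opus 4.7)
The plan is to split $\mathbb{E}_{0,0}[\mathrm{sum}^2]$ according to the good event $E_n := \{J^{(1)}(n) \subseteq L^{(1)}_{\text{small}}(n)\}$ from Lemma~\ref{lemma:WhenCrossNotLarge}, combined with the auxiliary events $F_{n,\varepsilon} \cap L_{n,\varepsilon}$ from Lemma~\ref{lemma:GoodEventsBoxes}. Setting $G_n := E_n \cap F_{n,\varepsilon} \cap L_{n,\varepsilon}$, one has $\mathbb{P}_{0,0}(G_n^c) \le Cn^{-1/8}$; moreover, on $G_n$ every regeneration interval of $X^{(1)}$ that is hit by $X^{(2)}$ avoids edges of conductance $c_* \ge n^{3/(4\gamma)}$ and has its trajectory contained in a tilted box of diameter at most $n^\varepsilon$.

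The key ingredient for the main term is the deterministic estimate $\tilde{\tau}_j^{(1)} \le n^{3/(4\gamma) + c\varepsilon}$ for every $j \in J^{(1)}(n)$ on $G_n$, which should follow since a biased random walk inside a box of diameter $n^\varepsilon$ with all conductances bounded by $M = n^{3/(4\gamma)}$ has escape time of order $n^{c\varepsilon} M$ (each edge traps the walk for time proportional to its conductance, and ballistic drift bounds the number of visited edges). This gives $\tilde{\tau}_j^{(1)}/n^{1/\gamma} \wedge 1 \le n^{-1/(4\gamma) + c\varepsilon}$ on $G_n$. Grouping the sum along joint regeneration slabs and observing, as in the proof of Proposition~\ref{prop:FewCrossing}, that on $G_n$ crossings can occur only in slabs indexed by $\mathrm{JRL}^\le(n,\varepsilon)$, while $L_{n,\varepsilon}$ caps the number of $X^{(1)}$-intervals per slab by $n^\varepsilon$, yields
\begin{equation*}
\Big(\sum_{j \in J^{(1)}} (\tilde{\tau}_j^{(1)}/n^{1/\gamma} \wedge 1)\Big)^2 \mathbf{1}_{G_n} \le |\mathrm{JRL}^\le(n,\varepsilon)|^2 \cdot n^{-1/(2\gamma) + c\varepsilon}.
\end{equation*}
The sub-exponential bound \eqref{eqn:ProbFewCrossings} on $\mathrm{Cross}(n)$ from the proof of Proposition~\ref{prop:CrossingJRL} implies $\mathbb{E}_{0,0}[|\mathrm{JRL}^\le(n,\varepsilon)|^p] \le C_p n^{p/2 + c_p \varepsilon}$ for every $p \ge 1$, so that the expectation over $G_n$ is polynomially small.

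For the complement $\mathbb{E}_{0,0}[\mathrm{sum}^2 \mathbf{1}_{G_n^c}]$, one bounds the sum deterministically by $|J^{(1)}(n)| \le Cn^{c\varepsilon}|\mathrm{JRL}^\le(c_1 n,\varepsilon)|$ (up to a ballistic escape event of probability $\le Cn^{-M}$) and applies H\"{o}lder's inequality with a large exponent against $\mathbb{P}_{0,0}(G_n^c) \le Cn^{-1/8}$, using again the moment bounds from \eqref{eqn:ProbFewCrossings} to conclude that the bad contribution is also polynomially small. The corresponding statement for $J^{(2)}$ is identical by symmetry of the roles of the two walks. The main technical obstacle lies in the deterministic escape-time bound $\tilde{\tau}_j^{(1)} \le n^{3/(4\gamma) + c\varepsilon}$ on $G_n$: it requires careful use of the directional drift $\lambda$ together with the size control coming from $F_{n,\varepsilon}$, to ensure that the regeneration time across a single interval is polynomially dominated by the largest local conductance whenever the latter stays below the truncation threshold.
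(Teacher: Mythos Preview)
Your overall split into the good event $G_n$ and its complement matches the paper's, but both branches contain genuine gaps.

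\textbf{Bad event.} Your bound on $G_n^c$ does not close. You estimate $\sum_{j\in J^{(1)}}(\tilde{\tau}_j^{(1)}/n^{1/\gamma}\wedge 1)\le |J^{(1)}(n)|\le Cn^{c\varepsilon}|\mathrm{JRL}^\le|$ and then H\"older against $\mathbb{P}(G_n^c)\le n^{-1/8}$. But $\||\mathrm{JRL}^\le|\|_{2p}^2\sim n^{1+c\varepsilon}$ for every $p$, so the best you get is $n^{1+c\varepsilon}\cdot n^{-1/(8q)}$, which is never $\le n^{-c}$. The paper avoids this by enlarging the sum to \emph{all} $j=1,\dots,n$, i.e.\ bounding by $\sum_{j=1}^n(\tilde{\tau}_j/n^{1/\gamma}\wedge 1)$, and then invoking Proposition~\ref{prop:MomentsSumHT}, which gives $\mathbb{E}[(\text{full sum})^4]\le Cn^{4\varepsilon}$. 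Cauchy--Schwarz then yields $Cn^{2\varepsilon}\cdot n^{-1/16}$, which is small. The point is that the truncated increments have mean $\approx n^{-1}$, so the full sum is $O(1)$ in moments---your bound via $|J^{(1)}|$ throws this away.

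\textbf{Good event.} The ``deterministic estimate'' $\tilde{\tau}_j^{(1)}\le n^{3/(4\gamma)+c\varepsilon}$ on $G_n$ is false as stated: even in a box of diameter $n^\varepsilon$ with all conductances below $n^{3/(4\gamma)}$, the walk can take arbitrarily many steps (e.g.\ bounce on a single edge for a long time with small but positive probability). What is true is a \emph{tail bound}: $\mathbb{P}(\tilde{\tau}_j>n^{(1-\eta)/\gamma},\,LT_j(n^{3/(4\gamma)})^c)$ is small. This is exactly \cite[Lemma~6.2]{Kious_Frib}, which controls $\tau_1^{<t}$, the time spent on edges with $c_*<t$. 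The paper uses it to show that the event $\mathrm{NG}_n=\bigcup_j\{\tilde{\tau}_j>n^{(1-\eta)/\gamma}\}\cap LT_j(n^{3/(4\gamma)})^c$ has probability $\le n^{-\eta}$, handles $\mathrm{NG}_n$ via Cauchy--Schwarz plus Proposition~\ref{prop:MomentsSumHT} as above, and on the remainder bounds each term by $n^{-\eta/\gamma}$ and applies Lemma~\ref{lemma:MomentsSumHT2}. Your route can be repaired along these lines, but it then essentially coincides with the paper's argument; the intersection count $|\mathrm{JRL}^\le|$ is not needed on the good event once you have the pointwise truncation $\tilde{\tau}_j/n^{1/\gamma}\wedge 1\le n^{-\eta/\gamma}$.
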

\begin{proof}
Firstly we notice that 
    \begin{align}
        \mathbb{E}_{0, 0}&\left[ \left( \sum_{j \in J^{(1)}} \left(\frac{\tilde{\tau}^{(1)}_j}{n^{1/\gamma}} \wedge 1\right) \right)^2\right]\nonumber \\
        &= \mathbb{E}_{0, 0}\left[ \left( \sum_{j\in J^{(1)}} \left(\frac{\tilde{\tau}^{(1)}_j}{n^{1/\gamma}} \wedge 1\right) \right)^2\mathds{1}_{J^{(1)}(n) \subseteq L_{\text{small}}(n)}\right] + \mathbb{E}_{0, 0}\left[ \left( \sum_{j=1}^n \left(\frac{\tilde{\tau}^{(1)}_j}{n^{1/\gamma}} \wedge 1\right) \right)^2\mathds{1}_{J^{(1)}(n) \not\subseteq L_{\text{small}}(n)}\right]\nonumber\\
        &\le \mathbb{E}_{0, 0}\left[ \left( \sum_{j=1}^n \left(\frac{\tilde{\tau}^{(1)}_j}{n^{1/\gamma}} \wedge 1\right)\mathds{1}_{LT_j(n^{\frac{3}{4\gamma}})^c} \right)^2\right] + \mathbb{E}_{0, 0}\left[ \left( \sum_{j=1}^n \left(\frac{\tilde{\tau}^{(1)}_j}{n^{1/\gamma}} \wedge 1\right) \right)^2\mathds{1}_{J^{(1)}(n) \not\subseteq L_{\text{small}}(n)}\right].\label{eqn:TwoTermsTruncated} 
    \end{align}
    Let us justify the inequality, note that on the event $J^{(1)}(n) \subseteq L_{\text{small}}(n)$ then the event $LT_j(n^{\frac{3}{4\gamma}})^c$ happens for all $j \in J^{(1)}(n)$. This implies
    \begin{equation*}
        \left( \sum_{j\in J^{(1)}} \left(\frac{\tilde{\tau}^{(1)}_j}{n^{1/\gamma}} \wedge 1\right) \right)^2\mathds{1}_{J^{(1)}(n) \subseteq L_{\text{small}}(n)} \le \left( \sum_{j\in L_{\text{small}}(n)} \left(\frac{\tilde{\tau}^{(1)}_j}{n^{1/\gamma}} \wedge 1\right) \right)^2 = \left( \sum_{j=1}^n \left(\frac{\tilde{\tau}^{(1)}_j}{n^{1/\gamma}} \wedge 1\right)\mathds{1}_{LT_j(n^{\frac{3}{4\gamma}})^c} \right)^2.
    \end{equation*}
    We bound the two terms in \eqref{eqn:TwoTermsTruncated} separately. Applying Cauchy-Schwarz and Proposition~\ref{prop:MomentsSumHT} we get
    \begin{equation}\label{eqn:CSBound}
    \begin{split}
        \mathbb{E}_{0, 0}&\left[ \left( \sum_{j=1}^n \left(\frac{\tilde{\tau}^{(1)}_j}{n^{1/\gamma}} \wedge 1\right) \right)^2\mathds{1}_{J^{(1)}(n) \not\subseteq L_{\text{large}}(n)}\right]\\&\le \mathbb{E}_{0, 0}\left[ \left( \sum_{j=1}^n \left(\frac{\tilde{\tau}^{(1)}_j}{n^{1/\gamma}} \wedge 1\right) \right)^4\right]^{1/2} \mathbb{P}_{0, 0}\left( J^{(1)}(n) \not\subseteq L_{\text{small}}(n)\right)\\
        &\le C n^{4\varepsilon} n^{-1/8}.
    \end{split}
    \end{equation}
    This takes care of the second term. The first term is more involved. Let us recall the quantities defined in \cite[(6.2),(6.3)]{Kious_Frib}, let $E_{<t}$ be the set of edges $e \in \mathbb{Z}^d$ with $c_{*}(e)<t$,
    \begin{equation*}
        \tau_1^{\ge t} \coloneqq \sum_{e \in E^c_{<t} \cap \{\mathcal{R}_0\setminus \mathcal{E}_0 \}} |\{k \in [1, \tau_1] \colon [X_{k-1}, X_{k}] = e\}|.
    \end{equation*}
    Moreover, $\tau_{1}^{< t} = \tau_1 - \tau_1^{\ge t}$. Furthermore define, for $* = <, \ge$ 
    \begin{equation*}
        \tilde{\tau_j}^{* t} \coloneqq \tau_{1}^{* t}\left(\{X_{\tau_{j} + i} - X_{\tau_{j}}\}_{i \ge 1}\right)
    \end{equation*} 
    Note that, $\tilde{\tau_j}^{\ge t}$ is ``simply'' the amount of time the random walk spends on large conductances in the time interval $[\tau_{j}, \tau_{j+1}]$. It is obvious that $ \tilde{\tau_j}^{< t} \ge \tilde{\tau_j} \mathds{1}_{LT_{j}(t)^c},$ indeed, they coincide on $LT_{j}(t)^c$ and the left hand side is non zero on $LT_{j}(t)$. Fix $\eta>0$ (we will choose its value small enough later) and define the event
    \begin{equation}
      \mathrm{NG}_n \coloneqq  \bigcup_{i=1}^{n}\{\tilde{\tau}_{j} > n^{\frac{1 - \eta}{\gamma}}\}\cap LT_j(n^{\frac{3}{4\gamma}})^c.
    \end{equation}
    By the observation above
    \begin{align*}
        \mathbb{P}_{0, 0} \left(\mathrm{NG}_n\right) \le n \mathbb{P}_{0, 0} \left( \tilde{\tau}_{1}^{<n^{\frac{3}{4\gamma}}} > n^{\frac{1 - \eta}{\gamma}}\right) \le n \cdot n^{-1 + \eta - \frac{(1 - 3\frac{1-\eta}{4})(1-\gamma)}{2}} \le n^{-\eta}.
    \end{align*}
    In the second inequality we applied \cite[Lemma~6.2]{Kious_Frib}. Note that, crucially, $(1 - 3\frac{1-\eta}{4})$ is arbitrarily close to $1/4$ for $\eta$ small enough. Proceeding as in \eqref{eqn:CSBound} we obtain
    \begin{equation*}
        \mathbb{E}_{0, 0}\left[ \left( \sum_{j=1}^n \left(\frac{\tilde{\tau}^{(1)}_j}{n^{1/\gamma}} \wedge 1\right) \right)^2\mathds{1}_{\mathrm{NG}_n}\right]\le C n^{4\varepsilon} n^{-\eta} \le C n^{-\eta/2}.
    \end{equation*}
    Hence, we are only left with the task of bounding
    \begin{equation*}
        \mathbb{E}_{0, 0}\left[ \left( \sum_{j=1}^n \left(\frac{\tilde{\tau}^{(1)}_j}{n^{1/\gamma}} \wedge 1\right)\mathds{1}_{\tilde{\tau}_{j} \le n^{(1-\eta)/\gamma}} \right)^2\right] \le \mathbb{E}_{0, 0}\left[ \left( \sum_{j=1}^n \left(\frac{\tilde{\tau}^{(1)}_j}{n^{1/\gamma}} \wedge n^{-\eta}\right) \right)^2\right],
    \end{equation*}
    a direct application of Lemma~\ref{lemma:MomentsSumHT2} concludes the proof with $c = \eta \gamma /2$. The proof for $J^{(2)}(n)$ follows identically.
\end{proof}

\subsection{The first regeneration period}

\begin{proposition}\label{prop:FirstReg}
    There exist two positive constants $C, c > 0$ such that
    \begin{equation*}
        \sup_{0 \le |z \cdot \vec{\ell}| \le n^\varepsilon} \mathbb{E}_{0}\left[ \left(\frac{(\tau_z^{-} - \tau_z^\mathrm{ini})}{n^{1/\gamma}} \wedge 1 \right)^2 \right] \le Cn^{-c}.
    \end{equation*}
\end{proposition}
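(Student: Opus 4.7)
The plan is to reduce $\tau_z^- - \tau_z^\mathrm{ini}$ to the hitting time of level $(z-e_1)\cdot\vec{\ell}$ by a purely deterministic argument, bound this hitting time by $\tau_m$ for some $m = O(n^\varepsilon)$, and finally control the truncated second moment of the resulting short sum of heavy-tailed regeneration increments. The case $z\cdot\vec{\ell}\le 0$ is trivial: then $T_{(z-e_1)\cdot\vec{\ell}} = 0$, which forces $\tau_z^\mathrm{ini} = \tau_z^- = 0$, so I restrict to $z\cdot\vec{\ell}\in (0, n^\varepsilon]$ in what follows.

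For the first step, directly from the definitions one has $0 \le \tau_z^\mathrm{ini} \le \tau_z^- \le T_{(z-e_1)\cdot\vec{\ell}}$, hence $\tau_z^- - \tau_z^\mathrm{ini} \le T_{(z-e_1)\cdot\vec{\ell}}$ almost surely. The deterministic inequality $X_{\tau_j}\cdot\vec{\ell} - X_{\tau_{j-1}}\cdot\vec{\ell} \ge 2/\sqrt{d}$ recorded just below Theorem~\ref{theo:FKRegStruct} implies that, for $m \coloneqq \lceil \sqrt{d}(z-e_1)\cdot\vec{\ell}/2\rceil$, one has $X_{\tau_m}\cdot\vec{\ell} \ge (z-e_1)\cdot\vec{\ell}$ almost surely, so $T_{(z-e_1)\cdot\vec{\ell}}\le \tau_m$; since $|z\cdot\vec{\ell}|\le n^\varepsilon$, one can take $m\le C_1 n^\varepsilon$ for an absolute constant $C_1 > 0$. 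This yields the pathwise bound $\tau_z^- - \tau_z^\mathrm{ini} \le \tau_{C_1 n^\varepsilon}$, $\mathbb{P}_0$-almost surely.

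For the second step, setting $\tilde\tau_j\coloneqq \tau_j - \tau_{j-1}$ (with $\tau_0\coloneqq 0$), the subadditivity $(a+b)\wedge 1\le (a\wedge 1) + (b\wedge 1)$ combined with Cauchy--Schwarz gives
\begin{equation*}
\mathbb{E}_0\left[\left(\frac{\tau_{C_1 n^\varepsilon}}{n^{1/\gamma}}\wedge 1\right)^2\right] \le C_1^2\, n^{2\varepsilon}\,\sup_{1\le j\le C_1 n^\varepsilon}\mathbb{E}_0\left[\left(\frac{\tilde\tau_j}{n^{1/\gamma}}\wedge 1\right)^2\right],
\end{equation*}
so it remains to bound each single-increment term by $C n^{-1+\delta}$ for some small $\delta > 0$.

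For this last step, by properties 4 and 5 of Definition~\ref{def:RegTimesWellDef} both $\tilde\tau_1$ under $\mathbb{P}_0$ and $\tilde\tau_j$ for $j\ge 2$ (under $\mathbb{P}_0^K(\cdot\mid D^{\vec{\ell}}=\infty)$) have tails of the form $\mathbb{P}(\tilde\tau_j > t)\le C t^{-\gamma}L(t)$. Expanding
\begin{equation*}
\mathbb{E}_0\left[(\tilde\tau_j/n^{1/\gamma}\wedge 1)^2\right] = \int_0^1 2s\,\mathbb{P}_0(\tilde\tau_j > sn^{1/\gamma})\,ds,
\end{equation*}
splitting the integral at $s = n^{-1/\gamma}$ and using slow variation of $L$, one obtains a bound of order $Cn^{-1+\delta}$ for any $\delta >0$; this is essentially the truncation estimate \eqref{eqn:MeanTruncated} already used in the proof of Proposition~\ref{prop:MomentsSumHT}. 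Combining the three steps yields
\begin{equation*}
\sup_{|z\cdot\vec{\ell}|\le n^\varepsilon}\mathbb{E}_0\left[\left(\frac{\tau_z^- - \tau_z^\mathrm{ini}}{n^{1/\gamma}}\wedge 1\right)^2\right]\le C\, n^{2\varepsilon - 1 + \delta},
\end{equation*}
which has the required form $Cn^{-c}$ once $\varepsilon$ and $\delta$ are small enough (this is consistent with how the proposition is applied in Step 6 of Section~\ref{sect:Joint}). No substantial obstacle is expected; the only minor subtlety is that $\tilde\tau_1$ is distributed under $\mathbb{P}_0$ rather than under the conditional law, but both distributions satisfy the same stable tail estimate so the single-increment moment bound applies uniformly in $j$.
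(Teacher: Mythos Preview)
Your argument is correct and is in fact more direct than the paper's. You share with the paper the pathwise bound $\tau_z^{-}\le \tau_{c_1 n^\varepsilon}$ (which the paper also records), but from there the two routes diverge. The paper first passes from $\mathbb{P}_0$ to the conditional law $\mathbb{P}_0^K(\,\cdot\mid D\ge T_z)$ by translation invariance at the $K$-open point $X_{\tau_z^{\mathrm{ini}}}$, then manipulates the conditioning to reach $\mathbb{P}_0^K(\,\cdot\mid D=\infty)$ at the cost of a factor $n^\varepsilon$, and finally invokes the $L_{\mathrm{small}}$ decoupling machinery of Lemma~\ref{lemma:WhenCrossNotLarge} and Proposition~\ref{prop:KeyBoundClock} to bound the remaining $\mathbb{E}_0^K[(\tau_{n^\varepsilon}/n^{1/\gamma}\wedge 1)\mid D=\infty]$. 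Your route avoids all of this: working directly under $\mathbb{P}_0$ and using subadditivity plus the single-increment truncated moment $\mathbb{E}_0[(\tilde\tau_j/n^{1/\gamma}\wedge 1)^2]\le Cn^{-1+\delta}$ yields the clean exponent $n^{2\varepsilon-1+\delta}$ without touching Section~\ref{sect:Cross}. The paper's approach buys re-use of existing lemmas; yours buys a shorter, self-contained proof.

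One small correction on citations: your appeal to ``properties 4 and 5 of Definition~\ref{def:RegTimesWellDef}'' does not quite justify the tail of $\tilde\tau_1=\tau_1$ under $\mathbb{P}_0$. Property~4 concerns the box diameter $\chi_1$, not the time, and property~5 covers only $\{\tau_{i+1}-\tau_i\}_{i\ge 1}$, i.e.\ the increments under $\mathbb{P}_0^K(\,\cdot\mid D=\infty)$. The bound $\mathbb{P}_0(\tau_1>t)\le Ct^{-\gamma+\delta}$ is true but needs to be cited from \cite{Kious_Frib} directly. Alternatively, you can sidestep the issue by splitting $\tau_{C_1 n^\varepsilon}=\tau_1+(\tau_{C_1 n^\varepsilon}-\tau_1)$ and treating the second piece under the conditional law (where property~5 applies cleanly), so that only the single term $\tau_1$ under $\mathbb{P}_0$ remains --- and for that term the same tail estimate from \cite{Kious_Frib} suffices. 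This is a citation fix, not a gap.
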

\begin{proof}
    By the definition of $\tau_z^{-}$ and $\tau_z^\mathrm{ini}$ and using the translation invariance of the environment we obtain 
    \begin{equation*}
        \sup_{0 \le |z \cdot \vec{\ell}| \le n^\varepsilon}\mathbb{E}_{0}\left[ \left(\frac{(\tau_z^{-} - \tau_z^\mathrm{ini})}{n^{1/\gamma}} \wedge 1 \right)^2 \right] \le \sup_{0 \le |z \cdot \vec{\ell}| \le n^\varepsilon} \mathbb{E}^K_{0}\left[ \left(\frac{\tau_z^{-}}{n^{1/\gamma}} \wedge 1 \right)^2 \mid D \ge T_{z}\right].
    \end{equation*}
    Let us now show the proof for $|z \cdot \vec{\ell}| = n^\varepsilon$; all other values follow in the same way. We fix $t \ge 1$ arbitrary and show that
    \begin{equation}\label{eqn:ConclusionFirst}
        \mathbb{P}^K_{0}\left( \tau_z^{-} > t \mid D \ge T_{n^\varepsilon}\right) \le Cn^{\varepsilon} \mathbb{P}^K_{0}\left( \tau_1 > t \mid D = \infty\right);
    \end{equation}
    then the result follows from Proposition~\ref{prop:MomentsSumHT}. Let us now prove the above inequality. Using the definition of conditional probability
    \begin{align*}
        \mathbb{P}^K_{0}\left( \tau_z^{-} > t \mid D \ge T_{n^\varepsilon}\right) &= \frac{\mathbb{P}^K_{0}\left( \tau_z^{-} > t , D \ge T_{n^\varepsilon}\right)}{\mathbb{P}^K_{0}\left( D \ge T_{n^\varepsilon}\right)} \le \frac{\mathbb{P}^K_{0}\left( \tau_z^{-} > t , D \ge \tau_z^{-} \right)}{\mathbb{P}^K_{0}\left( D \ge T_{n^\varepsilon}\right)} \\
        & \le \frac{\mathbb{P}^K_{0}\left( \tau_z^{-} > t , D \ge \tau_z^{-} \right) \mathbb{P}^K_{0}\left( D \circ \theta_{\tau_z^-} =\infty\right)}{\mathbb{P}^K_{0}\left( D = \infty\right)^2}.
    \end{align*}
    To justify the last step we appeal to translation invariance and the fact that $\tau_z^-$ is constructed so that $X_{\tau_z^-}$ is $K$-open to obtain
    \begin{equation*}
        \mathbb{P}^K_{0}\left( D \circ \theta_{\tau_z^-} =\infty\right) = \mathbb{P}^K_{0}\left( D =\infty\right) = \rho>0.
    \end{equation*}
    Furthermore, we note that 
    \begin{equation*}
        \mathbb{P}^K_{0}\left( \tau_z^{-} > t , D \ge \tau_z^{-} \right) \mathbb{P}^K_{0}\left( D \circ \theta_{\tau_z^-} =\infty\right) = \mathbb{P}^K_{0}\left( \tau_z^{-} > t , D \ge \tau_z^{-} , D \circ \theta_{\tau_z^-} =\infty\right).
    \end{equation*}
    We observe that $\{D \ge \tau_z^{-} , D \circ \theta_{\tau_z^-} =\infty\} \subseteq \{D = \infty\}$, and, deterministically, there exists $c_1> 0$ such that $\tau_z^- \le \tau_{c_1 n^\varepsilon}$ as we know that $|z\cdot \vec{\ell}| \le n^\varepsilon$. Then, using this last observations and the fact that we have just showed that \eqref{eqn:ConclusionFirst} holds, we obtain 
    \begin{equation*}
        \sup_{0 \le |z \cdot \vec{\ell}| \le n^\varepsilon} \mathbb{E}^K_{0}\left[ \left(\frac{\tau_z^{-}}{n^{1/\gamma}} \wedge 1 \right)^2 \mid D \ge T_{z}\right] \le n^{\varepsilon} \mathbb{E}^K_{0}\left[ \left(\frac{\tau_{n^{\varepsilon}}}{n^{1/\gamma}} \wedge 1 \right) \mid D = \infty\right].
    \end{equation*}
    Using the same argument that we used in Lemma~\ref{lemma:WhenCrossNotLarge}, it is straightforward to show that $\{1, \dots, n^{\varepsilon}\} \subseteq L_{\text{small}}(n)$ with probability larger than $1 - Cn^{-1/8}$. The result then follows by the same argument given in Proposition~\ref{prop:KeyBoundClock}, we avoid repeating it here.
\end{proof}

\section{Proof of quenched limits}

The proof is the same as the one given in \cite[Section~6]{QuenchedBiasedRWRC}, for completeness and to keep this work self-contained we will give an overview of the main steps, but we will forego some details. 

We firstly set some notation, let $D^d([0, T])$ is the space of functions $f\colon [0, T]\to\mathbb{R}^d$ that are right continuous with left limits (c\`{a}dl\`{a}g). We will write $D([0, T])$ in place of $D^1([0, T])$. Similarly, let $\mathcal{C}^d([0, T])$ be the set of continuous functions. In what follows we will endow our spaces with the uniform topology $U$ and Skorohod's $J_1$ and $M_1$ topologies (see \cite[Chapter~11-13]{whitt}).

We recall that $v = \mathbb{E}_{0}[X_{\tau_1} |\, D = \infty]$ and that $v_0 = v/\|v\|$. Furthermore, let $\Sigma$ be the deterministic matrix introduced in \cite[(11.2)]{Kious_Frib}, that is the covariance matrix of the vector valued quantity $X_{\tau_1}$ under the measure $\mathbb{P}_{0}(\cdot  |\, D = \infty)$. We also set $I_d$ to be $d$-dimensional identity matrix and $P_{v_0}$ to be the projection matrix onto $v_0$. Then we set the notation
\begin{equation*}
    M_d \coloneqq C_{\infty}^{-\gamma/2}(I_d - P_{v_0})\sqrt{\Sigma}.
\end{equation*}
Note that the well-definedness of $\sqrt{\Sigma}$ was argued in \cite{Kious_Frib}.

\begin{proposition}\label{prop:AuxiliaryProcessConvergence}
Let $d \ge 2$, fix any $T > 0$, the followings holds for almost every environment $\omega \in \Omega$. The law of $(Y_n(t))_{0 \le t \le T}$ converges under $P^\omega_0(\cdot)$ in the $U$ topology on $D^d$ towards the distribution of $(v t)_{0 \le t \le T}$. Moreover, the law of $(Z_n(t), S_n(t))_{0 \le t \le T}$ under $P^\omega_0(\cdot)$ converges in $D^d([0, T]) \times D([0, T])$ in the $J_1 \times M_1$ topology as $n \to \infty$ towards the law of $(\sqrt{\Sigma} B_t, C_\infty \mathcal{S}_\gamma(t))_{0 \le t \le T}$, where $B$ is a standard Brownian motion and $\mathcal{S}_\gamma$ is a stable Subordinator of index $\gamma$ independent of $B$. 
\end{proposition}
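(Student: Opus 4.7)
The plan is to deduce the quenched convergence from the corresponding annealed convergence established in \cite{Kious_Frib} together with the variance bounds of Theorem~\ref{TheoremVarianceClockProcess}, following the Bolthausen-Sznitman-Mourrat template already carried out for $d\ge 5$ in \cite[Section~6]{QuenchedBiasedRWRC}. The first assertion, concerning $Y_n$, is immediate: by \cite[Lemma~11.2]{Kious_Frib} we have $Y_n(t)\to vt$ uniformly in $t\in[0,T]$ under $\mathbb{P}_0$, and since the limit is deterministic this automatically transfers to a quenched statement for $\mathbf{P}$-a.e.\ $\omega$.

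For the joint $(Z_n,S_n)$ convergence I would first argue along the geometric subsequence $n_k=\lfloor b^k\rfloor$, $b\in(1,2)$. Fixing a countable dense family $\{F_1^{(j)}\}$ of bounded Lipschitz functions on $(\mathbb{R}^{d+1})^m$ together with rational time grids $0\le t_1\le\dots\le t_m\le T$, Chebyshev's inequality combined with the summability \eqref{EquationBoundVarianceClock} and the Borel-Cantelli lemma yield, for $\mathbf{P}$-a.e.\ $\omega$,
\[
E_{0}^{\omega}\left[F_1^{(j)}(W^{*}_{b^n}(t_1),\dots,W^{*}_{b^n}(t_m))\right]-\mathbb{E}_{0}\left[F_1^{(j)}(W^{*}_{b^n}(t_1),\dots,W^{*}_{b^n}(t_m))\right]\xrightarrow[n\to\infty]{}0.
\]
Combined with annealed convergence of the right-hand side to the expectation of $F_1^{(j)}$ at $(\sqrt{\Sigma} B_{t_j},C_\infty\mathcal{S}_\gamma(t_j))_{j=1}^m$, and a standard intersection of null sets over $(j,m,\text{grid})$, this gives quenched convergence of finite-dimensional distributions of $W^*_{b^n}$ along the subsequence $b^n$. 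The analogous statement for the trajectory alone comes from \eqref{EquationBoundVarianceTraj}.

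Upgrading to process-level convergence in $D^d([0,T])\times D([0,T])$ under the $J_1\times M_1$ topology requires quenched tightness. For $Z_n$ the limit is continuous, so tightness in $J_1$ reduces to a modulus-of-continuity bound, which follows from \eqref{EquationBoundVarianceTraj} applied to appropriate Lipschitz moduli together with the annealed moment estimates of \cite{Kious_Frib}. For the monotone process $S_n$, whose limit is $C_\infty\mathcal{S}_\gamma$, $M_1$-tightness of monotone c\`adl\`ag processes follows from finite-dimensional tightness by standard criteria (see \cite[Chapter~12]{whitt}). Interpolation from $\{b^n\}$ to all integers is handled by Mourrat's argument: take $b$ arbitrarily close to $1$ and use the monotonicity of $S_n$ together with quenched tightness of $Z_n$ to bound the oscillations between consecutive subsequence indices.

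The most delicate point is that \eqref{EquationBoundVarianceClock} is indispensable rather than \eqref{EquationBoundVarianceTraj} alone: the jumps of the clock are produced by rare, heavy conductances encountered by the walk, so controlling \emph{joint} Lipschitz functionals of $(Z_n,S_n)$, and not only the marginal $Z_n$, is what guarantees that position and clock are coupled correctly in the quenched limit and that the limiting Brownian motion $B$ is independent of the subordinator $\mathcal{S}_\gamma$ as required. This is exactly why the decoupling step of Section~\ref{sect:Cross} is needed to sharpen the variance bound enough for the argument to go through in low dimensions.
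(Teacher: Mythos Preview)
Your proposal is correct and follows essentially the same Bolthausen--Sznitman--Mourrat template as the paper's proof. The only cosmetic differences are that the paper works with explicit Laplace functionals for $S_n^*$ and characteristic functionals for $W_n^*$ (rather than a generic countable dense family of Lipschitz test functions), and it makes the Slutsky step passing from $W_n^*$ back to $W_n$ explicit; otherwise the structure---annealed input from \cite{Kious_Frib}, variance summability from Theorem~\ref{TheoremVarianceClockProcess}, Chebyshev plus Borel--Cantelli along $b^k$, monotonicity for $M_1$ tightness of $S_n$, marginal-to-joint tightness via \cite[Theorem~11.6.7]{whitt}, and interpolation by sending $b\downarrow 1$---is the same.
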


\begin{proof}
    The statement on $(Y_n(t))_{0 \le t \le T}$ follows directly from \cite[(11.2)]{Kious_Frib}. 
    \paragraph{Marginal convergence of $S_{n}$ and $Z_n$.} We start by considering $S^*_n$. The tightness of the process in $M_1$ follows from finite dimensional distribution (f.d.d.) convergence as both $S_n$ and $\mathcal{S}_\gamma$ are almost surely non-decreasing (see \cite[Theorem~12.12.3]{whitt}). We focus on proving f.d.d.\ convergence. Observe that
    \begin{equation*}
        S_n(t) =  \frac{1}{\mathrm{Inv}(n)} \left(\tau_1 + S_n^*(t) - (\tau_{n+1} - \tau_n) \right).
    \end{equation*}
    We know that $\tau_1$ and $(\tau_{n+1} - \tau_n)$ are almost surely finite quantities, so by Slutsky's theorem we can reduce the problem to showing that $S^*_n$ converges in the f.d.d.\ sense towards $\mathcal{S}_\gamma$.

To do so we define the function
    \begin{equation*}
    F(S_n^{*}) = F(S_n^{*}(t_1),\cdots,S_n^{*}(t_m)) = \exp\left(-\lambda_1 S_n^{*}(t_1)- \lambda_2\left(S_n^{*}(t_2)- S_n^{*}(t_1)\right)\cdots-\lambda_m\left( S^{*}_n(t_m)-S_n^{*}(t_{m-1})\right)\right),
\end{equation*}
where $\lambda_1,\cdots,\lambda_m \in \mathbb{R}^m$ and $0 < t_1\le\cdots\le t_m \le T$. By \cite[Lemma 11.2]{Kious_Frib} we have annealed convergence, so that Laplace functionals converge, in particular
\begin{equation*}
    \mathbb{E}_{0}\left[F(S_n^*)\right] = \mathbf{E}\left[ E_{0}^\omega\left[ F(S_n^*) \right]\right] \underset{n \to +\infty}{\rightarrow} \mathbb{E}_{0}\left[ F(\mathcal{S}_\gamma) \right] = \exp\left( - t_1 \psi(\lambda_1) -  \dots - \psi(\lambda_{m})\left(t_m - t_{m-1}\right)  \right),
\end{equation*}
where $\psi$ is the Laplace exponent of the stable Subordinator. We observe that $F$ is a Lipschitz function of $W^*$ that satisfies the hypotheses of Theorem~\ref{TheoremVarianceClockProcess}, hence from \eqref{EquationBoundVarianceClock} we get that for any $b \in (1,2)$
\begin{equation*}
    \sum_{k} \mathbf{Var}\left(E_{0}^\omega \left[F(S_{b^k}^{*})\right]\right) < \infty,
\end{equation*}
which, by a straighforward application of Chebyshev's inequality and Borel-Cantelli lemma to $|E_{0}^\omega \left[F(S_{b^k}^{*})\right] - \mathbb{E}_{0} \left[F(S_{b^k}^{*})\right]|$ implies that $\mathbf{P}$-a.s.\
\begin{equation*}
    E_{0}^\omega \left[F(S_{b^k}^{*})\right] \to \mathbb{E}_{0} \left[F(S_{b^k}^{*})\right], \quad\quad \text{as }k \to \infty.
\end{equation*}
This convergence can be seen to hold almost surely for any $m \in \mathbb{N}$, any $\lambda_1, \dots, \lambda_m$ and $t_1, \dots, t_m$ and $b$ rationals on an event of full full $\mathbf{P}$ measure. It can be extended to any $\lambda_1, \dots, \lambda_m$ and $t_1, \dots, t_m$ and $b$ by monotonicity (see \cite[pg.~51]{QuenchedBiasedRWRC} for the details).

To extend the convergence to all $n \in \mathbb{N}$, one defines $k_n$ to be the largest integer such that $b^{k_n} \le n$. Using the arguments given in \cite[pg.~52]{QuenchedBiasedRWRC} one can show that
\begin{equation*}
    \limsup_{n \to \infty} \left| E_{0}^\omega \left[F(S_{n}^{*})\right] - E_{0}^\omega \left[F(S_{b^{n}}^{*})\right] \right| \le \mathbb{E}_{0} \left[ \max_{1 \le i \le m} \bigg(|\mathcal{S}_\gamma(b t_i) - \mathcal{S}_\gamma(t_i)|,|(b^{-\frac{1}{\gamma}}-1)\mathcal{S}_\gamma(t_i)| \bigg) \wedge 1 \right]. 
\end{equation*}
Note that as $b \to 0$ the r.h.s.\ goes to $0$ by the dominated convergence theorem and the right-continuity of $\mathcal{S}_\gamma$. This is sufficient to establish f.d.d.\ convergence of $S^*_n$ as we showed that $\mathbf{P}$-a.s.\
\begin{equation*}
    \lim_{n \to \infty} \left| E_{0}^\omega \left[F(S_{n}^{*})\right] - \mathbb{E}_{0}[F(\mathcal{S}_\gamma)] \right| = 0,
\end{equation*}
and the convergence of the Laplace functionals implies weak convergence. 

Before showing joint convergence let us comment on the marginal convergence of $Z_n$. This is nowadays a classical result which is given by \cite[Lemma~4.1]{Szn_serie} and the argument requires only \eqref{EquationBoundVarianceTraj} and annealed convergence \cite[Lemma~11.2]{Kious_Frib}.

\paragraph{Joint convergence.} As we observed before, using Slutsly's theorem it is enough to show convergence of $W^*_n$ towards the limit $W$. By the fact that both marginals $S_n$ and $Z_n$ converge (in the quenched sense) the joint process $(S_n, Z_n)$ is tight under $P^\omega_0$ for $\mathbf{P}$-a.e.\ $\omega \in \Omega$, see \cite[Theorem~11.6.7]{whitt} for a reference.

Let $\lambda_1, \dots, \lambda_m \in \mathbb{R}^{d+1}$ and $0 < t_1 < \dots < t_m \le T$ and define the function
\begin{equation}\label{EqnFourierFunctional}
    G(w) = \exp\Big( i \lambda_1 \cdot w(t_1)+i \lambda_2 \cdot \left( w(t_2) - w(t_1) \right) + \dots + i \lambda_m \cdot \left( w(t_m) - w(t_{m - 1}) \right) \Big).
\end{equation}
By the annealed convergence of \cite[Lemma 11.2]{Kious_Frib} we have that
\begin{equation*}
    \mathbb{E}_{0}\left[ G(W^{*}_n) \right] \underset{n \to +\infty}{\longrightarrow} \mathbb{E}_{0}\left[ G(W) \right].
\end{equation*}
We observe that the real part and the imaginary part of $G$ are two bounded and Lipschitz functions, and thus their positive and negative parts are bounded and Lipschitz functions. All these four functions satisfy the hypotheses of Theorem~\ref{TheoremVarianceClockProcess}, applying it we obtain that for $\mathbf{P}-$almost every environment $\omega \in \Omega$, for any $b \in (1, 2)$
\begin{align*}
    E^{\omega}_{0}[G(W^{*}_{b^{k}})] \underset{k \to +\infty}{\longrightarrow} \mathbb{E}_{0}[G(W)].
\end{align*}
The extension to a generic $n$ is done similarly as we did for $S^*_n$ and details can be found in \cite[pg.52]{QuenchedBiasedRWRC}
\end{proof}

We now give an overview of the proof of the main results, the same arguments are given in \cite{Kious_Frib} and \cite{QuenchedBiasedRWRC} for the proof of their respective main theorems.

\begin{proof}[Proof of Theorem~\ref{MainTheorem}] In all what follows we will fix $\omega \in \Omega$ such that the results of Proposition~\ref{prop:AuxiliaryProcessConvergence} hold. Moreover, all the convergences in distribution that we will show are under the quenched law $P_0^\omega$.

We will drop $[0, T]$ in the notations $D^d([0, T])$ and write $D^d$, all other spaces considered will get the same notation. Let $D_{\uparrow}$ and $D_{\uparrow\uparrow}$ be, respectively, the sets of non-decreasing and strictly increasing c\`{a}dl\`{a}g functions. For $x \in D_{\uparrow}$ we let $x^{-1}$ denote the usual right-continuous inverse. We know that $\mathcal{S}_\gamma$ is a strictly increasing, pure-jump process, thus $\mathcal{S}_\gamma^{-1}$ is non-deacreasing and almost surely continuous. We get that
\begin{equation}\label{eqn:firstEquationLastProof} 
    \left(Y_{n^\gamma/L(n)}\left( S_{n^\gamma/L(n)}^{-1}\left(\frac{nt}{\mathrm{Inv}\left(n^\gamma/L(n)\right)}\right) \right) \right)_{t \in [0, T]} \to \left( v C_\infty^{-\gamma} \mathcal{S}_\gamma^{-1}(t) \right)_{t \in [0, T]},
\end{equation}
in the uniform topology. This holds because of the following reasons:
\begin{enumerate}
    \item The deterministic process $(nt/\mathrm{Inv}(n^\gamma/L(n)))_{t \in [0, T]}$ converges uniformly to $(t)_{t \in [0, T]}$.
    \item The right continuous inverse is continuous from $(D_{u, \uparrow\uparrow}, M_1)$ to $(\mathcal{C}, U)$ by \cite[Corollary~13.6.4]{whitt}.
    \item If $(x_n, y_n) \to (x, y)$ in $D^d \times D_{\uparrow}$ with $(x, y) \in \mathcal{C}^d \times \mathcal{C}_{\uparrow}$, then $x_n \circ y_n \to x \circ y$ in the $U$ topology (see \cite[Theorem 13.2.1]{whitt})
\end{enumerate}
Moreover, applying the same results, it holds that
\begin{equation}\label{eqn:secondEquationLastProof} 
    \left(Z_{n^\gamma/L(n)}\left( S_{n^\gamma/L(n)}^{-1}\left(\frac{nt}{\mathrm{Inv}\left(n^\gamma/L(n)\right)}\right) \right) \right)_{t \in [0, T]} \stackrel{(\mathrm{d})}{\to} \left( C_\infty^{-\gamma/2} B_{\mathcal{S}_\gamma^{-1}(t)} \sqrt{\Sigma} \right)_{t \in [0, T]},
\end{equation}
in the $J_1$ topology. Furthermore, we claim that $\mathbf{P}$-a.s.\ with very high $P^\omega_0$ probability we have that
\begin{equation}\label{eqn:DifferenceLast}
    \max_{t \in [0, T]} \left \| X_{\tau_{\left \lfloor \frac{n^\gamma}{L(n)} S_{n^\gamma/L(n)}^{-1}(nt/\mathrm{Inv}(n^\gamma/L(n)))\right\rfloor}} - X_{\floor{nt}} \right\|_{\infty} \le \delta n^{\gamma/4}
\end{equation}
Indeed, this is true because:
\begin{itemize}
    \item The regeneration time $\tau_{\left \lfloor \frac{n^\gamma}{L(n)} S_{n^\gamma/L(n)}^{-1}(nt/\mathrm{Inv}(n^\gamma/L(n)))\right\rfloor}$ is the first $\tau_k$ such that $\tau_k > n$.
    \item Then the quantity on the l.h.s.\ of \eqref{eqn:DifferenceLast} is bounded above that the largest $\|\chi_{k}\|$ for $k = 1, \dots, n$.
\end{itemize}
Then the claim follows from Lemma~\ref{lemma:GoodEventsBoxes}, Markov's inequality and the Borel-Cantelli lemma.

Equations \eqref{eqn:firstEquationLastProof}, \eqref{eqn:secondEquationLastProof} and \eqref{eqn:DifferenceLast} and Slutsky's theorem imply \eqref{FirstStatementMain}. Finally, they also imply that $\mathbf{P}$-a.s., under $P^\omega_0$
\begin{equation}\label{EqnProofMainTheo}
    \left(\frac{X_{\floor{nt}} - v \frac{n^\gamma}{L(n)} S_{n^\gamma/L(n)}^{-1}\left(\frac{nt}{\mathrm{Inv}\left(n^\gamma/L(n)\right)}\right)}{\sqrt{n^\gamma/L(n)}} \right)_{t \in [0, T]} \stackrel{(\mathrm{d})}{\to} \left( C_\infty^{-\gamma/2} B_{\mathcal{S}_\gamma^{-1}(t)} \sqrt{\Sigma} \right)_{t \in [0, T]}.
\end{equation}
To get \eqref{SecondStatementMain} we apply $(I_d - P_{v_0})$ to \eqref{EqnProofMainTheo}. Linear transofmations preserve $J_1$-convergence if the limit is continuous.
\end{proof}

\appendix

\section{Basic facts}
The results in this section are mainly technical and fairly basic, but since we could not find clear references, we include the proofs for the reader's convenience. 
\begin{lemma}\label{lemma:PolynomialBound}
    Let $(X_i)_{i = 1}^n$ be a family of independent random variables such that $\mathbb{E}[X_1] = 0$ and $\mathbb{E}[X^{2M}_1]<\infty$ for some $M \in \mathbb{N}$. Then there exists some constant $C>0$ (which depends on the distribution of the $X_i$) such that, for each $\lambda>0$,
    \begin{equation*}
        \mathbb{P}\left( \sum_{i = 1}^n X_i > \lambda\right) \le C \frac{n^M}{\lambda^{2M}}.
    \end{equation*}
\end{lemma}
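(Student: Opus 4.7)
The plan is to apply Markov's inequality at order $2M$ and then estimate the $2M$-th moment of the sum by a multinomial expansion, exploiting independence and the mean-zero assumption to eliminate most terms. More precisely, I would start by writing
\begin{equation*}
    \mathbb{P}\left(\sum_{i=1}^n X_i > \lambda\right) \le \mathbb{P}\left(\left|\sum_{i=1}^n X_i\right| > \lambda\right) \le \frac{1}{\lambda^{2M}}\,\mathbb{E}\!\left[\left(\sum_{i=1}^n X_i\right)^{2M}\right],
\end{equation*}
so it suffices to show that the $2M$-th moment is at most $C\, n^M$ for some constant $C$ depending only on the law of $X_1$.

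To bound the moment, I would expand multinomially as
\begin{equation*}
    \mathbb{E}\!\left[\left(\sum_{i=1}^n X_i\right)^{2M}\right]
    = \sum_{(i_1,\dots,i_{2M}) \in \{1,\dots,n\}^{2M}} \mathbb{E}[X_{i_1}\cdots X_{i_{2M}}].
\end{equation*}
By independence, each summand factorises as a product over the distinct indices appearing in the tuple. Since $\mathbb{E}[X_1]=0$, any tuple containing an index that appears exactly once contributes zero. Hence only tuples in which every distinct index appears at least twice survive; such tuples use at most $M$ distinct indices out of $\{1,\dots,n\}$, so their number is bounded by $\binom{n}{M}\cdot M^{2M} \le C_1\, n^M$ for a combinatorial constant $C_1$ depending only on $M$.

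For each surviving tuple, with multiplicities $a_1+\cdots+a_k=2M$ and $k\le M$, Hölder's inequality gives
\begin{equation*}
    \bigl|\mathbb{E}[X_{j_1}^{a_1}\cdots X_{j_k}^{a_k}]\bigr|
    = \prod_{\ell=1}^{k}\mathbb{E}[|X_1|^{a_\ell}]
    \le \prod_{\ell=1}^{k}\mathbb{E}[|X_1|^{2M}]^{a_\ell/(2M)}
    = \mathbb{E}[|X_1|^{2M}],
\end{equation*}
which is finite by hypothesis. Combining the two estimates yields
\begin{equation*}
    \mathbb{E}\!\left[\left(\sum_{i=1}^n X_i\right)^{2M}\right] \le C_1\, \mathbb{E}[|X_1|^{2M}]\, n^M,
\end{equation*}
and plugging this into the Markov estimate gives the claim with $C = C_1\,\mathbb{E}[|X_1|^{2M}]$. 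There is no real obstacle here; the only point that requires a line of care is the combinatorial counting of tuples in which every appearing index appears at least twice, but the simple bound $\binom{n}{M}M^{2M}$ is amply sufficient for the polynomial-in-$n$ rate claimed.
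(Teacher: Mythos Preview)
Your proposal is correct and follows essentially the same approach as the paper: apply Markov's inequality at order $2M$, expand the moment multinomially, use independence and the mean-zero assumption to kill any tuple in which some index appears exactly once, and bound the surviving tuples (those with at most $M$ distinct indices) by a combinatorial factor of order $n^M$ times the finite $2M$-th moment. The only cosmetic differences are that you package the counting in a single bound $\binom{n}{M}M^{2M}$ and make the moment bound explicit via H\"older, whereas the paper stratifies by the exact number of distinct indices; the substance is identical.
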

\begin{proof}
We write
\[\mathbb{E}\left[\left(\sum_{i = 1}^n X_i\right)^{2M}\right]=\sum_{i_1,\dots,i_{2M}=1}^{n}\mathbb{E}[X_{i_1}\cdots X_{i_{2M}}].\]
For $(i_1,\dots,i_{2M})\in \{1,\dots,n\}^{2M}$, we denote by $N=N(i_1,\dots,i_{2M})$ the number of distinct values in the $2M$-dimensional vector $(i_1,\dots,i_{2M})$. Then we split
\begin{equation}\label{eqn:PowerSum}
    \sum_{i_1,\dots,i_{2M}=1}^{n}\mathbb{E}[X_{i_1}\cdots X_{i_{2M}}]=\sum_{j=1}^{2M}\sum_{i_1,\dots,i_{2M}=1}^{n}\mathbb{E}[X_{i_1}\cdots X_{i_{2M}}]I_j,
\end{equation}
where $I_j=I_j(i_1,\dots,i_{2M})$ equals $1$ if $N=j$ and is zero otherwise. Note that when $j=1$ we have $n$ ways of choosing the specific value $h\in \{i_1,\dots,i_{2M}\}$ for which $i_1=\dots =i_{2M}=h$ and 
\[\mathbb{E}[X_{i_1}\cdots X_{i_{2M}}]=\mathbb{E}[X^{2M}_1];\]
whence 
\[\sum_{i_1,\dots,i_{2M}=1}^{n}\mathbb{E}[X_{i_1}\cdots X_{i_{2M}}]I_1=n\mathbb{E}[X^{2M}_1]\leq Cn.\]
For $j=2$, we have $n(n-1)\leq n^2$ ways of choosing the two values appearing in the sequence $(i_1,\dots,i_{2M})$, and using the assumption involving the $2M$-th moment of the sum $\sum_{i=1}^nX_i$ we see that
\[\sum_{i_1,\dots,i_{2M}=1}^{n}\mathbb{E}[X_{i_1}\cdots X_{i_{2M}}]I_2\leq Cn^2.\]
In general, if $j\leq M$, then there are at most $n^j\leq n^M$ ways of choosing the $j$ distinct values appearing in the vector, so that in the end
\[\sum_{i_1,\dots,i_{2M}=1}^{n}\mathbb{E}[X_{i_1}\cdots X_{i_{2M}}]I_j\leq Cn^j\leq Cn^M.\]
Now the key point is that, if $N> M$, then there exists \textit{at least one} value in the sequence $(i_1,\dots,i_{2M})$ which appears only once (because otherwise we would have $2N>2M$ distinct values in $(i_1,\dots,i_{2M})$, which is not possible). But then, since the $X_i$ are independent and have mean zero, it follows that (for $j>M$) $\mathbb{E}[X_{i_1}\cdots X_{i_{2M}}]I_j=0$. All in all, we arrive at
\[\sum_{j=1}^{2M}\sum_{i_1,\dots,i_{2M}=1}^{n}\mathbb{E}[X_{i_1}\cdots X_{i_{2M}}]I_j=\sum_{j=1}^{M}\sum_{i_1,\dots,i_{2M}=1}^{n}\mathbb{E}[X_{i_1}\cdots X_{i_{2M}}]I_j\leq C'n^M\]
for some constant $C'$ which depends on $M$ as well as the distribution of $X_1$. The result then follows from Markov's inequality.
\end{proof}

We also need some control on certain moments of truncated heavy-tailed random variables.

\begin{proposition}\label{prop:MomentsSumHT}
Let $X_1, X_2, \dots$ be i.i.d.\ random variables with regularly tail of index $\gamma \in (0, 1)$ (let $L(\cdot)$ be the associated slowly varying function). Then, for all $\varepsilon>0$, for all $p \in \mathbb{N}$ there exists $C>0$ such that
\begin{equation*}
    E\left[\left(\sum_{j =1}^n\frac{X_j}{n^{1/\gamma}} \wedge 1\right)^p\right] \le C n^{\varepsilon p}.
\end{equation*}
\end{proposition}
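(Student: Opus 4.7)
The plan is to reduce the problem to a one-variable Karamata computation and then expand the $p$-th moment combinatorially. I read the statement as concerning $\sum_{j=1}^n Y_j$ with $Y_j \coloneqq (X_j/n^{1/\gamma}) \wedge 1$, as this is how the proposition is invoked (e.g.\ in \eqref{eqn:CSBound}); the $Y_j$ are then i.i.d.\ and take values in $[0,1]$. Note that the alternative parsing $(\sum_j X_j/n^{1/\gamma}) \wedge 1$ would make the statement trivial since the quantity is bounded by $1$.

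First I would compute $E[Y_1]$, which is the intermediate step labelled \eqref{eqn:MeanTruncated} in the paper. By the layer-cake identity
\[
n^{1/\gamma} E[Y_1] = E[X_1 \wedge n^{1/\gamma}] = \int_0^{n^{1/\gamma}} P(X_1 > s)\, ds = \int_0^{n^{1/\gamma}} L(s)\, s^{-\gamma}\, ds,
\]
and since $\gamma \in (0,1)$, Karamata's theorem yields the asymptotic $\int_0^t L(s) s^{-\gamma} ds \sim L(t)\, t^{1-\gamma}/(1-\gamma)$ as $t \to \infty$ (the contribution near $0$ being trivially controlled by $P(X_1 > s) \le 1$). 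Substituting $t = n^{1/\gamma}$ gives $E[Y_1] \le C\, L(n^{1/\gamma})/n$, and slow variation of $L$ then gives $n E[Y_1] \le C\, L(n^{1/\gamma}) \le C n^{\varepsilon}$ for all $n$ large enough.

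Next I would expand
\[
E\Big[\Big(\sum_{j=1}^n Y_j\Big)^p\Big] = \sum_{(i_1, \ldots, i_p) \in [n]^p} E[Y_{i_1} \cdots Y_{i_p}]
\]
and group tuples by the number $q \in \{1,\dots,p\}$ of distinct indices. By independence each expectation factors as $\prod_k E[Y_1^{s_k}]$ over block sizes $s_k \ge 1$ summing to $p$; since $Y_1 \in [0,1]$ we have $Y_1^{s_k} \le Y_1$, so each factor is bounded by $E[Y_1]$. The number of tuples with exactly $q$ distinct values is at most $p^p \, n^q$, hence
\[
E\Big[\Big(\sum_{j=1}^n Y_j\Big)^p\Big] \le C_p \sum_{q=1}^p (n E[Y_1])^q \le C_p\, (1 + n E[Y_1])^p \le C' n^{\varepsilon p}
\]
for all sufficiently large $n$, with finitely many small $n$ absorbed into the constant via the trivial bound $\sum_j Y_j \le n$. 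The main technicality is just being careful in applying Karamata near $0$; the essential trick is the $[0,1]$-valuedness of $Y_1$, which collapses $E[Y_1^{s_k}]$ to $E[Y_1]$ and reduces everything to polynomial powers of $nE[Y_1] \le Cn^\varepsilon$.
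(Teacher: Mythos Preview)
Your proof is correct and follows essentially the same strategy as the paper: compute the truncated first moment (this is exactly \eqref{eqn:MeanTruncated}), then expand the $p$-th power combinatorially and group by the number $q$ of distinct indices, bounding each group by $(nE[Y_1])^q\le Cn^{\varepsilon q}$. The one difference is in how the higher moments $E[Y_1^{k}]$ are handled: the paper bounds them directly by observing that $X_1^k$ is regularly varying of index $\gamma/k$ and repeating the layer-cake computation, whereas you use the shortcut $Y_1\in[0,1]\Rightarrow Y_1^{k}\le Y_1$ to reduce everything to the single bound on $E[Y_1]$. Your route is slightly more economical and avoids the extra Karamata-type computation for each power; the paper's route has the minor advantage of giving the sharper intermediate information $E[(X_1\wedge n^{1/\gamma})^k]\le Cn^{k(1+\varepsilon')/\gamma-1}$, which is not needed here but is the kind of estimate one would want if the truncation level and the normalisation differed (as in Lemma~\ref{lemma:MomentsSumHT2}).
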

\begin{proof}
Using our hypothesis
\begin{align*}
    E\left[X_{1} \mathds{1}_{\{X_{1} \le n^{1/\gamma}\}} \right] 
    &= \int_{0}^\infty P\left( X_{1} \mathds{1}_{\{X_{1} \le n^{1/\gamma}\}} > t\right) dt \\
    & \le C \int_{1}^{n^{1/\gamma}} t^{-\gamma + \varepsilon'} dt \le C n^{\frac{1+\varepsilon'}{\gamma} - 1 }.
\end{align*}
This holds for all $\varepsilon'>0$, as for all $\varepsilon'>0$ and $t$ large enough $L(t) \le t^{\varepsilon'}$. Furthermore
\begin{equation*}
    E\left[\left(X_1 \wedge n^{1/\gamma}\right)\mathds{1}_{\{X_{1} > n^{1/\gamma}\}}\right] = n^{\frac{1}{\gamma}} \cdot P(X_1>n^{1/\gamma}) \le C n^{\frac{1+\varepsilon'}{\gamma} - 1 } 
\end{equation*}
Hence, letting $\varepsilon = 2\varepsilon'/\gamma$ we obtain
\begin{equation}\label{eqn:MeanTruncated}
    E\left[ \left(\frac{X_j}{n^{1/\gamma}} \wedge 1\right) \right] \le C  n^{-1/\gamma} n^{\frac{1}{\gamma} - 1 +\varepsilon} = C n^{-1 + \varepsilon}.
\end{equation}
Summing over $j=1, \dots, n$ gets us the result for $p = 1$. For $p>1$, we firstly notice that for all $k \in \mathbb{N}$, $X_j^k$ is regularly varying of index $\gamma/k$, hence one gets that
\begin{equation*}
    E\left[\left(X_1 \wedge n^{1/\gamma}\right)^k\right] = E\left[\left(X_1^k \wedge n^{k/\gamma}\right)\right] \le C n^{k\frac{1+\varepsilon'}{\gamma} - 1 }.
\end{equation*}
Which in turn implies
\begin{equation*}
    E\left[ \left(\frac{X_j}{n^{1/\gamma}} \wedge 1\right)^k \right] \le C  n^{-k/\gamma} n^{\frac{k}{\gamma} - 1 +\varepsilon} = C n^{-1 + \varepsilon}.
\end{equation*}
Now, we can always write the expectation of the power of the sum as in \eqref{eqn:PowerSum}. Using the notation employed there and the last step we get that every term is of the form 
\begin{equation*}
    E[X_{i_1}\cdots X_{i_{p}}]I_j \le n^{-j + j\varepsilon},
\end{equation*}
recall that $j$ here is the number of different indices $i_1, \dots, i_{p}$ chosen. Moreover, there is at most $n^j$ ways to choose such indices hence
\begin{equation*}
    \sum_{i_1,\dots,i_{p}=1}^{n}\mathbb{E}[X_{i_1}\cdots X_{i_{p}}]I_j\leq Cn^j n^{-j + j\varepsilon}\leq Cn^{\varepsilon p}.
\end{equation*}
Finally
\begin{align*}
    \mathbb{E}\left[\left(\sum_{i = 1}^n X_i\right)^{p}\right]=\sum_{j=1}^{p}\sum_{i_1,\dots,i_{p}=1}^{n}\mathbb{E}[X_{i_1}\cdots X_{i_{p}}]I_j \le C p n^{\varepsilon p},
\end{align*}
this concludes the proof.
\end{proof}
\begin{lemma}\label{lemma:MomentsSumHT2}
Let $X_1, X_2, \dots$ be i.i.d.\ random variables with regularly tail of index $\gamma \in (0, 1)$ (let $L(\cdot)$ be the associated slowly varying function). Then, for all $p \in (0, 1)$ and all $\varepsilon>0$, there exists $C>0$ such that
\begin{equation*}
    E\left[\left(\sum_{j =1}^n\left(\frac{X_j}{n^{1/\gamma}} \wedge n^{-p\frac{1}{\gamma}}\right)\right)^2 \right] \le C n^{-p(\frac{1}{\gamma} - 1)+\varepsilon}.
\end{equation*}
\end{lemma}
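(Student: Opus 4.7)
The plan is to expand the square of the i.i.d.\ sum, reduce to first and second truncated moments of $X_1$, and then exploit the heavy tail via the standard slow-variation bound $L(s) \le C_{\varepsilon'} s^{\varepsilon'}$, valid for any $\varepsilon'>0$ and $s$ large.

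First I would rewrite the summands as $Y_j \coloneqq (X_j/n^{1/\gamma}) \wedge n^{-p/\gamma} = (X_j \wedge t_n)/n^{1/\gamma}$, where $t_n \coloneqq n^{(1-p)/\gamma}$. Since the $Y_j$ are i.i.d.\ and non-negative, expanding the square gives
\begin{equation*}
E\Big[\Big(\sum_{j=1}^n Y_j\Big)^2\Big] = n\, E[Y_1^2] + n(n-1)(E[Y_1])^2 \le n\, E[Y_1^2] + n^2 (E[Y_1])^2,
\end{equation*}
so it suffices to control each summand on the right separately.

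Next, applying the layer-cake identity $E[Z^k] = k\int_0^\infty s^{k-1} P(Z>s)\,ds$ to $Z = X_1 \wedge t_n$, together with $P(X_1>s) = L(s) s^{-\gamma} \le C_{\varepsilon'} s^{-\gamma + \varepsilon'}$ for $s \ge 1$ and the fact that $\gamma < 1 < 2$, a routine integration of $\int_1^{t_n} s^{k-1-\gamma+\varepsilon'}\,ds$ yields
\begin{equation*}
E[(X_1 \wedge t_n)^k] \le C\, t_n^{k-\gamma+\varepsilon'}, \qquad k \in \{1,2\}.
\end{equation*}
Substituting $t_n = n^{(1-p)/\gamma}$ and simplifying the exponents, one computes
\begin{equation*}
n^2 (E[Y_1])^2 \le C\, n^{-2p(1/\gamma-1)+\varepsilon''}, \qquad n\, E[Y_1^2] \le C\, n^{-p(2/\gamma-1)+\varepsilon''}.
\end{equation*}

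Since $p \in (0,1)$ and $1/\gamma > 1$, both $2p(1/\gamma-1)$ and $p(2/\gamma-1) = p(1/\gamma-1) + p/\gamma$ are strictly larger than $p(1/\gamma-1)$, so both contributions are dominated by $C\,n^{-p(1/\gamma-1)+\varepsilon}$ after relabelling $\varepsilon$, which is the desired bound. There is no real conceptual obstacle beyond carefully bookkeeping the exponents and absorbing the slow variation $L$ into an arbitrary $\varepsilon'$; in fact the bound is loose, since the mean-squared term alone delivers the stronger rate $n^{-2p(1/\gamma-1)+\varepsilon}$, but the weaker form stated is what is convenient for the application in the proof of Proposition~\ref{prop:KeyBoundClock}.
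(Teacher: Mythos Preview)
Your proof is correct and follows essentially the same route as the paper's: expand the square of the i.i.d.\ sum into the diagonal term $n\,E[Y_1^2]$ and the off-diagonal term $n^2(E[Y_1])^2$, bound each truncated moment via $E[(X_1\wedge t_n)^k]\le C\,t_n^{k-\gamma+\varepsilon'}$, and substitute $t_n=n^{(1-p)/\gamma}$. Your exponent bookkeeping is in fact cleaner than the paper's (which carries what looks like a harmless typo in the second-moment exponent), and your closing remark that the argument actually delivers the stronger rate $n^{-2p(1/\gamma-1)+\varepsilon}$ is correct.
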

\begin{proof}
The proof is very similar to the previous proposition, let us give the main steps anyway, we will forego some details. We start by computing
\begin{equation*}
    E\left[\left(X_j \wedge n^{(1-p)\frac{1}{\gamma}}\right) \right] \le Cn^{-(1-p) + (1-p)\frac{1}{\gamma} + \varepsilon}.
\end{equation*}
Similarly,
\begin{equation*}
    E\left[\left(X_j \wedge n^{(1-p)\frac{1}{\gamma}}\right)^2 \right] \le Cn^{-2(1-p) + 2(1-p)\frac{1}{\gamma} + \varepsilon}.
\end{equation*}
Then we get that
\begin{align*}
    E\left[\left(\sum_{j =1}^n\left(\frac{X_j}{n^{1/\gamma}} \wedge n^{-p\frac{1}{\gamma}}\right)\right)^2 \right] &\le E\left[\sum_{j =1}^n\left(\frac{X_j}{n^{1/\gamma}} \wedge n^{-p\frac{1}{\gamma}}\right)^2 \right] + n^2 E\left[\left(\frac{X_1}{n^{1/\gamma}} \wedge n^{-p\frac{1}{\gamma}}\right) \right]^2\\
    &\le Cn^{1 - \frac{2}{\gamma}-2(1-p) + 2(1-p)\frac{1}{\gamma} + 2\varepsilon} + Cn^{2 - \frac{2}{\gamma}-2(1-p) + 2(1-p)\frac{1}{\gamma} + \varepsilon}\\
    &\le Cn^{-p(\frac{1}{\gamma} - 1)+\varepsilon}.
\end{align*}
\end{proof}

\section{Regeneration events}

For completeness we give an overview of the definitions of the random variables $D$ and $\mathcal{D}^\bullet$. See \cite[Section~3]{QuenchedBiasedRWRC} for an extensive discussion on their construction.

\subsection{Enhanced walks}

For any environment $\omega$ we define the modification of the quenched law
\begin{equation}\label{eqn:KtransitionsEnhanced}
   p_K^{\omega}(x,y) \coloneqq \frac{(c_{*}(x,y)\wedge K^{-1})e^{(x+y)\cdot \ell}}{\displaystyle\sum_{ z \sim x}{}{(c_{*}(x,z)\lor K)e^{(x+z)\cdot \ell}}},
\end{equation}
and $0$ if $x$ and $y$ are not adjacent. Observe that, for any $x,y \in \mathbb{Z}^d$, we have that $p^{\omega}_{K}(x,y) \le p^{\omega}(x,y)$ (and the probabilities $\{p^{\omega}_{K}(x,y)\}_{y \sim x}$ do not necessarily sum up to $1$). Moreover, the family $\{p^{\omega}_{K}(x,y)\}_{y \sim x}$ is deterministic if the point $x$ is $K$-open.

For any $(x,z) \in \mathbb{Z}^d \times \{0,1\}$ we define the Markov chain $(\tilde{X}_{n})_{n \geq 0} = (X_{n}, Z_{n})_{n \geq 0}$ to have transitions probabilities $p^{\omega}((x_1,z_1),(x_2,z_2))$ defined by
\begin{align*}
    &1. \,\, \tilde{X}_{0} = (x,z), \quad P^{\omega}_{(x,z)}\text{-a.s},\\
    &2.\,\, p^{\omega}((x,z),(y,1)) = p_K^{\omega}(x,y),\\
    &3.\,\, p^{\omega}((x,z),(y,0)) = p^{\omega}(x,y)  - p_K^{\omega}(x,y),
\end{align*}
and denote its quenched law $P^\omega_{(x,z)}$. This definition ensures that the first coordinate has the same law as our original RWRC $(X_n)_n$ in the environment $\omega$. The annealed law of the enhanced walk and the law of two enhanced walks evolving in the same environment or in independent environments are constructed in the obvious way.

\subsection{Regeneration events}

\paragraph{Single regeneration.} Let $\tilde{X}_n = (X_n, Z_n), n \in \mathbb{N}_0$ be an enhanced walk, we define
\begin{align*}
    & \mathbf{BACK} \coloneqq \inf \left \{ n \ge 1 \colon X_n \cdot \vec{\ell} \le X_0 \cdot \vec{\ell }\right \},\\
    & \mathbf{ORI} \coloneqq \inf \left\{ n \ge 1 \colon X_{n-1} \in \mathcal{V}_{X_0} \text{ and } Z_n = 0\right\},
\end{align*}
and consequently 
\begin{equation}\label{eqn:DSingleRegeneration}
    D \coloneqq \mathbf{BACK} \wedge \mathbf{ORI}.
\end{equation}

\paragraph{Joint regeneration.} Let $\tilde{X}^{(1)}_n, \tilde{X}^{(2)}_n, n \in \mathbb{N}_0$ be two enhanced walks evolving in the same environment. We introduce the random variables, for $i \in \{1,2\}$
\begin{align*}
    &\mathbf{BACK}^{(i)}_{\le R} \coloneqq \inf \left \{ 0  < n < T^{(i)}_{\mathcal{H}^{+}_R} \colon X^{(i)}_n \cdot \vec{\ell} \leq X^{(i)}_0 \cdot \vec{\ell} \right\},\\ 
    & \mathbf{ORI}^{(i)}_{\le R} \coloneqq \inf \left \{ 0  < n < T^{(i)}_{\mathcal{H}^{+}_R} \colon X^{(i)}_{n-1} \in \mathcal{V}_{X^{(1)}_0} \cup \mathcal{V}_{X^{(2)}_0} \text{ and } Z^{(i)}_n = 0, \text{ or } X^{(i)}_{n} \in \mathcal{V}_{X^{(1)}_{0} - e_1} \cup \mathcal{V}_{X^{(2)}_{0} - e_1}  \right\}. 
\end{align*}
Moreover,
\begin{equation*}\label{definition_Di}
    \mathcal{D}^{\bullet {(i)}}_{\le R}=\mathbf{BACK}^{(i)}_{\le R} \wedge \mathbf{ORI}^{(i)}_{\le R}\quad \text{and} \quad \mathcal{D}^{\bullet i} \coloneqq \lim_{R \to +\infty} \mathcal{D}^{\bullet i}_{\le R}.
\end{equation*}
Hence, we can set
\begin{equation}\label{definition_D}
    \mathcal{D}^{\bullet} \coloneqq \mathcal{D}^{\bullet (1)} \wedge \mathcal{D}^{\bullet (2)}.
\end{equation}
We also define 
\begin{align}\label{definition_M_other}
    M^{i} \coloneqq \inf\left\{R \in \mathbb{R} \colon \mathcal{D}^{\bullet i}_{\le R} < +\infty \right\} \text{ and } M \coloneqq M^1 \wedge M^2.
\end{align} 
One can check that $\{M = \infty\} = \{\mathcal{D}^\bullet = \infty\}$.

\paragraph{Acknowledgments} The authors are thankful to Tanguy Lions for several useful discussions at various stages of this project. CS thanks Noam Berger for valuable insights on this topic.

\printbibliography
\end{document}